\declaretheorem[name=Theorem,numberwithin=section]{thm}
\declaretheorem[name=Lemma,sibling=thm]{lemma}
\declaretheorem[name=Proposition,sibling=thm]{prop}
\declaretheorem[name=Corollary,sibling=thm]{cor}
\declaretheorem[name=Example,style=remark]{exmp}
\setlist[enumerate]{itemsep=0mm}
\title[Limit Theorems Regarding Products of Random Matrices I]{Some Limit Theorems Regarding Products of Random Matrices I: Directional Derivative of the Lyapunov Exponent}
\author{Fan Wang}
\begin{document}
\maketitle
\begin{abstract}
Given an i.i.d. sequence $\{A_n(\omega)\}_{n\ge1}$ of invertible matrices and a random matrix $B(\omega)$, we consider the random matrix sequences inductively defined by $S_{n}(\omega) = A_n(\omega)S_{n-1}(\omega)$ and $T_{n}(\omega) = B(\sigma^{n-1}\omega)S_{n-1}(\omega) + A_n(\omega)T_{n-1}(\omega)$, and study several limit theorems involving $T_{n}(\omega)$ as well as the asymptotic behaviour of the action of $T_{n}(\omega)$ on the projective space and on the unit circle.
\end{abstract}

\section{Introduction}
\label{sec:intro}

\subsection{Background}
Let $(\Omega,\mathcal{A},\mathbb{P},\sigma)$ be a probability space on which we can define an i.i.d. sequence $\{A_n(\omega)\}_{n\ge 1}$ taking values in $d\times d$ real invertible matrices following a probability measure $\mu$. Here we assume the measure-preserving map $\sigma$ is invertible. In particular, one can take $\Omega = GL(d,\mathbb{R})^{\mathbb{Z}}$, let $A(\omega)=\omega_0$ and $A_n(\omega) = A(\sigma^{n-1} \omega)$, where $\sigma:\Omega \to\Omega$ is the shift operator satisfying $(\sigma\omega)_i = \omega_{i+1}$ for $i \in\mathbb{Z}$.

Define 
\[
S_n(\omega) := A_n(\omega)A_{n-1}(\omega)\cdots A_1(\omega).
\]
It has been of great interest studying the following limits 
\begin{align}
&\lim_{n\to\infty}\dfrac{1}{n}\mathbb{E}[\log\|S_n(\omega)\|],\label{lim:le}\\
&\lim_{n\to\infty}\dfrac{1}{n}\log\|S_n(\omega)\|, \label{lim:norm} \\
&\lim_{n\to\infty}\dfrac{1}{n}\log\|S_n(\omega)x\|, \quad (0\not = x\in\mathbb{R}^d) \label{lim:x}
\end{align}
where $\|\cdot\|$ can be chosen to be any matrix (or vector) norm. 

For example, limit \eqref{lim:le} exists under the finite expectation condition
\[
\mathbb{E}[\sup\{\log\|A_1(\omega)\|, 0\}] < \infty.
\]
The existence of the limit follows easily from the sub-additive property, namely $\log\|S_{m+n}\|\le \log\|S_m\| + \log\|S_n\|$. Limit \eqref{lim:le} is also the definition of the (top) Lyapunov exponent, which characterises the average growth rate of the system under logarithm.

The relationships between the other limits \eqref{lim:norm} \eqref{lim:x} and the Lyapunov exponent were fully explored. Furstenberg and Kesten \cite{Furstenberg1} proved that limit \eqref{lim:norm} almost surely exists and equals the Lyapunov exponent under the same finite expectation condition. In \cite{Furstenberg3}, Furstenberg and Kifer studied the action of $S_{n}(\omega)$ on the projective space $\mathcal{P}(\mathbb{R}^d)$ and proved that under mild conditions, \eqref{lim:x} also almost surely gives the Lyapunov exponent. 
\subsection{Main Results}
In this paper, we plan to study another type of limit theorem which can be considered as an analogue of the first directional derivative of the Lyapunov exponent. 

Given a probability space $(\Omega,\mathcal{A},\mathbb{P}, \sigma)$ on which we can define an i.i.d. sequence $\{(A_n(\omega)\}_{n\ge 1}$ of $d\times d$ invertible matrices as well as a random matrix $B(\omega)$ (not necessarily invertible) with induced i.i.d. sequence $B_n(\omega) := B(\sigma^{n-1}\omega)\ (n\ge1)$, define
\begin{align}\label{eq:t.formula}
T_n(\omega) := &B_n(\omega)A_{n-1}(\omega)\cdots A_1(\omega)  \\ &+ A_n(\omega)B_{n-1}(\omega)A_{n-2}(\omega)\cdots A_1(\omega) + \dots
 + A_n(\omega)\cdots A_2(\omega)B_1(\omega). \nonumber
\end{align}
Alternatively, we can define $T_n(\omega)$ inductively by 
\begin{equation}
\label{defn:t}
T_n(\omega) = B_n(\omega) S_{n-1}(\omega) + A_n(\omega) T_{n-1}(\omega),
\end{equation}
where $S_n(\omega) = A_n(\omega)S_{n-1}(\omega)$ as above. For convenience, we call $T_{n}(\omega)$ the \emph{first derived product} associated to $(A_1(\omega), B(\omega))$, denoted by $T_{n}(\omega):=\mathfrak{D}_n(\omega, A_1, B)$.

Here we plan to study the following limits
\begin{align}
&\lim_{n\to\infty}\dfrac{1}{n}\dfrac{\|T_n(\omega)\|_2}{\|S_n(\omega)\|_2}, \label{lim:t/s}\\
&\lim_{n\to\infty}\dfrac{1}{n}\dfrac{\|T_n(\omega)x\|_2}{\|S_n(\omega)x\|_2}, \label{lim:t/sx}\\
&\lim_{n\to\infty}\dfrac{1}{n}\dfrac{\langle S_n(\omega)x, T_n(\omega)x\rangle}{\|S_n(\omega)x\|_2^2}, \label{lim:in.prod}
\end{align}
where $x$ is a non-zero deterministic vector in $\mathbb{R}^d$ and $\|\cdot\|_2$ denotes the $\ell_2$-norm of matrices (or vectors), as well as the asymptotic behaviour of $T_{n}(\omega)$ acting on the projective space $\mathcal{P}(\mathbb{R}^d)$ and on the unit circle $\mathbb{S}^{d-1}$.

Under certain conditions, by defining a random variable
\begin{equation}\label{intro.defn:xi}
\xi(\omega)=\dfrac{\langle \widetilde{Z}(\omega), B(\omega)Z(\omega) \rangle}{\langle \widetilde{Z}(\omega), A(\omega)Z(\omega)\rangle},
\end{equation}
where $\widetilde{Z}(\omega)$ and $Z(\omega)$ are two random vectors (formally defined in \Cref{sec:z}) which can be considered as the limit points of $S_{n}^*(\omega)x/\|S_{n}^*(\omega)x\|$ and $S_{n}(\sigma^{-n}\omega)x/\|S_{n}(\sigma^{-n}\omega)x\|$ respectively, we prove that \eqref{lim:t/s} and \eqref{lim:t/sx} both converge to $|\mathbb{E}[\xi(\omega)]|$ almost surely (see \autoref{cor:t.asymp}\ref{t:ts} and \autoref{thm:varphi.x}), and \eqref{lim:in.prod} converges to $\mathbb{E}[\xi(\omega)]$ almost surely (see \autoref{thm:psi}). We also have an integral formula for the limit given by \eqref{eq:xi.int}, that is
\[
\mathbb{E}[\xi(\omega)]= \iiint \dfrac{\langle [y], B[x] \rangle}{\langle [y], A[x] \rangle}\operatorname{d}\mu(A, B)\operatorname{d}\nu([x])\operatorname{d}\nu^*([y]),
\]
where $\nu$ is a $\mu$-invariant measure on the projective space, $\mu^*$ is the law of the i.i.d. sequence $\{A_n^*(\omega)\}_{n\ge1}$ of transposes, and $\nu^*$ is a $\mu^*$-invariant measure on the projective space (the invariant measures exist under our conditions).

Moreover, for any deterministic vector $x\not = 0$, we prove that $S_{n}(\omega)x$ and $T_{n}(\omega)x$ align to the same direction (up to a sign) almost surely as $n\to\infty$ if $\mathbb{E}[\xi(\omega)]\not = 0$ (see \autoref{cor:alignment.x}), and the sign of the direction is determined by the sign of $\mathbb{E}[\xi(\omega)]$ (see \autoref{cor:xy.conv}). Also any limit point of $T_{n}^*(\omega)/\|T_{n}^*(\omega)\|$ is almost surely a rank one matrix (see \autoref{cor:t.asymp} \ref{t:range}).

\subsection{Key Ideas}
If one assumes that any limit point of $S_{n}^*(\omega)/\|S_{n}^*(\omega)\|$ is almost surely a rank one matrix with range being the 1-dimensional vector space $W(\omega)$ (that is, satisfying the contracting property), then each term 
\begin{equation}\label{eq:term}
A_1^*(\omega)\cdots A_{i-1}^*(\omega)B_i^*(\omega)A_{i+1}^*(\omega)\cdots A_n^*(\omega)
\end{equation}
of $T_{n}^*(\omega)$ almost surely converges to a rank one matrix after normalisation as $n\to\infty$. However, sum of rank one matrices is most likely not a rank one matrix unless they all share the same range, and $B_i^*(\omega)$ in the term \eqref{eq:term} constantly prevents them from having the same range. This suggests the problem is not trivial.

Notice when $i$ in the term \eqref{eq:term} is sufficiently large, the term has a range very close to the range $W(\omega)$ of $S_{n}^*(\omega)/\|S_{n}^*(\omega)\|$. It is natural to think that we can separate $T_{n}^*(\omega)$ into two parts, one being the sum of terms with $i$ sufficiently large (denoted by $P_\infty$), and the other with the rest of the terms (denoted by $P_{0}$). If the norm of $P_{\infty}$ is way larger than the norm of $P_{0}$, then we can expect that the range of $T_{n}^*(\omega)/\|T_{n}^*(\omega)\|$ is dominated by the range of $P_{\infty}/\|P_\infty\|$. However, there is a problem in this approach. Note \eqref{eq:term} is a product of $n$ matrices, thus when $n\to\infty$, the norm of $P_0$ might explode (when the Lyapunov exponent associated to $S_{n}(\omega)$ is positive) even though $P_0$ is a finite sum. Meanwhile, even though $P_{\infty}$ is an infinite sum of matrices with almost the same range, its norm is not necessarily big, since $B(\omega)$ can give each term a different sign so that their norms cancel with each other (see \Cref{sec:exmp} \autoref{exmp2}). 

We get around with this problem with the help of $\mathbb{E}[\xi(\omega)]$. Inspired by Ruelle's work \cite{Ruelle79}, we first prove in \autoref{prop:varphi.z} that under mild conditions
\begin{equation}\label{eq:intro.varphi.z}
\lim_{n\to\infty}\dfrac{1}{n}\dfrac{\|T_{n}(\omega)Z(\omega)\|_2}{\|S_{n}(\omega)Z(\omega)\|_2} = |\mathbb{E}[\xi(\omega)]|\quad \text{ a.s.}.
\end{equation}
This implies that when $\mathbb{E}[\xi(\omega)]\not = 0$, $\|T_{n}(\omega)Z(\omega)\|_2/\|S_{n}(\omega)Z(\omega)\|_2\to\infty$ almost surely. This gives the dominance of $P_\infty$ in norms as we wanted above, except that it is only acting on $Z(\omega)$. That is, we obtain in \autoref{cor:tz} that $S_{n}(\omega)Z(\omega)$ and $T_{n}(\omega)Z(\omega)$ align to the same direction as $n\to\infty$. Furthermore, based on this result, for a given deterministic vector $x\not = 0$, by comparing $\|T_{n}(\omega)x\|_2/\|S_{n}(\omega)x\|_2$ and $\|T_{n}(\omega)Z(\omega)\|_2/\|S_{n}(\omega)Z(\omega)\|_2$, we prove in \autoref{thm:varphi.x} that
\begin{equation}\label{eq:intro.varphi.x}
\lim_{n\to\infty}\dfrac{1}{n}\dfrac{\|T_{n}(\omega)x\|_2}{\|S_{n}(\omega)x\|_2} = |\mathbb{E}[\xi(\omega)]|\quad \text{ a.s.}.
\end{equation}
With the same arguments (see \autoref{thm:alignment}), we have $S_{n}(\omega)x$ and $T_{n}(\omega)x$ align to the same direction as $n\to\infty$. Finally we use these results to compute limit \eqref{lim:in.prod}.

\subsection{Outline} 
In \Cref{sec:motivations}, we explain multiple motivations for the definition of $T_{n}(\omega)$ including products of random matrices in the reducible case, as well as the first directional derivative of the Lyapunov exponent. \Cref{sec:fes} introduces several finite expectation conditions which will be used throughout the paper. \Cref{sec:z} gives the definitions of two random directions $\widetilde{Z}(\omega)$ and $Z(\omega)$ as well as their properties. These two random directions will be used to define $\xi(\omega)$ in \Cref{sec:varphi.z}, where we will also give the proof of \eqref{eq:intro.varphi.z}. \Cref{sec:ind} gives the inductive formula of $\|T_{n}(\omega)x\|_2/\|S_{n}(\omega)x\|_2$ and studies the visits of $T_{n}(\omega)x$ at $0$. \Cref{sec:angles} gives a formal proof of the key idea above, proving $S_{n}(\omega)x(\omega)$ and $T_{n}(\omega)x(\omega)$ align to the same direction if $\|T_{n}(\omega)x(\omega)\|_2/\|S_{n}(\omega)x(\omega)\|_2$ converges to $\infty$ almost surely. \Cref{sec:varphi.x} studies limits \eqref{lim:t/s} \eqref{lim:t/sx} and the limit points of $T_{n}^*(\omega)/\|T_{n}^*(\omega)\|$, while limit \eqref{lim:in.prod} is studied in \Cref{sec:psi.x}. Results in \Cref{sec:varphi.x} and \Cref{sec:psi.x} are under the assumption $\mathbb{E}[\xi(\omega)]\not = 0$. In \Cref{sec:drop}, we generalise some of the results by dropping this assumption using a simple linear trick. \Cref{sec:exmp} gives several examples relevant to the problem, including a trivial example of commutative scalar products, an example of $T_{n}(\omega)$ visiting $0$ infinitely often almost surely, and examples of failures of the theory without the conditions assumed in our results.

\subsection{Notations and Basic Definitions.} 
For functions $f$ and $g$, denote $f^+:=\sup\{f, 0\}$ and $f^-:=\sup\{-f,0\}$. In particular, $\log^+f:=\sup\{\log f, 0\}$. We denote by $\|\cdot\|$ a general norm of matrices or vectors, and use $\|\cdot\|_2$ to specify the $\ell_2$-norm of matrices or vectors. 

For a vector $x\in\mathbb{R}^d$, if $x\not = 0$, then we denote by $[x]$ its class in the projective space $\mathcal{P}(\mathbb{R}^d)$, representing the direction of the vector $x$, and denote by $\bar{x}$ the normalised vector $x/\|x\|$ in the unit circle $\mathbb{S}^{d-1}$. Note that $\bar{x}$ and $\overline{-x}$ share the same direction $[x]$. Furthermore, if $A\in M(d,\mathbb{R})$ is a $d\times d$ matrix, we can define an action of $A$ on $\mathcal{P}(\mathbb{R}^d)$ by $A\cdot [x] = [Ax]$ if $\|Ax\| \not = 0$. If $\mu$ is a probability measure on matrices, we say $\nu$ is a $\mu$-invariant measure if it is a probability measure on $\mathcal{P}(\mathbb{R}^d)$ invariant under the action defined above.
Given a vector subspace $V$ of $\mathbb{R}^d$, denote the orthogonal complement of $V$ by
\[
V^{\perp}:= \{u\in\mathbb{R}^d: \langle u, v \rangle = 0, \forall \ v\in V\}.
\]
For a vector $x\not = 0$, denote by $x^{\perp}$ the orthogonal complement of the vector subspace generated by vector $x$. Given a matrix $M$, we use $M^*$ to denote the transpose of that matrix.

When two random variables $\xi(\omega)$ and $\eta(\omega)$ follow the same law, we write $\xi(\omega)\sim_d\eta(\omega)$. Given a subset $S\subseteq GL(d,\mathbb{R})$, we say $S$ is \emph{irreducible} if there is no proper linear subspace $V$ of $\mathbb{R}^d$ which is invariant under each element of $S$. We say $S$ is \emph{strongly irreducible} if there do not exist a finite family of linear subspaces $V_1, \dots, V_k$ such that $V_1\cup\dots\cup V_k$ is invariant under each element of $S$. We say $S$ is \emph{contracting} if there is a sequence $\{M_n\}_{n\ge1}$ of matrices in $S$ such that $M_n/\|M_n\|$ converges to a rank one matrix.

When $\mu$ is a probability measure supported on $GL(d,\mathbb{R})$, let $\mathcal{S}_\mu$ be the closed semigroup of $GL(d,\mathbb{R})$ generated by the matrices in the support of $\mu$. We say $\mu$ is irreducible(resp. strongly irreducible, contracting) if $\mathcal{S}_\mu$ is irreducible (resp. strongly irreducible, contracting). Finite expectation conditions (FE1)-(FE4) are introduced at the end of \Cref{sec:fes}.

For a given random vector $x(\omega)\not = 0$, for simplicity, we will write 
\begin{equation}\label{defn:x}
\begin{aligned}
x_m(\omega, x) := \dfrac{S_{m}(\omega)x}{\|S_{m}(\omega)x\|}, \quad
y_m(\omega, x) := \dfrac{T_{m}(\omega)x}{\|T_{m}(\omega)x\|}, \quad
\varphi_m(\omega, x):= \dfrac{\|T_{m}(\omega)x\|}{\|S_{m}(\omega)x\|}.
\end{aligned}
\end{equation}
When $\|\cdot\|$ is chosen to be $\ell_2$-norm $\|\cdot\|_2$, we also define
\begin{equation}\label{defn:psi}
\psi_m(\omega, x) := \dfrac{\langle S_{m}(\omega)x, T_{m}(\omega)x \rangle}{\|S_{m}(\omega)x\|_2^2}.
\end{equation}
In the article, $x_m, y_m, \varphi_m, \psi_m$ will be used for convenience when there is no ambiguity. Note $y_m$ is well-defined only when $T_{m}(\omega)x \not = 0$, while since each matrix $A_n(\omega)$ is assumed to be invertible, $x_m, \varphi_m$ and $\psi_m$ are always well-defined. A discussion of when $y_m$ is well-defined can be found in \Cref{sec:ind}.

\section{Motivations}
\label{sec:motivations}

The construction of $T_n(\omega)$ above seems artificial at the first glance. In this section, we explain several basic ways to understand $T_n(\omega)$ and how it is relevant to a few seemingly different questions which have not been fully investigated in the area of products of random matrices.
\subsection{Products of Random Matrices: Reducible Case}
If $\{(A_n(\omega), B_n(\omega))\}_{n\ge 1}$ is an i.i.d. sequence of $d\times d$ real matrix pairs, then we can define $2d\times 2d$ matrices
\[
C_n(\omega) := \begin{pmatrix}
A_n(\omega) & B_n(\omega) \\
O & A_n(\omega)
\end{pmatrix}.
\]
Now $\{C_n(\omega)\}_{n\ge 1}$ is an i.i.d. sequence following a reducible probability measure, as $\mathbb{R}^d\times 0^d\subset \mathbb{R}^{2d}$ is a common invariant subspace for all matrices. Define $\widetilde{S}_n(\omega) = C_n(\omega)\cdots C_1(\omega)$, then we can immediately see that
\[
\widetilde{S}_n(\omega) = \begin{pmatrix}
S_n(\omega) & T_n(\omega)\\
O & S_n(\omega)
\end{pmatrix}
\]
by the inductive formula \eqref{defn:t}. It is shown in \cite{Furstenberg3} \cite{Hennion84} that, if $\gamma$ is the top Lyapunov exponent associated to $S_{n}(\omega)$, then it is also the top Lyapunov exponent associated to $\widetilde{S}_{n}(\omega)$. Moreover, for $\epsilon >0$ we have $\|T_{n}(\omega)\|\le e^{n(\gamma+\epsilon)}$ when $n$ is sufficiently large. However, this estimation is quite crude. For example, when $B_n(\omega) \equiv 0$, we have $|T_{n}(\omega)|\equiv 0$. \Cref{sec:exmp} \autoref{exmp2} gives an example in which $B_n(\omega)\not\equiv 0$ but $T_{n}(\omega) = 0 \ \text{i.o.}$. In this paper we give more refined estimates such as (see \autoref{cor:t.asymp} \ref{t:ts})
\[
\lim_{n\to\infty}\dfrac{1}{n}\dfrac{\|T_{n}(\omega)\|_2}{\|S_{n}(\omega)\|_2} = |\mathbb{E}[\xi(\omega)]|
\]
under mild conditions, where $\xi(\omega)$ is defined in \eqref{intro.defn:xi} and \eqref{defn:xi}, with $\mathbb{E}[\xi(\omega)]=0$ in previous two examples.

The study of the random matrix products in reducible case is also often related to the study of random difference equations. For example, \cite{Kesten73} studied equations of the form
\[
T_{n}(\omega) = A_n(\omega)T_{n-1}(\omega) + Q_n(\omega),
\]
where the pair $\{(A_n(\omega),Q_n(\omega))\}_{n\ge1}$ is i.i.d.. By comparing with our definition \eqref{defn:t}, we can see that our problem replaces $Q_n(\omega)$ by $B_n(\omega)S_{n-1}(\omega)$, which is no longer independent. In fact, Kesten's problem can be considered as the random matrix products of the form
\[
\begin{pmatrix}
A_n(\omega) & Q_n(\omega) \\
O & I_d
\end{pmatrix}.
\]

We remark here, in our problem since 
\[
\widetilde{S}_{n+m}(\omega) = \widetilde{S}_n(\sigma^m\omega)\widetilde{S}_m(\omega) = 
\begin{pmatrix}
S_{n}(\sigma^m\omega) & T_{n}(\sigma^m\omega) \\
O & S_{n}(\sigma^m\omega)
\end{pmatrix}
\begin{pmatrix}
S_{m}(\omega) & T_{m}(\omega) \\
O & S_{m}(\omega)
\end{pmatrix},
\]
comparing the $(1,2)$-block on both sides gives another inductive formula
\begin{equation}
\label{eq:t.ind}
T_{m+n}(\omega) = S_{n}(\sigma^m\omega)T_{m}(\omega) + T_{n}(\sigma^m\omega)S_{n}(\omega).
\end{equation}
This will be frequently used in later part of the paper.

\subsection{Products of Random Matrices over $\mathbb{R[\epsilon]}/(\epsilon^2)$}
Given the same i.i.d. sequence of real matrix pairs $\{(A_n(\omega), B_n(\omega))\}_{n\ge 1}$, then $\{A_n(\omega) + \epsilon B_n(\omega)\}_{n\ge 1}$ is an i.i.d. sequence of matrices over the ring $\mathbb{R}[\epsilon]/(\epsilon^2)$. One can easily verify
\[
(A_n(\omega) + \epsilon B_n(\omega)) \cdots (A_1(\omega) + \epsilon B_1(\omega)) = S_n(\omega) + \epsilon T_n(\omega).
\]
Therefore the study of random matrix products over $\mathbb{R[\epsilon]}/(\epsilon^2)$ reduces to the study of $S_{n}(\omega)$ and $T_{n}(\omega)$.

\subsection{First Directional Derivative of the Lyapunov Exponent}
Let $\epsilon>0$ and regard $A_{n}(\omega) + \epsilon B_{n}(\omega)$ as a perturbation of the matrix $A_n(\omega)$ in the direction $B$. We still assume $\{(A_n(\omega), B_n(\omega))\}_{n\ge 1}$ is an i.i.d. sequence of real matrix pairs with $A_n(\omega)$'s all invertible. Let $\gamma(\epsilon)$ be the Lyapunov exponent associated to the perturbed problem, that is
\[
\gamma(\epsilon) = \lim_{n\to\infty}\dfrac{1}{n}\log\|S_n^\epsilon(\omega)\|,
\]
where $S_{n}^\epsilon(\omega):=(A_{n}(\omega) + \epsilon B_{n}(\omega))\cdots (A_{1}(\omega) + \epsilon B_{1}(\omega)) = S_{n}(\omega) + \epsilon T_{n}(\omega) + O(\epsilon^2)$ as $\epsilon\to0$.

Now we choose the vector norm to be the $\ell_2-$norm, then
\begin{align*}
\log\|S_{n}^\epsilon(\omega)x\|_2^2 & = \log\|S_{n}(\omega)x+\epsilon T_{n}(\omega)x + O(\epsilon^2)\|_2^2\\
& = \log\left(\|S_n(\omega)x\|_2^2 + 2\epsilon\langle S_{n}(\omega)x, T_{n}(\omega)x\rangle + O(\epsilon^2) \right)\\
& = \log\|S_{n}(\omega)x\|_2^2 + \log\left(1 + 2\epsilon\dfrac{\langle S_{n}(\omega)x,T_{n}(\omega)x \rangle}{\|S_{n}(\omega)x\|_2^2} + O(\epsilon^2)\right)\\
& = \log\|S_{n}(\omega)x\|_2^2 + 2\epsilon\dfrac{\langle S_{n}(\omega)x,T_{n}(\omega)x \rangle}{\|S_{n}(\omega)x\|_2^2} + O(\epsilon^2).
\end{align*}
Therefore, letting $n\to\infty$ in the equation
\[
\dfrac{1}{n}\log\|S_{n}^\epsilon(\omega)x\|_2 = \dfrac{1}{n}\log\|S_{n}(\omega)x\|_2 + \dfrac{1}{n}\dfrac{\langle S_{n}(\omega)x, T_{n}(\omega)x \rangle}{\|S_{n}(\omega)x\|_2^2}\epsilon + O(\epsilon^2)
\]
gives
\begin{equation}\label{eq:derle}
\lim_{n\to\infty} \dfrac{1}{n} \dfrac{\langle S_{n}(\omega)x, T_{n}(\omega)x\rangle}{\|S_{n}(\omega)x\|_2^2} + O(\epsilon) = \dfrac{\gamma(\epsilon) - \gamma(0)}{\epsilon}.
\end{equation}
As we can see, the limit \eqref{lim:in.prod} appears on the left-hand side of \eqref{eq:derle}. Moreover, in \autoref{cor:alignment.x} we will prove that $S_{n}(\omega)x$ and $T_{n}(\omega)x$ align to the same direction almost surely as $n\to\infty$ under certain conditions. In this case, this limit can also be reduced to \eqref{lim:t/sx} immediately (up to a sign). Of course, we do not know if the limit on the left-hand side of \eqref{eq:derle} exists a priori, or if $\gamma$ is differentiable at $\epsilon = 0$, or if the terms in $O(\epsilon)$ might explode when $n\to\infty$. The derivation above is informal, but it motivates the study of the limits \eqref{lim:t/sx} and \eqref{lim:in.prod}.

The analyticity of the Lyapunov exponent was first studied by Ruelle \cite{Ruelle79} under the condition that the matrices preserve a family of cones. However, given a set of matrices, it is not clear how to verify if they preserve certain family of cones, nor how to construct the family of cones, except when the set consists of positive matrices (in which case they all preserve the positive orthant). In this paper, we do not intend to study the analyticity of the Lyapunov exponent, but the limits which can potentially give the first directional derivative of the Lyapunov exponent. Moreover, we study under the conditions of strong irreducibility and the contracting property, which are also usually easy to verify.

\section{Finite Expectation Conditions}
\label{sec:fes}

In this section, we introduce several finite expectation conditions and show how they give a crude bound for $\|T_{n}(\omega)\|/\|S_{n}(\omega)\|$.
\begin{lemma}\label{lm:st.bnd}
Assume $\mathbb{E}[\log^+\|A_1(\omega)\|] < \infty$. Then there exists an integer $N_1>0$ such that for any $n>N_1$, there exists $C_1(n)>0$ satisfying
\begin{align*}
\|S_{n}(\sigma^m\omega)\| \le C_1(n)\quad \text{ a.s.},
\end{align*}
for all $m\ge 0, \omega\in\Omega$. 
Moreover,
\begin{enumerate}[label=(\alph*)]
\item If one further has  $\mathbb{E}[\|B_1(\omega)A_1^{-1}(\omega)\|]<\infty$, then there exists an integer $N_2>0$ such that for any $n>N_2$, there exists $C_2(n)>0$ satisfying
\[
\|T_{n}(\sigma^m\omega)\|  \le C_2(n)\quad \text{ a.s.},
\]
for all $m\ge 0, \omega\in\Omega$. 
\item If one further has $\mathbb{E}[\log^+\|A_1^{-1}(\omega)\|] < \infty$, then there exists an integer $N_3>0$ such that for any $n>N_3$, there exists $C_0(n)>0$ satisfying
\[
C_0(n)\le \|S_{n}(\sigma^m\omega)x\| \le C_1(n)\quad \text{ a.s.},
\]
for all $m\ge 0, \omega\in\Omega$ and $x\in\mathbb{R}^d$ with $\|x\|=1$.
\end{enumerate}

\end{lemma}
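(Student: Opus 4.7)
The plan is to derive all three bounds from submultiplicativity of the operator norm combined with the shift invariance of the i.i.d.\ structure. Because $\sigma$ preserves $\mathbb{P}$, $S_n(\sigma^m\omega)$ shares the distribution of $S_n(\omega)$, so an almost-sure bound on one transfers to each shift, and a countable intersection of the exceptional null sets over $m\ge 0$ makes the bound simultaneous in $m$.

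For the main claim, I would begin from $\|S_n(\sigma^m\omega)\|\le \prod_{k=m+1}^{m+n}\|A_k(\omega)\|$, giving
\[
\log^+\|S_n(\sigma^m\omega)\|\;\le\;\sum_{k=m+1}^{m+n}\log^+\|A_k(\omega)\|.
\]
Under $\mathbb{E}[\log^+\|A_1(\omega)\|]<\infty$, each summand is almost surely finite, hence so is the $n$-fold sum; this yields an almost-surely finite bound $C_1(n)$, with the threshold $N_1$ chosen where the large-$n$ behaviour stabilises.

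For part (a), I would rewrite each summand of \eqref{eq:t.formula} by inserting $A_iA_i^{-1}$:
\[
A_n\cdots A_{i+1}\,B_i\,A_{i-1}\cdots A_1 \;=\; S_{n-i}(\sigma^i\omega)\cdot \bigl(B_i(\omega)A_i^{-1}(\omega)\bigr)\cdot S_i(\omega),
\]
using the identity $A_n(\omega)\cdots A_{i+1}(\omega)=S_{n-i}(\sigma^i\omega)$. Submultiplicativity, the main claim for the outer $S$-factors (with the finitely many boundary indices $i\le N_1$ or $n-i\le N_1$ treated by the direct product bound above), and the hypothesis $\mathbb{E}[\|B_1A_1^{-1}\|]<\infty$ controlling the middle factor then combine to give $C_2(n)$. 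For part (b), the inequality $\|x\|\le \|S_n^{-1}(\sigma^m\omega)\|\cdot\|S_n(\sigma^m\omega)x\|$ reduces the lower bound to an upper bound on $\|S_n^{-1}(\sigma^m\omega)\|$; since $S_n^{-1}(\omega)=A_1^{-1}(\omega)\cdots A_n^{-1}(\omega)$ is itself a product of $n$ i.i.d.\ invertible matrices, the main claim applied to $\{A_k^{-1}(\omega)\}$ under $\mathbb{E}[\log^+\|A_1^{-1}(\omega)\|]<\infty$ yields $\|S_n^{-1}(\sigma^m\omega)\|\le \widetilde{C}(n)$ a.s., and we set $C_0(n):=1/\widetilde{C}(n)$.

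The principal obstacle I foresee is the precise reading of the constants $C_i(n)$. With only a finite $\log^+$ hypothesis, the essential supremum of $\|S_n(\omega)\|$ over $\omega$ is in general infinite, so $C_i(n)$ should be interpreted as almost-surely finite (possibly random in $\omega$) rather than literally deterministic. Once this reading is fixed, the countable intersection of the null sets in $m\ge 0$ and the handling of the boundary indices in $i$ in part (a) become routine.
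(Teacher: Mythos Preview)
Your framework is right---submultiplicativity, shift invariance, the insertion of $A_iA_i^{-1}$ in part (a), and reducing part (b) to an upper bound on $\|S_n^{-1}\|$ are all exactly what the paper does. The genuine gap is that you never invoke the law of large numbers, and without it you cannot produce constants $C_i(n)$ that are simultaneously deterministic and uniform in $m$. Your own diagnosis that ``$C_i(n)$ should be interpreted as almost-surely finite (possibly random in $\omega$)'' is where the argument breaks: a random bound $C_1(n,\omega)$ on $\|S_n(\omega)\|$ does not transfer to a bound on $\|S_n(\sigma^m\omega)\|$ for all $m$ by countable intersection, because the bound itself shifts with $m$, and $\sup_{m\ge 0}\|S_n(\sigma^m\omega)\|$ is typically infinite when $\|A_1\|$ has unbounded support.

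The paper's resolution is that the thresholds $N_i$ are precisely the points beyond which the LLN averages are $\epsilon$-close to their expectations. From $\log\|S_n(\sigma^m\omega)\|\le\sum_{i=1}^n\log^+\|A_i(\sigma^m\omega)\|$ and the strong law $\frac{1}{n}\sum_{i=1}^n\log^+\|A_i(\sigma^m\omega)\|\to\mathbb{E}[\log^+\|A_1\|]$ a.s., one fixes $\epsilon>0$ and takes
\[
C_1(n):=\exp\bigl(n\,\mathbb{E}[\log^+\|A_1\|]+n\epsilon\bigr),
\]
a genuinely deterministic constant independent of $m$ and $\omega$. For part (a) the paper's factorization is also cleaner than yours: bounding the $i$-th summand of $T_n$ by $\|A_n\|\cdots\|A_1\|\cdot\|B_iA_i^{-1}\|$ (pull every $\|A_j\|$ out individually rather than grouping into $\|S_{n-i}\|$ and $\|S_i\|$) gives
\[
\|T_n(\sigma^m\omega)\|\le\Bigl(\prod_{j=1}^n\|A_j(\sigma^m\omega)\|\Bigr)\sum_{i=1}^n\|B_i(\sigma^m\omega)A_i^{-1}(\sigma^m\omega)\|,
\]
and a second application of the LLN to $\frac{1}{n}\sum_i\|B_iA_i^{-1}\|$ yields $C_2(n)=C_1(n)\cdot n\bigl(\mathbb{E}\|B_1A_1^{-1}\|+\epsilon\bigr)$, with no boundary-index bookkeeping needed. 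Part (b) proceeds as you outline, but again with the LLN applied to $\frac{1}{n}\sum_i\bigl(\log^+\|A_i\|+\log^+\|A_i^{-1}\|\bigr)$ supplying the deterministic $C_0(n)$.
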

\begin{proof}
First note
\[
\|S_{n}(\sigma^m\omega)\| \le \|A_n(\sigma^m\omega)\|\cdots\|A_1(\sigma^m\omega)\|.
\]
Therefore
\[
\log\|S_{n}(\sigma^m\omega)\| \le \sum_{i=1}^n \log\|A_i(\sigma^m\omega)\| \le \sum_{i=1}^n \log^+\|A_i(\sigma^m\omega)\|.
\]
This shows
\[
\dfrac{1}{n}\log\|S_{n}(\sigma^m\omega)\| \le \dfrac{1}{n}\sum_{i=1}^n \log^+\|A_i(\sigma^m\omega)\| \to \mathbb{E}[\log^+\|A_1(\omega)\|] \ (n\to\infty) \quad \text{ a.s.},
\]
by the law of large numbers. Fix an $\epsilon > 0$, then there exists an integer $N_1>0$ such that when $n > N_1$, we have
\[
\|S_{n}(\sigma^m\omega)\| \le \exp(n\mathbb{E}[\log^+\|A_1(\omega)\|] + n\epsilon) =: C_1(n)\quad \text{ a.s.}.
\]
Similarly, by \eqref{eq:t.formula} we have
\[
\|T_{n}(\sigma^m\omega)\| \le \|A_n(\sigma^m\omega)\|\cdots\|A_1(\sigma^m\omega)\|\sum_{i = 1}^n \|B_i(\sigma^m\omega)A_i^{-1}(\sigma^m\omega)\|.
\]
As we have seen above, the product part can be bounded by $C_1(n)$. As for the summation part, we have
\[
\dfrac{1}{n}\sum_{i=1}^n \|B_i(\sigma^m\omega)A_i^{-1}(\sigma^m\omega)\| \to \mathbb{E}\left[\|B_1(\omega)A_1^{-1}(\omega)\|\right] \ (n\to\infty) \quad \text{ a.s.},
\]
by the law of large numbers. Fix an $\epsilon > 0$, then there exists an integer $N_2>0$ such that when $n > N_2$, we have
\[
\|T_{n}(\sigma^m\omega)\| \le C_1(n) n \left(\mathbb{E}[\|B_1(\omega)A_1^{-1}(\omega)\|] + \epsilon\right) =: C_2(n)\quad \text{ a.s.}.
\]
Finally, for a matrix $M\in GL(d,\mathbb{R})$ and a normalised vector $x$, we have 
\[
0 = \log\|x\| \le \log\|M^{-1}\| + \log\|Mx\|,
\]
thus
\[
-\log\|Mx\|\le \log\|M^{-1}\| \le \log^+\|M^{-1}\|,
\]
comparing the positive part of both sides gives
\[
\log^-\|Mx\| \le \log^+\|M^{-1}\|.
\]
Together with $\log^+\|Mx\|\le\log^+\|M\|$, we have
\[
|\log\|Mx\|| = \log^+\|Mx\| + \log^-\|Mx\| \le \log^+\|M\| + \log^+\|M^{-1}\|.
\]
Consequently, applying $M = S_{n}(\sigma^m\omega)$ gives
\begin{align*}
\left|\dfrac{1}{n}\log\|S_{n}(\sigma^m\omega)x\|\right| & \le \dfrac{1}{n}\sum_{i=1}^n \left[\log^+\|A_i(\sigma^m\omega)\| + \log^+\|A_i^{-1}(\sigma^m\omega)\|\right] \\
& \to \mathbb{E}[\log^+\|A_1(\omega)\|] + \mathbb{E}[\log^+\|A_1^{-1}(\omega)\|] \ (n\to\infty) \quad \text{ a.s.},
\end{align*}
by the law of large numbers. For a fixed $\epsilon > 0$, when $n$ is sufficiently large, we have
\[
\|S_{n}(\sigma^m\omega)x\| \ge \exp(-n\mathbb{E}[\log^+\|A_1(\omega)\|] - n\mathbb{E}[\log^+\|A_1^{-1}(\omega)\| + n\epsilon] =: C_0(n)>0,
\]
where $C_0(n)>0$ follows from the finite expectation conditions.
\end{proof}
Note we cannot use only $\mathbb{E}[\log^+\|A_1(\omega)\|] < \infty$ and the top Lyapunov exponent $\gamma$ to obtain a lower bound for $\|S_{n}(\sigma^{m}\omega)x\|$ because $\gamma$ might equal to $-\infty$.

From now on, we will for convenience refer the finite expectation conditions by
\begin{enumerate}[label = (FE{\arabic*})]
\item $\mathbb{E}[\log^+\|A_1(\omega)\|] < \infty$;
\item $\mathbb{E}[\|B_1(\omega)A_1^{-1}(\omega)\|] < \infty$;
\item $\mathbb{E}[\log^+\|A_1^{-1}(\omega)\|] < \infty$.
\end{enumerate}
Moreover, by writing  $\ell(M) := \sup\{\log^+\|M\|, \log^+\|M^{-1}\|\}$ for a matrix $M$, we introduce
\begin{enumerate}[resume, label = (FE{\arabic*})]
\item for some $\tau>0$, $\mathbb{E}[\exp(\tau\ell(A_1(\omega)))]<\infty$.
\end{enumerate}

\section{Two Random Directions $Z(\omega)$ and $\widetilde{Z}(\omega)$}
\label{sec:z}
In this section we introduce two important random directions $Z(\omega)$ and $\widetilde{Z}(\omega)$ in $\mathcal{P}(\mathbb{R}^d)$ as well as their properties.

Assume the probability measure $\mu$ is strongly irreducible and contracting, and let $\nu$ be a $\mu$-invariant measure on $\mathcal{P}(\mathbb{R}^d)$. It is a classical result \cite[Theorem III.3.1]{Bougerol} that $S_n(\sigma^{-n}\omega)\nu$ converges weakly to a Dirac measure $\delta_{Z(\omega)}$, in the sense that, for any Borel function $f$ on $\mathcal{P}(\mathbb{R}^d)$,
\[
\int f(S_{n}(\sigma^{-n}\omega)\cdot [x])\operatorname{d}\nu([x]) \to f(Z(\omega)) \quad (n\to\infty).
\]
Here any limit point of $S_{n}(\sigma^{-n}\omega)/\|S_{n}(\sigma^{-n}\omega)\|$ is almost surely a rank one matrix with the range being the 1-dimensional vector space spanned by $Z(\omega)$. If we further have the finite expectation (FE4), then for a vector $x\not = 0$, $S_{n}(\sigma^{-n}\omega)\cdot [x]$ converges in direction to $Z(\omega)$ almost surely \cite[Theorem VI.3.1]{Bougerol}.

We can do the same for $S_{n}^*(\omega) = A_1^*(\omega)\cdots A_n^*(\omega)$. Consider $\{A_n^*(\omega)\}_{n\ge1}$ as an i.i.d. sequence following a probability measure, denoted by $\mu^*$. Note $\mu$ is strongly irreducible and contracting if and only if $\mu^*$ is strongly irreducible and contracting, since both of the conditions are preserved by matrix transpose. Let $\nu^*$ be a $\mu^*$-invariant measure, then $S_{n}^*(\omega)\nu^*$ converges weakly to a Dirac measure $\delta_{\widetilde{Z}(\omega)}$. Under (FE4), we also have $S_{n}^*(\omega)\cdot [x]$ converges to $\widetilde{Z}(\omega)$ almost surely.

The following lemma gives another description of $\widetilde{Z}(\omega)$, and is a corollary of \cite[Proposition III.3.2]{Bougerol}. Note here we take a normalised vector $\widetilde{z}(\omega)\in\mathbb{S}^{d-1}$ as  a representative of $\widetilde{Z}(\omega)\in\mathcal{P}(\mathbb{R}^d)$, and define $|\langle x,\widetilde{Z}(\omega) \rangle|:=|\langle  x,\widetilde{z}(\omega) \rangle|$. It is well-defined because both of the choices $\widetilde{z}(\omega)$ and $-\widetilde{z}(\omega)$ give the same value.

\begin{lemma}\label{lm:ss}
Let $\mu$ be strongly irreducible and contracting, then for any random vector $x(\omega)\not = 0$,
\begin{align*}
\lim_{n\to\infty}\dfrac{\|S_{n}(\omega)x(\omega)\|_2}{\|S_{n}(\omega)\|_2} =  |\langle x(\omega), \widetilde{Z}(\omega) \rangle| \quad \text{ a.s.},
\end{align*}
where $\widetilde{Z}(\omega)$ is the random direction spanning the range of any limit point of $S_{n}^*(\omega)/\|S_{n}^*(\omega)\|$. The convergence is uniform in $x(\omega)\not = 0$.
\end{lemma}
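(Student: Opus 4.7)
The plan is to establish, under the strong irreducibility and contracting hypotheses, uniform convergence on the unit sphere $\mathbb{S}^{d-1}$ of $x\mapsto \|S_n(\omega)x\|_2/\|S_n(\omega)\|_2$ to $x\mapsto|\langle x,\widetilde{z}(\omega)\rangle|$, and then deduce the random-$x$ version by homogeneity.

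I would start from the SVD $S_n(\omega)=U_n\Sigma_n V_n^*$ with singular values $\sigma_1^{(n)}\ge\cdots\ge\sigma_d^{(n)}>0$ and orthonormal right singular vectors $v_1^{(n)},\dots,v_d^{(n)}$. Since $\|S_n(\omega)\|_2=\sigma_1^{(n)}$, for any $x\in\mathbb{S}^{d-1}$,
\[
\frac{\|S_n(\omega)x\|_2^2}{\|S_n(\omega)\|_2^2} \;=\; \langle v_1^{(n)},x\rangle^2 \;+\; \sum_{i=2}^{d}(\sigma_i^{(n)}/\sigma_1^{(n)})^2\,\langle v_i^{(n)},x\rangle^2,
\]
and the remainder is bounded by $(\sigma_2^{(n)}/\sigma_1^{(n)})^2$ uniformly in $x$. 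By the cited \cite[Proposition III.3.2]{Bougerol}, $S_n^*(\omega)/\|S_n^*(\omega)\|_2$ converges almost surely to a rank-one matrix whose range is spanned by $\widetilde{z}(\omega)$. In SVD terms this gives simultaneously $\sigma_2^{(n)}/\sigma_1^{(n)}\to 0$ a.s.\ and, after a measurable choice of sign, $v_1^{(n)}\to \pm\widetilde{z}(\omega)$ a.s.\ in $\mathbb{S}^{d-1}$. Using the elementary estimate
\[
\bigl|\,|\langle v_1^{(n)},x\rangle|-|\langle \widetilde{z}(\omega),x\rangle|\,\bigr| \;\le\; \|v_1^{(n)}\mp\widetilde{z}(\omega)\|_2,
\]
one then passes to the square root in the first display to obtain uniform convergence of $\|S_n(\omega)x\|_2/\|S_n(\omega)\|_2$ to $|\langle x,\widetilde{z}(\omega)\rangle|$ over $x\in\mathbb{S}^{d-1}$; the absolute value naturally absorbs the sign ambiguity inherent to representing $\widetilde{Z}(\omega)\in\mathcal{P}(\mathbb{R}^d)$.

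For a random vector $x(\omega)\ne 0$, both sides are homogeneous of degree one in $x$, so dividing by $\|x(\omega)\|_2$ and applying the uniform convergence at $\bar x(\omega)=x(\omega)/\|x(\omega)\|_2$ on the full-measure set where the uniform estimate holds completes the argument. The main obstacle is the passage from the ``any-limit-point'' form, in which the cited proposition is typically stated, to actual convergence of the whole sequence $v_1^{(n)}$, up to a consistent measurable sign, to $\widetilde{z}(\omega)$; this sign issue is precisely what forces the absolute value in the limit $|\langle x(\omega),\widetilde{Z}(\omega)\rangle|$, and must be handled by exploiting that the limit points are forced onto the single one-dimensional subspace $\widetilde{Z}(\omega)$.
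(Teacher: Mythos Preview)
Your proposal is correct and follows essentially the same route as the paper: both arguments use the singular value decomposition $S_n=U_n\Sigma_nV_n^*$ (the paper calls it the polar decomposition), extract from the rank-one property of the limit points of $S_n^*/\|S_n^*\|_2$ that $\sigma_i^{(n)}/\sigma_1^{(n)}\to 0$ for $i\ge 2$ and that $[v_1^{(n)}]\to\widetilde Z(\omega)$ in $\mathcal P(\mathbb R^d)$, and then read off the limit of $\|S_n x\|_2^2/\|S_n\|_2^2$ from the expansion $\sum_i(\sigma_i^{(n)}/\sigma_1^{(n)})^2\langle v_i^{(n)},x\rangle^2$. Your handling of the sign ambiguity via $\min_{\pm}\|v_1^{(n)}\mp\widetilde z(\omega)\|_2\to 0$ is a touch more explicit than the paper's, which simply passes through a limit point $V_\infty^*e_1$ and notes it represents $\widetilde Z(\omega)$ regardless of the subsequence chosen.
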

\begin{proof}
Let $S_{n}(\omega) = U_n(\omega) D_n(\omega) V_n(\omega)$ be the polar decomposition of the matrix $S_{n}(\omega)$, where $D_n(\omega) = \operatorname{diag}(\alpha_1(n,\omega), \dots, \alpha_d(n,\omega))$ with $\alpha_1(n,\omega)\ge \alpha_2(n,\omega) \ge \dots \alpha_d(n,\omega)> 0$ and $U_n(\omega), V_n(\omega)$ are orthogonal matrices. 

It is easy to see 
\[
\|S_{n}^*(\omega)\|_2 = \sup_{\|x\|_2=1}\|U_n(\omega) D_n(\omega) V_n(\omega)x\|_2 = \|D_n(\omega)\|_2 = \alpha_1(n,\omega).
\]
Thus 
\[
\dfrac{S_{n}^*(\omega)}{\|S_{n}^*(\omega)\|_2} = V_n^*(\omega)\operatorname{diag}\left(1, \dfrac{\alpha_2(n,\omega)}{\alpha_1(n,\omega)}, \dots, \dfrac{\alpha_d(n,\omega)}{\alpha_1(n,\omega)}\right) U_n^*(\omega).
\]
As the space $O(d)$ of orthogonal matrices is compact, and each $0\le \alpha_i(n,\omega)/\alpha_1(n,\omega) \le 1$, let $V_\infty(\omega), U_\infty(\omega)$ and $\beta_i(\omega)$ be limit points of $V_n(\omega), U_n(\omega)$ and $\alpha_i(\omega)/\alpha_1(\omega)$ respectively (where $2\le i\le d$). Now obtain
\[
M(\omega) := V_\infty^*(\omega)\operatorname{diag}(1, \beta_2(\omega), \dots, \beta_d(\omega)) U_\infty^*(\omega)
\]
as a limit point of $S_{n}^*(\omega)/\|S_{n}^*(\omega)\|_2$. As $\mu$ is strongly irreducible and contracting, a limit point $M(\omega)$ almost surely is a rank one matrix with range $W(\omega)$ (see e.g. \cite[Theorem III.3.1]{Bougerol}). That is, $\beta_2(\omega) = \dots = \beta_d(\omega) = 0$ almost surely. As $\beta_i(\omega)$ is an arbitrary limit point of $\alpha_i(n,\omega)/\alpha_1(n,\omega)$, we have
\begin{equation}\label{eq:aa}
\lim_{n\to\infty}\dfrac{\alpha_i(n,\omega)}{\alpha_1(n,\omega)} = 0, \ \text{ a.s. } \ (2\le i\le d) .
\end{equation}
Moreover, $W(\omega)$ is spanned by the direction $\widetilde{Z}(\omega):= [V_\infty^*(\omega)e_1]$, not depending on the choice of limit points $V_\infty^*(\omega)$, where $e_1, \dots, e_d$ denotes the standard basis of $\mathbb{R}^d$.

Then, for vector $x(\omega)\not = 0$, we have
\begin{align*}
\dfrac{\|S_{n}(\omega)x(\omega)\|_2^2}{\|S_{n}(\omega)\|_2^2} & = \dfrac{\|D_n(\omega)V_n(\omega)x(\omega)\|_2^2}{\alpha_1^2(n, \omega)} \\
& = \dfrac{\langle V_n(\omega)x(\omega), D_n^*(\omega)D_n(\omega)V_n(\omega)x(\omega) \rangle}{\alpha_1^2(n,\omega)} \\
& = \sum_{i = 1}^d \dfrac{\alpha_i^2(n,\omega)}{\alpha_1^2(n, \omega)}\langle V_n(\omega)x(\omega), e_i\rangle^2.
\end{align*}
where in the final equality we write $V_n(\omega)x(\omega) = \sum_{i = 1}^d \langle V_n(\omega)x(\omega), e_i \rangle e_i$. Consequently, by \eqref{eq:aa} almost surely we have
\[
\lim_{n\to\infty} \dfrac{\|S_{n}(\omega)x(\omega)\|_2^2}{\|S_{n}(\omega)\|_2^2} = \sum_{i=1}^d \beta_i^2(\omega)\langle x(\omega), V_\infty^*(\omega)e_i \rangle^2 = \langle x(\omega), \widetilde{Z}(\omega) \rangle^2.
\]
Finally, the convergence is uniform in $x(\omega)$ because the statement is equivalent to the one assuming $\|x(\omega)\| = 1$ by normalisation on both sides and by the fact that the unit circle $\mathbb{S}^{d-1}$ is compact. Alternatively, we can see from the proof above that the essence of the convergence comes from $V_n(\omega)\to V_\infty(\omega)$, which is irrelevant with $x(\omega)$.
\end{proof}

With almost exactly the same proof, we have the following corollary. Note the right-hand side of the equation is well-defined since $\widetilde{Z}(\omega)$ appear both on the numerator and the denominator, thus the value does not depend on the choice of representatives.

\begin{cor}\label{cor:sss}
Assume $\mu$ is strongly irreducible and contracting, then for any non-zero random vectors $x(\omega)$ and $y(\omega)$ with $\mathbb{P}(\langle \widetilde{Z}(\omega), x(\omega)\rangle = 0) = 0$, we have
\[
\lim_{n\to\infty}\dfrac{\langle S_{n}(\omega)x(\omega), S_{n}(\omega)y(\omega) \rangle}{\|S_{n}(\omega)x(\omega)\|^2_2} = \dfrac{\langle \widetilde{Z}(\omega), y(\omega)\rangle}{\langle \widetilde{Z}(\omega), x(\omega)\rangle} \quad \text{ a.s.},
\]
where $\widetilde{Z}(\omega)$ is the random direction spanning the range of any limit point of $S_{n}^*(\omega)/\|S_{n}^*(\omega)\|$. The convergence is uniform in $x(\omega)\not = 0$ and $y(\omega)\not = 0$.
\end{cor}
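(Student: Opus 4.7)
The plan is to mimic the SVD-based argument of Lemma~\ref{lm:ss}, using the same polar decomposition $S_n(\omega) = U_n(\omega)D_n(\omega)V_n(\omega)$ with singular values $\alpha_1(n,\omega)\ge\cdots\ge\alpha_d(n,\omega)$, and to read off the limit from the bilinear counterpart of the quadratic expansion already computed there. The key step is to write $V_n(\omega)x(\omega) = \sum_i\langle V_n(\omega)x(\omega),e_i\rangle e_i$ and similarly for $y(\omega)$; the orthogonality of $U_n(\omega)$ then gives the identity
\[
\langle S_n(\omega)x(\omega),\, S_n(\omega)y(\omega)\rangle = \sum_{i=1}^d \alpha_i^2(n,\omega)\,\langle V_n(\omega)x(\omega), e_i\rangle\,\langle V_n(\omega)y(\omega), e_i\rangle,
\]
which generalises the quadratic identity underlying Lemma~\ref{lm:ss}.

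Next, I will divide by $\alpha_1^2(n,\omega)$ and use \eqref{eq:aa} to discard every term with $i\ge 2$, leaving only the $i=1$ contribution. Passing to a subsequence along which $V_n(\omega)\to V_\infty(\omega)$ with $[V_\infty^*(\omega)e_1] = \widetilde{Z}(\omega)$ (as extracted in the proof of Lemma~\ref{lm:ss}), this will yield
\[
\lim_{n\to\infty}\frac{\langle S_n(\omega)x(\omega),\,S_n(\omega)y(\omega)\rangle}{\alpha_1^2(n,\omega)} = \langle x(\omega), V_\infty^*(\omega)e_1\rangle\,\langle y(\omega), V_\infty^*(\omega)e_1\rangle \quad\text{a.s.}
\]
Combining this with $\|S_n(\omega)x(\omega)\|_2^2/\alpha_1^2(n,\omega) \to \langle x(\omega),V_\infty^*(\omega)e_1\rangle^2$ from Lemma~\ref{lm:ss} and taking the quotient will cancel one factor of $\langle x(\omega), V_\infty^*(\omega)e_1\rangle$, producing $\langle\widetilde{Z}(\omega),y(\omega)\rangle/\langle\widetilde{Z}(\omega),x(\omega)\rangle$. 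Here the hypothesis $\mathbb{P}(\langle \widetilde{Z}(\omega), x(\omega)\rangle=0)=0$ is invoked precisely to guarantee that the denominator is almost surely nonzero, so that the quotient limit is actually valid.

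The one subtlety worth addressing explicitly is that $V_\infty(\omega)$ is only determined up to subsequence and that $V_\infty^*(\omega)e_1$ is only determined up to sign (since $\widetilde{Z}(\omega)$ is a projective class). This is not an obstruction: the numerator is bilinear in $V_\infty^*(\omega)e_1$ and so the sign squares out, while the final ratio is manifestly invariant under replacing the representative $\widetilde z(\omega)$ by $-\widetilde z(\omega)$, which matches the author's convention that the right-hand side is well-defined on projective classes. Uniformity in $(x(\omega), y(\omega))$ among nonzero vectors follows from normalising to $\mathbb{S}^{d-1}$ and the compactness argument from Lemma~\ref{lm:ss}, together with the observation that the convergence is driven entirely by $V_n(\omega)\to V_\infty(\omega)$ and the ratios $\alpha_i(n,\omega)/\alpha_1(n,\omega)$, neither of which depends on the test vectors; strictly, this uniformity is understood subject to keeping $|\langle \widetilde{Z}(\omega), x(\omega)\rangle|$ bounded below, which is the only point where the argument could fail if pushed beyond the stated hypotheses.
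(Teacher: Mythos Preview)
Your proposal is correct and follows essentially the same approach as the paper, which simply states that the corollary follows ``with almost exactly the same proof'' as Lemma~\ref{lm:ss}; you have faithfully carried out that bilinear variant of the SVD expansion. Your explicit remark that uniformity in the quotient tacitly requires $|\langle \widetilde{Z}(\omega), x(\omega)\rangle|$ to be bounded away from zero is a fair caveat on the paper's somewhat loose uniformity claim.
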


The following is a collection of properties of $Z(\omega), \widetilde{Z}(\omega)$ for later use. Similar to the discussion before \autoref{lm:ss}, $\|S_{n}(\omega)Z(\omega)\|$ and $|\langle \widetilde{Z}(\omega), Z(\omega) \rangle|$ are defined by choosing normalised vectors as representatives, and are obviously well-defined. One ought to distinguish $S_{n}(\omega)Z(\omega)$ from $S_{n}(\omega)\cdot Z(\omega)$, where the former denotes $\pm S_{n}(\omega)z(\omega)\in\mathbb{R}^d$ (well-defined when there is no ambiguity of signs) after choosing a normalised representative $z(\omega)\in\mathbb{S}^{d-1}$ for $Z(\omega)\in \mathcal{P}(\mathbb{R}^d)$, and the latter denotes the action of $S_{n}(\omega)$ on $Z(\omega)\in \mathcal{P}(\mathbb{R}^d)$.

\begin{lemma} \label{lm:zlm}
Assume $\mu$ is strongly irreducible and contracting and satisfies (FE1) and (FE3), then
\begin{enumerate}[label= (\alph*)]
\item \label{zlm:tz}
$A_1^*(\omega)\cdot\widetilde{Z}(\sigma\omega) = \widetilde{Z}(\omega)$;

\item \label{zlm:z}
$A_1(\omega)\cdot Z(\omega) = Z(\sigma\omega)$;

\item \label{zlm:nu}
$\nu:=\displaystyle\int \delta_{Z(\omega)}\mathbb{P}(\operatorname{d}\omega)$ is a $\mu$-invariant measure on the projective space $\mathcal{P}(\mathbb{R}^d)$;

\item \label{zlm:nust}
$\nu^*:=\displaystyle\int \delta_{\widetilde{Z}(\omega)}\mathbb{P}(\operatorname{d}\omega)$ is a $\mu^*$-invariant measure on the projective space $\mathcal{P}(\mathbb{R}^d)$;

\item \label{zlm:le}
Let $\gamma$ be the top Lyapunov exponent defined by \eqref{lim:le}, then
\begin{align*}
\lim_{n\to\infty} \dfrac{1}{n}\mathbb{E}\left[\log\|S_{n}(\omega)Z(\omega)\|\right] &= \gamma,\\
\lim_{n\to\infty} \dfrac{1}{n}\log\|S_{n}(\omega)Z(\omega)\| &= \gamma, \quad \text{ a.s.};
\end{align*}

\item \label{zlm:nzero}
$|\langle \widetilde{Z}(\omega), Z(\omega)\rangle|\not = 0\ \text{ a.s.}$;

\item \label{zlm:fin.inv}
$\displaystyle\sup_{\omega\in\Omega}\left[\dfrac{1}{|\langle \widetilde{Z}(\omega), Z(\omega)\rangle|}\right] < \infty$.
\end{enumerate}
\end{lemma}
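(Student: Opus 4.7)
Parts (a) and (b) follow by passing to the limit in the cocycle identities $S_{n+1}^{*}(\omega) = A_{1}^{*}(\omega)\,S_{n}^{*}(\sigma\omega)$ and $S_{n}(\sigma^{-n}\sigma\omega) = A_{1}(\omega)\,S_{n-1}(\sigma^{-(n-1)}\omega)$. Applying the second to a direction $[x]$ gives
\[
S_{n}(\sigma^{-n}\sigma\omega)\cdot [x] = A_{1}(\omega)\cdot\bigl(S_{n-1}(\sigma^{-(n-1)}\omega)\cdot[x]\bigr);
\]
the weak convergences $S_{n}(\sigma^{-n}\omega)\nu \to \delta_{Z(\omega)}$ and $S_{n}^{*}(\omega)\nu^{*}\to\delta_{\widetilde{Z}(\omega)}$ recalled in the text let both sides pass to the limit for $\nu$-a.e.\ test direction, yielding $Z(\sigma\omega) = A_{1}(\omega)\cdot Z(\omega)$ and symmetrically $\widetilde{Z}(\omega) = A_{1}^{*}(\omega)\cdot\widetilde{Z}(\sigma\omega)$. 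For (c) and (d), I would check the invariance identity directly on the canonical model $\Omega = GL(d,\mathbb{R})^{\mathbb{Z}}$: there $A_{1}(\omega)=\omega_{0}$ is measurable in the $0$-th coordinate, while $Z(\omega)$ is a limit of products of $\omega_{-1},\omega_{-2},\dots$, so the two are independent. Combined with (b) and the $\sigma$-invariance of $\mathbb{P}$ this gives
\[
\iint f(A\cdot[x])\,d\mu(A)\,d\nu([x]) = \mathbb{E}[f(A_{1}(\omega)\cdot Z(\omega))] = \mathbb{E}[f(Z(\sigma\omega))] = \mathbb{E}[f(Z(\omega))] = \int f\,d\nu,
\]
proving (c); part (d) is the mirror argument, with $A_{1}^{*}(\omega)$ independent of the future-measurable $\widetilde{Z}(\sigma\omega)$ and using (a).

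For (e), I would exploit the independence between $Z(\omega)$ (past-measurable) and $S_{n}(\omega)$ (future-measurable). Under strong irreducibility, contracting, and (FE1), the classical Furstenberg theorem yields $(1/n)\log\|S_{n}(\omega)\,z\|\to\gamma$ almost surely for every fixed deterministic $z\neq 0$. Conditioning on $Z(\omega)$ and applying Fubini transfers this to $(1/n)\log\|S_{n}(\omega)\,Z(\omega)\|\to\gamma$ a.s. The convergence in expectation follows from dominated convergence, since the uniform bound $|\log\|S_{n}(\omega)z\||/n \le (1/n)\sum_{i=1}^{n}\bigl(\log^{+}\|A_{i}(\omega)\| + \log^{+}\|A_{i}^{-1}(\omega)\|\bigr)$ used in the proof of \autoref{lm:st.bnd} is uniformly integrable under (FE1) and (FE3).

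For (f), I would invoke the standard properness of $\nu$ under strong irreducibility: every proper subspace $V\subseteq\mathbb{R}^{d}$ satisfies $\nu([V])=0$, since otherwise the finite family of subspaces of maximal $\nu$-measure would be semigroup-invariant, contradicting strong irreducibility. Hence $\nu(\tilde{z}^{\perp})=0$ for every $\tilde{z}$, and Fubini together with the independence of $Z$ and $\widetilde{Z}$ yields $\mathbb{P}(\langle\widetilde{Z},Z\rangle=0)=0$. Part (g) is where I expect the main obstacle to lie: interpreting $\sup_{\omega\in\Omega}$ as essential supremum, one must upgrade the almost-everywhere non-orthogonality from (f) to a uniform lower bound on $|\langle\widetilde{Z},Z\rangle|$. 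The natural plan is via compactness: since $Z$ and $\widetilde{Z}$ are independent, the joint law is supported on $\operatorname{supp}(\nu)\times\operatorname{supp}(\nu^{*})$, which is compact in $\mathcal{P}(\mathbb{R}^{d})\times\mathcal{P}(\mathbb{R}^{d})$ with $|\langle z,\tilde{z}\rangle|$ continuous there, so it suffices to rule out any pair $(z_{0},\tilde{z}_{0})$ in the joint support with $\langle z_{0},\tilde{z}_{0}\rangle=0$. This pointwise transversality is the delicate step: the plan is to show that a putative null pair, propagated under the $\mu$- and $\mu^{*}$-semigroup actions which preserve the respective supports, would generate a positive-measure collection of null pairs, in contradiction with (f).
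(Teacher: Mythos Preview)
Your arguments for (a)--(d) match the paper's. For (e) and (f) you take valid alternative routes. The paper proves (e) by observing from (b) that $\log\|S_n(\omega)Z(\omega)\|=\sum_{i=1}^n\log\|A_1(\sigma^{i-1}\omega)Z(\sigma^{i-1}\omega)\|$ is a Birkhoff sum, applies the ergodic theorem, and identifies the limit as $\gamma$ via the Furstenberg--Kifer formula $\gamma=\iint\log\|A[x]\|\,d\mu(A)\,d\nu([x])$. For (f) the paper does not use properness of $\nu$ or independence of $Z$ and $\widetilde{Z}$; instead it argues dynamically: if $\langle\widetilde{Z}(\omega),Z(\omega)\rangle=0$ then $Z(\omega)$ lies in the slow Oseledets subspace, so $\limsup_n(1/n)\log\|S_n(\omega)Z(\omega)\|\le\gamma_2<\gamma$, contradicting (e). Your proofs of (e) and (f) are fine, but notice how the paper's Lyapunov-spectrum route through (f) is what sets up (g).

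Part (g) is where your proposal has a genuine gap. The ``propagation'' step you sketch---transporting a single orthogonal pair $(z_0,\tilde{z}_0)\in\operatorname{supp}(\nu)\times\operatorname{supp}(\nu^*)$ along the two semigroups to produce a positive-$(\nu\times\nu^*)$-measure set of orthogonal pairs---does not work as stated. The actions of $\mathcal{S}_\mu$ on the first factor and $\mathcal{S}_{\mu^*}$ on the second do not preserve the relation $\langle z,\tilde{z}\rangle=0$: the pairing that \emph{does} preserve orthogonality is $(z,\tilde{z})\mapsto(Az,(A^*)^{-1}\tilde{z})$, but $(A^*)^{-1}$ need not lie in $\mathcal{S}_{\mu^*}$ and need not preserve $\operatorname{supp}(\nu^*)$. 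Even granting some propagation, an orbit inside the support has no reason to carry positive measure. I do not see how to close (g) by soft compactness alone.

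The paper bypasses this entirely by going back to the dynamics. \autoref{lm:ss} gives
\[
\frac{1}{|\langle\widetilde{Z}(\omega),Z(\omega)\rangle|}=\lim_{n\to\infty}\frac{\|S_n(\omega)\|_2}{\|S_n(\omega)Z(\omega)\|_2}\quad\text{a.s.},
\]
and then the right-hand side is bounded: by (e) and Furstenberg--Kesten, for any $\epsilon>0$ one has $\|S_n(\omega)\|\le e^{n(\gamma+\epsilon)}$ and $\|S_n(\omega)Z(\omega)\|\ge e^{n(\gamma-\epsilon)}$ once $n$ is large, so the ratio is at most $e^{2n\epsilon}$; fixing such an $n$ yields the bound in \eqref{eq:uni.bnd.zz}. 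The conceptual point is that the uniform lower bound on $|\langle\widetilde{Z},Z\rangle|$ is a quantitative manifestation of the spectral gap $\gamma_2<\gamma$ (guaranteed by the contracting hypothesis), and the paper reads it off from growth rates rather than from any geometric transversality of the supports of $\nu$ and $\nu^*$.
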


\begin{proof}
\ref{zlm:tz} and \ref{zlm:z} follow immediately from the definitions of $\widetilde{Z}(\omega)$ and $Z(\omega)$. To prove \ref{zlm:nu}, note $A_1(\omega)$ and $Z(\omega)$ are independent, and for any bounded Borel function $f$ on $\mathcal{P}(\mathbb{R}^d)$, we have 
\begin{align*}
\iint f(A\cdot [x])\operatorname{d}\mu(A)\operatorname{d}\nu([x]) & = \int f(A_1(\omega)\cdot Z(\omega))\mathbb{P}(\operatorname{d}\omega)  & \text{(by definition of $\nu$)}\\
& = \int f(Z(\sigma\omega))\mathbb{P}(\operatorname{d}\omega) & \text{(by \ref{zlm:z})}\\
&= \int f(Z(\omega)) \mathbb{P}(\operatorname{d}\omega) & \text{(as $\sigma$ is measure-preserving)} \\
& = \int f([x])\operatorname{d}\nu([x]). & \text{(by definition of $\nu$)}
\end{align*}
Therefore $\nu$ is a $\mu$-invariant measure. \ref{zlm:nust} follows similarly.

For \ref{zlm:le}, we first notice
\[
\dfrac{1}{n}\log\|S_{n}(\omega)Z(\omega)\| = \dfrac{1}{n}\sum_{i=1}^n \log\|A_1(\sigma^{i-1}\omega)Z(\sigma^{i-1}\omega)\| \to \mathbb{E}[\log\|A_1(\omega)Z(\omega)\|]\quad \text{ a.s.} \ (n\to\infty),
\]
by Birkhoff's ergodic theorem assuming (FE1). Then notice that
\[
\mathbb{E}[\log\|A_1(\omega)Z(\omega)\|] = \iint\log\|A[x]\|\operatorname{d}\mu(A)\operatorname{d}\nu([x]) = \gamma,
\]
where the second equality is Furstenberg and Kifer's projective space formula for the Lyapunov exponent (see \cite[Theorem 2.1]{Furstenberg3} or \cite[Proposition III.7.2]{Bougerol}).

For \ref{zlm:nzero}, note if $|\langle \widetilde{Z}(\omega), Z(\omega) \rangle| = 0$, then by Oseledets' multiplicative ergodic theorem or \cite[Corollary VI.1.7]{Bougerol},
\[
\limsup_{n\to\infty}\dfrac{1}{n}\log\|S_{n}(\omega)Z(\omega)\| \le \gamma_2 < \gamma\quad \text{ a.s.},
\]
where $\gamma_2$ is the second Lyapunov exponent and $\gamma_2< \gamma$ follows from $\mu$ being contracting \cite[Proposition III.6.2]{Bougerol}. This contradicts with \ref{zlm:le}.

For \ref{zlm:fin.inv}, \autoref{lm:ss} gives
\[
\lim_{n\to\infty}\dfrac{\|S_{n}(\omega)\|_2}{\|S_{n}(\omega)Z(\omega)\|_2} = \dfrac{1}{|\langle \widetilde{Z}(\omega), Z(\omega)\rangle|} \quad \text{ a.s.}.
\]
Therefore, for $\epsilon > 0$, there exists an integer $N_1 > 0$ such that when $n > N_1$, we have
\[
\dfrac{1}{|\langle \widetilde{Z}(\omega), Z(\omega)\rangle|} \le \dfrac{\|S_{n}(\omega)\|_2}{\|S_{n}(\omega)Z(\omega)\|_2} + \epsilon\quad \text{ a.s.}.
\]
Also by \ref{zlm:le} and $\lim_{n\to\infty}(1/n)\log\|S_{n}(\omega)\|=\gamma\ \text{ a.s.}$, there exists an integer $N_2 > 0$ such that when $n > N_2$, we have
\[
\|S_{n}(\omega)\| \le e^{n(\gamma+\epsilon)},\quad \|S_{n}(\omega)Z(\omega)\| \ge e^{n(\gamma-\epsilon)}\quad \text{ a.s.}.
\]
Fix an $n > \max\{N_1, N_2\}$, then
\begin{equation}\label{eq:uni.bnd.zz}
\dfrac{1}{|\langle \widetilde{Z}(\omega), Z(\omega)\rangle|} \le e^{2n\epsilon}+\epsilon < \infty\quad \text{ a.s.}.
\end{equation}
\end{proof}

\begin{cor}\label{cor:ss}
Let $\mu$ be strongly irreducible and contracting, and satisfy (FE1) and (FE3), then for any fixed $n\ge 1$,
\[
\sup_m\dfrac{\|S_{m}(\sigma^{n}\omega)\|_2}{\|S_{m+n}(\omega)\|_2} < \infty \quad \text{ a.s.}.
\]
\end{cor}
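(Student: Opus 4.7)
The plan is to use the cocycle identity $S_{m+n}(\omega) = S_m(\sigma^n\omega)\, S_n(\omega)$ together with the invariance of the random direction $Z(\omega)$ under the action of $S_n(\omega)$, and then reduce the supremum to a quantity controlled by \autoref{lm:ss} and \autoref{lm:zlm}\ref{zlm:nzero}. The finite expectation conditions (FE1) and (FE3) enter only through \autoref{lm:zlm}, not through the geometry of the argument.

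First I would fix a unit vector $z(\omega)\in\mathbb{S}^{d-1}$ representing $Z(\omega)\in\mathcal{P}(\mathbb{R}^d)$. Iterating \autoref{lm:zlm}\ref{zlm:z} gives $S_n(\omega)\cdot Z(\omega) = Z(\sigma^n\omega)$ in $\mathcal{P}(\mathbb{R}^d)$, so choosing the appropriate unit representative $z(\sigma^n\omega)$ of $Z(\sigma^n\omega)$ we may write $S_n(\omega)\, z(\omega) = \pm\,\|S_n(\omega)\, z(\omega)\|_2\, z(\sigma^n\omega)$. Combining this with the cocycle identity yields, by choosing $z(\omega)$ as the test vector in the operator norm,
\[
\|S_{m+n}(\omega)\|_2 \,\geq\, \|S_{m+n}(\omega)\, z(\omega)\|_2 \,=\, \|S_n(\omega)\, Z(\omega)\|_2 \cdot \|S_m(\sigma^n\omega)\, Z(\sigma^n\omega)\|_2,
\]
where I use the notation convention from \Cref{sec:z} that $\|S_k(\cdot)\, Z(\cdot)\|_2$ is well-defined (independent of the choice of $\pm$ sign). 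This inequality gives the bound
\[
\frac{\|S_m(\sigma^n\omega)\|_2}{\|S_{m+n}(\omega)\|_2} \,\leq\, \frac{1}{\|S_n(\omega)\, Z(\omega)\|_2}\cdot\frac{\|S_m(\sigma^n\omega)\|_2}{\|S_m(\sigma^n\omega)\, Z(\sigma^n\omega)\|_2}.
\]

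Next I would apply \autoref{lm:ss} to the shifted environment $\sigma^n\omega$ with the (random) vector $Z(\sigma^n\omega)$, which yields almost surely
\[
\lim_{m\to\infty}\frac{\|S_m(\sigma^n\omega)\, Z(\sigma^n\omega)\|_2}{\|S_m(\sigma^n\omega)\|_2} \,=\, |\langle Z(\sigma^n\omega),\widetilde{Z}(\sigma^n\omega)\rangle|,
\]
and this limit is almost surely strictly positive by \autoref{lm:zlm}\ref{zlm:nzero} (applied to $\sigma^n\omega$, and using that $\sigma$ is measure-preserving). For the fixed $n$ in the statement, $\|S_n(\omega)\, Z(\omega)\|_2 > 0$ almost surely since each $A_i(\omega)$ is invertible. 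Hence the right-hand side of the displayed bound converges, almost surely as $m\to\infty$, to the finite quantity $1/\bigl(\|S_n(\omega)\, Z(\omega)\|_2\cdot |\langle Z(\sigma^n\omega),\widetilde{Z}(\sigma^n\omega)\rangle|\bigr)$.

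Finally, since each $A_i(\omega)$ is invertible, $S_{m+n}(\omega)$ is invertible for every $m$, so each term $\|S_m(\sigma^n\omega)\|_2/\|S_{m+n}(\omega)\|_2$ is a well-defined finite positive number. Combined with the convergence above, the sequence is bounded in $m$ almost surely, hence the supremum is almost surely finite. The main (mild) subtlety is the bookkeeping of $\pm$ signs in choosing representatives of $Z(\omega)$ and $Z(\sigma^n\omega)$; this is harmless because every quantity appearing in the bound is a norm or an absolute value.
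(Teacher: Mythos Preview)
Your proposal is correct and follows essentially the same route as the paper: both arguments bound $\|S_{m+n}(\omega)\|_2$ from below by $\|S_{m+n}(\omega)Z(\omega)\|_2$, invoke the cocycle identity together with the invariance of $Z$, apply \autoref{lm:ss} at the shifted point, and conclude finiteness via \autoref{lm:zlm}\ref{zlm:nzero}. The only cosmetic difference is that the paper leaves the vector $S_n(\omega)Z(\omega)$ unnormalised and then simplifies the resulting inner product using \autoref{lm:zlm}\ref{zlm:tz} (the $\widetilde{Z}$ side), whereas you normalise first and use \autoref{lm:zlm}\ref{zlm:z} (the $Z$ side); the two computations are dual and yield the same nonzero limit.
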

\begin{proof}
By \autoref{lm:ss} and \autoref{lm:zlm} \ref{zlm:tz},
\begin{align*}
\lim_{m\to\infty}\dfrac{\|S_{m+n}(\omega)Z(\omega)\|_2}{\|S_{m}(\sigma^{n}\omega)\|_2} 
& = |\langle \widetilde{Z}(\sigma^{n}\omega), S_{n}(\omega)Z(\omega) \rangle| \\
& = |\langle S_{n}^*(\omega)\widetilde{Z}(\sigma^{n}\omega), Z(\omega) \rangle| \\
& = \|S_{n}(\omega)\widetilde{Z}(\sigma^{n}\omega)\|_2\cdot|\langle \widetilde{Z}(\omega), Z(\omega)\rangle|\quad \text{ a.s.}.
\end{align*}
By \autoref{lm:zlm} \ref{zlm:nzero}, this limit is almost surely non-zero. Thus there exists $\delta>0$, such that when $m$ is sufficiently large, we have
\[
\dfrac{\|S_{m+n}(\omega)\|_2}{\|S_{m}(\sigma^{n}\omega)\|_2} \ge \dfrac{\|S_{m+n}(\omega)Z(\omega)\|_2}{\|S_{m}(\sigma^{n}\omega)\|_2} > \delta\quad \text{ a.s.}.
\]
The conclusion follows.
\end{proof}

Finally, we define similar random directions for the dual problem. On the same probability space $(\Omega,\mathcal{A},\mathbb{P}, \sigma)$, define $\mathfrak{S}_n(\omega):= A_n^*(\omega)\cdots. A_1^*(\omega)$, which is the partial product obtained from the i.i.d. sequence $\{A_n^*(\omega)\}_{n\ge 1}$ following the probability measure $\mu^*$. It is not hard to see that $\mathfrak{S}_n^*(\omega)\sim_dS_n(\omega)$ and $\mathfrak{S}_n(\omega)\sim_dS_{n}^*(\omega)$. This depends on the assumption that the sequence is i.i.d..

Just like the beginning of the section, one can define the random direction $\widetilde{Z}^*(\omega)$ from the product $\mathfrak{S}_{n}^*(\omega)$ as well as the random direction $Z^*(\omega)$ from the product $\mathfrak{S}_{n}(\sigma^{-n}\omega)$. Since $\mathfrak{S}_{n}^*(\omega)\sim_d S_{n}(\sigma^{-n}\omega)$, we have $\widetilde{Z}^*(\omega)\sim_d Z(\omega)$ by definition. Similarly, $Z^*(\omega)\sim_d \widetilde{Z}(\omega)$.

\section{Asymptotic Behaviour of $\varphi_n(\omega, Z(\omega))$}
\label{sec:varphi.z}

In this section, we plan to investigate the asymptotic behaviour of $\varphi_n(\omega, Z(\omega))$ specifying the matrix norm to be $\ell_2$-norm. Let $Z(\omega), \widetilde{Z}(\omega)$ be random directions defined in \Cref{sec:z}. Define
\begin{equation}\label{defn:xi}
\xi(\omega) := \dfrac{\langle \widetilde{Z}(\sigma\omega), B_1(\omega)Z(\omega) \rangle}{\langle \widetilde{Z}(\sigma\omega), A_1(\omega)Z(\omega)\rangle}.
\end{equation}
We first notice that since $\widetilde{Z}(\sigma\omega), Z(\omega)\in \mathcal{P}(\mathbb{R}^d)$ both simultaneously appear on the numerator and the denominator, $\xi(\omega)$ does not depend on the choice of the representatives.
Next we notice that the denominator (under absolute value) can be written as
\[
|\langle \widetilde{Z}(\sigma\omega), A_1(\omega)Z(\omega) \rangle| = 
|\langle A_1^*(\omega)\widetilde{Z}(\sigma\omega), Z(\omega) \rangle| = 
\|A_1^*(\omega)\widetilde{Z}(\sigma\omega)\|\cdot |\langle \widetilde{Z}(\omega),Z(\omega) \rangle|,
\]
by \autoref{lm:zlm} \ref{zlm:tz}. Note by \autoref{lm:zlm} \ref{zlm:nzero}, $|\langle \widetilde{Z}(\omega), Z(\omega)\rangle| \not = 0$ almost surely. Thus $\xi(\omega)$ is well-defined almost surely. Since $\widetilde{Z}(\sigma\omega), A(\omega)$ and $Z(\omega)$ are independent, by \autoref{lm:zlm} \ref{zlm:nu} \ref{zlm:nust}, we also have an integral formula for $\mathbb{E}[\xi(\omega)]$ as follows
\begin{equation}\label{eq:xi.int}
\mathbb{E}[\xi(\omega)] = \iiint \dfrac{\langle [y], B[x]\rangle}{\langle [y], A[x]\rangle}\operatorname{d}\mu(A, B)\operatorname{d}\nu([x])\operatorname{d}\nu^*([y]),
\end{equation}
where $\nu$ is a $\mu$-invariant measure and $\nu^*$ is a $\mu^*$-invariant measure. We remark here again that the integrand means $\langle y, Bx\rangle/ \langle y, Ax\rangle$ where $x, y$ are representatives of $[x], [y]\in \mathcal{P}(\mathbb{R}^d)$.

\begin{lemma}\label{lm:fe2}
If $\mathbb{E}[\|B_1(\omega)A_1^{-1}(\omega)\|]<\infty$, that is, $\mu$ satisfies (FE2), then $\mathbb{E}|\xi(\omega)|<\infty$.
\end{lemma}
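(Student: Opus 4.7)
The plan is to construct an almost-sure pointwise upper bound for $|\xi(\omega)|$ that is integrable under (FE2), and then take expectations. Working with normalised representatives $z(\omega), \widetilde{z}(\omega) \in \mathbb{S}^{d-1}$ of $Z(\omega), \widetilde{Z}(\omega) \in \mathcal{P}(\mathbb{R}^d)$, the first step is to simplify the denominator of $\xi(\omega)$ by invoking the equivariance $A_1(\omega)\cdot Z(\omega) = Z(\sigma\omega)$ from \autoref{lm:zlm}\ref{zlm:z}. This tells us that $A_1(\omega) z(\omega) = \pm\|A_1(\omega)z(\omega)\|\,z(\sigma\omega)$, hence
\[
|\langle\widetilde{z}(\sigma\omega),\, A_1(\omega)z(\omega)\rangle| \;=\; \|A_1(\omega)z(\omega)\|\cdot|\langle\widetilde{Z}(\sigma\omega),\, Z(\sigma\omega)\rangle|.
\]

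Next, for the numerator I will use the factorisation $B_1(\omega) = (B_1(\omega)A_1^{-1}(\omega))\,A_1(\omega)$, combined with Cauchy--Schwarz and submultiplicativity of the operator norm, to get
\[
|\langle\widetilde{z}(\sigma\omega),\, B_1(\omega)z(\omega)\rangle| \;\le\; \|B_1(\omega)z(\omega)\| \;\le\; \|B_1(\omega)A_1^{-1}(\omega)\|\cdot\|A_1(\omega)z(\omega)\|.
\]
Dividing cancels the factor $\|A_1(\omega)z(\omega)\|$ exactly, yielding the pointwise estimate
\[
|\xi(\omega)| \;\le\; \frac{\|B_1(\omega)A_1^{-1}(\omega)\|}{|\langle\widetilde{Z}(\sigma\omega),\, Z(\sigma\omega)\rangle|}\quad \text{ a.s.}.
\]

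To finish, I will apply \autoref{lm:zlm}\ref{zlm:fin.inv}, which provides a deterministic constant $C<\infty$ with $|\langle\widetilde{Z}(\omega), Z(\omega)\rangle|^{-1}\le C$ almost surely. By $\sigma$-invariance of $\mathbb{P}$, the same bound holds with $\omega$ replaced by $\sigma\omega$, so taking expectations gives
\[
\mathbb{E}|\xi(\omega)| \;\le\; C\cdot \mathbb{E}\|B_1(\omega)A_1^{-1}(\omega)\| \;<\; \infty
\]
by (FE2). There is no real obstacle in this argument: the whole point is that the combination $\widetilde{Z}(\sigma\omega)$ versus $A_1(\omega)Z(\omega)$ in the denominator is precisely designed so that the equivariance of $Z$ converts the denominator into $\|A_1(\omega)z(\omega)\|$ times the uniformly bounded-below quantity $|\langle\widetilde{Z}(\sigma\omega), Z(\sigma\omega)\rangle|$; once that observation is made, the proof reduces to a short chain of elementary inequalities plus one invocation each of \autoref{lm:zlm}\ref{zlm:z} and \ref{zlm:fin.inv}.
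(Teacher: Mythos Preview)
Your proof is correct and follows essentially the same approach as the paper's: both use the equivariance $A_1(\omega)\cdot Z(\omega)=Z(\sigma\omega)$ from \autoref{lm:zlm}\ref{zlm:z} to rewrite the denominator as $\|A_1(\omega)z(\omega)\|\cdot|\langle\widetilde{Z}(\sigma\omega),Z(\sigma\omega)\rangle|$, the factorisation $B_1=B_1A_1^{-1}A_1$ together with Cauchy--Schwarz on the numerator, and then \autoref{lm:zlm}\ref{zlm:fin.inv} to bound the remaining factor uniformly. The paper's argument is just a slightly more compressed version of what you have written.
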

\begin{proof}
By \autoref{lm:zlm} \ref{zlm:z}, we have
\[
|\xi(\omega)| \le \dfrac{\|B_1(\omega)A_1^{-1}(\omega)A_1(\omega)Z(\omega)\|}{\|A_1(\omega)Z(\omega)\|}\dfrac{1}{|\langle \widetilde{Z}(\sigma\omega),Z(\sigma\omega) \rangle|}
\le  \dfrac{\|B_1(\omega)A_1^{-1}(\omega)\|}{|\langle \widetilde{Z}(\sigma\omega),Z(\sigma\omega) \rangle|} .
\]
Thus 
\[
\mathbb{E}|\xi(\omega)| \le \mathbb{E}[\|B_1(\omega)A_1^{-1}(\omega)\|] \cdot \sup_{\omega\in\Omega}\left[\dfrac{1}{|\langle \widetilde{Z}(\sigma\omega),Z(\sigma\omega) \rangle|}\right] < \infty,
\]
by the assumption and \autoref{lm:zlm} \ref{zlm:fin.inv}.
\end{proof}

In the following proposition, we specify the matrix norms in the definition \eqref{defn:x} of $\varphi_n$ to be $\ell_2$-norms.
\begin{prop} \label{prop:varphi.z}
Assume $\mu$ is strongly irreducible and contracting, and satisfies (FE1) - (FE3). Then 
\[
\dfrac{\varphi_n(\omega, Z(\omega))}{n} \to \left|\mathbb{E}[\xi(\omega)]\right| \quad \text{ a.s.} \quad (n\to\infty)
\]
\end{prop}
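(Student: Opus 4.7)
The plan is to isolate a scalar ``main term'' whose growth captures the essential behaviour of $T_n Z/\|S_n Z\|_2$, and then bound the vector residual as lower order. Define
\[
\eta_n(\omega) := \frac{\langle \widetilde{Z}(\sigma^n\omega),\, T_n(\omega) Z(\omega)\rangle}{\langle \widetilde{Z}(\sigma^n\omega),\, S_n(\omega) Z(\omega)\rangle},
\]
which is almost surely well-defined because $S_n Z\parallel Z(\sigma^n\omega)$ by \autoref{lm:zlm}\ref{zlm:z} and $\langle \widetilde{Z}(\sigma^n\omega),Z(\sigma^n\omega)\rangle\ne 0$ a.s.\ by \autoref{lm:zlm}\ref{zlm:nzero}. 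Using the inductive formulas $T_n=B_nS_{n-1}+A_nT_{n-1}$ and $S_n=A_nS_{n-1}$, \autoref{lm:zlm}\ref{zlm:tz} (which, shifted by $n-1$, says $A_n^*(\omega)\widetilde{Z}(\sigma^n\omega)$ is a nonzero scalar multiple of $\widetilde{Z}(\sigma^{n-1}\omega)$, the scalar cancelling between numerator and denominator), and the parallelism $S_{n-1}Z\parallel Z(\sigma^{n-1}\omega)$, a short direct calculation gives the telescoping identity
\[
\eta_n(\omega) = \eta_{n-1}(\omega) + \xi(\sigma^{n-1}\omega), \qquad \eta_0=0,
\]
so $\eta_n(\omega) = \sum_{i=0}^{n-1}\xi(\sigma^i\omega)$.

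Since $\xi\in L^1$ by \autoref{lm:fe2}, Birkhoff's ergodic theorem (using that $\sigma$ is ergodic in the canonical i.i.d.\ shift setting) gives $\eta_n(\omega)/n \to \mathbb{E}[\xi(\omega)]$ almost surely, and hence $|\eta_n|/n \to |\mathbb{E}\xi|$. To pass from $|\eta_n|$ to $\varphi_n(\omega, Z(\omega))$, introduce the decomposition
\[
T_n(\omega) Z(\omega) = \eta_n(\omega)\, S_n(\omega) Z(\omega) + R_n(\omega), \qquad R_n(\omega) \perp \widetilde{Z}(\sigma^n\omega),
\]
which is well-defined by the very definition of $\eta_n$. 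The reverse triangle inequality gives $\bigl|\varphi_n-|\eta_n|\bigr|\le \|R_n\|_2/\|S_nZ\|_2$, so the proposition reduces to proving
\[
\|R_n\|_2/\bigl(n\,\|S_nZ\|_2\bigr)\to 0 \quad\text{a.s.}
\]

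To control $R_n$, iterate the recursion $R_n = A_nR_{n-1}+w_n$ with $w_n := (B_n-\xi(\sigma^{n-1}\omega)A_n)S_{n-1}Z$, producing $R_n = \sum_{i=1}^n S_{n-i}(\sigma^i\omega)w_i$, where each $w_i$ is perpendicular to $\widetilde{Z}(\sigma^i\omega)$ (this orthogonality is precisely what the formula for $\xi$ is arranged to enforce). A direct factorisation using $S_{i-1}Z\parallel Z(\sigma^{i-1}\omega)$ yields
\[
\|w_i\|_2 \le \|S_iZ\|_2\cdot\bigl(\|B_iA_i^{-1}\|_2+|\xi(\sigma^{i-1}\omega)|\bigr),
\]
with the second factor a stationary $L^1$ random variable by (FE2) and \autoref{lm:fe2}. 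The contracting and strongly irreducible hypotheses produce a strict Lyapunov gap $\gamma_2<\gamma$, and Oseledets' theorem then yields an exponential contraction $\|S_mv\|_2/\|S_m\|_2 \lesssim e^{-m(\gamma-\gamma_2-\epsilon)}$ uniformly for unit $v\perp \widetilde{Z}(\omega)$. Combined with $\|S_nZ\|_2\asymp\|S_n\|_2$ (\autoref{lm:ss} and \autoref{lm:zlm}\ref{zlm:fin.inv}), this gives a bound
\[
\frac{\|R_n\|_2}{\|S_nZ\|_2}\;\lesssim\;\sum_{i=1}^n e^{-(n-i)\delta}\,Y_i
\]
for $\delta=\gamma-\gamma_2-\epsilon>0$ and a stationary $L^1$ sequence $Y_i$. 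Since the dominating tail $\sum_{j\ge0}Y_{n-j}e^{-j\delta}$ is stationary and $L^1$, and any stationary $L^1$ sequence $X_n$ satisfies $X_n/n\to0$ a.s.\ (by $\sum_n \mathbb{P}(X_1>\epsilon n)<\infty$ and Borel--Cantelli), we conclude $\|R_n\|_2/\|S_nZ\|_2 = o(n)$ a.s., completing the proof.

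The main obstacle is this last estimate: upgrading the qualitative contraction ``$\|S_mv\|_2/\|S_m\|_2\to 0$ for $v\perp\widetilde{Z}$'' (implicit in the proof of \autoref{lm:ss}) to a quantitative exponential decay rate, uniform enough along the random sequence of shifts $(\sigma^i\omega)_{i\le n}$ to permit summation of $\|S_{n-i}(\sigma^i\omega)w_i\|_2/\|S_nZ\|_2$ over $i$. This is precisely where the contracting and strongly irreducible hypotheses are used essentially (to guarantee $\gamma_2<\gamma$), and where the finite expectation conditions (FE1)--(FE3) enter to tame the stationary factors $Y_i$.
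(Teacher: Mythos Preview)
Your scalar main term $\eta_n$ and its telescoping identity $\eta_n=\sum_{i=0}^{n-1}\xi(\sigma^i\omega)$ are exactly the computation the paper carries out in \eqref{eq:et.xi}, so that part is correct and matches the paper.

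The divergence, and the gap, is in how you control the residual $R_n$. You invoke an Oseledets-type bound $\|S_m v\|_2/\|S_m\|_2\lesssim e^{-m\delta}$ for $v\perp\widetilde Z(\omega)$, but the implicit constant (equivalently, the threshold $m>N(\omega)$ after which the exponential regime holds) depends on $\omega$. When you apply this at the shifted points $\sigma^i\omega$ for $i=1,\dots,n$, the constants $C(\sigma^i\omega)$ are not obviously controlled, so the clean bound $\sum_i e^{-(n-i)\delta}Y_i$ with stationary $L^1$ weights $Y_i$ does not follow. You flag this yourself as ``the main obstacle,'' and indeed it is not resolved: temperedness of Oseledets constants could perhaps rescue the argument, but that is real additional work, not a formality.

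The paper avoids this issue entirely by a different and much lighter device. Rather than bounding $R_m$ directly, it applies \emph{further} iterates $S_n(\sigma^m\omega)$ on the left: by \autoref{lm:ss} the ratio $\|S_n(\sigma^m\omega)T_m Z\|_2/\|S_n(\sigma^m\omega)S_m Z\|_2$ converges to $|\eta_m|$ as $n\to\infty$, and crucially this convergence is \emph{uniform in $m$}. Combined with the splitting $T_{m+n}=S_n(\sigma^m\omega)T_m+T_n(\sigma^m\omega)S_m$ from \eqref{eq:t.ind}, one gets $\varphi_{m+n}(\omega,Z)$ sandwiched between $|\eta_m|\pm\epsilon$ plus an error $\|T_n(\sigma^m\omega)Z(\sigma^m\omega)\|_2/\|S_n(\sigma^m\omega)Z(\sigma^m\omega)\|_2$, which for \emph{fixed} $n$ is bounded by $C_2(n)/C_0(n)$ via the crude estimates of \autoref{lm:st.bnd}. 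Fixing $n$ large and sending $m\to\infty$ then gives the result, with no appeal to Oseledets or to any quantitative contraction rate. In effect, the paper lets the forward dynamics ``project out'' your residual $R_m$ automatically via \autoref{lm:ss}, trading your exponential-sum estimate for a single fixed-$n$ error term that is trivially $o(m)$.
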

\begin{proof}
By \autoref{lm:ss}, we have
\begin{equation}\label{eq:stss.lim}
\lim_{n\to\infty}\dfrac{\|S_{n}(\sigma^{m}\omega)T_{m}(\omega)Z(\omega)\|_2}{\|S_{n}(\sigma^{m}\omega)S_{m}(\omega)Z(\omega)\|_2} 
=\left| \dfrac{\langle \widetilde{Z}(\sigma^{m}\omega), T_{m}(\omega)Z(\omega)\rangle}{\langle \widetilde{Z}(\sigma^{m}\omega), S_{m}(\omega)Z(\omega)\rangle}\right| \quad \text{ a.s.}.
\end{equation}
Since the convergence in \autoref{lm:ss} is uniform for all $x \not = 0$, the convergence in \eqref{eq:stss.lim} is also uniform for $m \ge 1$, that is, for $\epsilon > 0$, there exists an integer $N_1:= N_1(\epsilon)>0$, such that when $n > N_1$,
\begin{equation}\label{eq:stss.ineq}
\left|\dfrac{\langle \widetilde{Z}(\sigma^{m}\omega), T_{m}(\omega)Z(\omega)\rangle}{\langle \widetilde{Z}(\sigma^{m}\omega), S_{m}(\omega)Z(\omega)\rangle}\right| -\epsilon
\le \dfrac{\|S_{n}(\sigma^{m}\omega)T_{m}(\omega)Z(\omega)\|_2}{\|S_{n}(\sigma^{m}\omega)S_{m}(\omega)Z(\omega)\|_2} 
\le \left|\dfrac{\langle \widetilde{Z}(\sigma^{m}\omega), T_{m}(\omega)Z(\omega)\rangle}{\langle \widetilde{Z}(\sigma^{m}\omega), S_{m}(\omega)Z(\omega)\rangle}\right| +\epsilon.
\end{equation}

Note by \autoref{lm:zlm} \ref{zlm:tz} and \ref{zlm:z}
\begin{equation}\label{eq:et.xi}
\begin{aligned}
\dfrac{1}{m}\dfrac{\langle \widetilde{Z}(\sigma^{m}\omega), T_{m}(\omega)Z(\omega)\rangle}{\langle \widetilde{Z}(\sigma^{m}\omega), S_{m}(\omega)Z(\omega)\rangle} 
& = \dfrac{1}{m}\sum_{i=1}^m \dfrac{\langle \widetilde{Z}(\sigma^{m}\omega), S_{m-i}(\sigma^{i}\omega)B_i(\omega)S_{i-1}(\omega)Z(\omega)\rangle}{\langle \widetilde{Z}(\sigma^{m}\omega), S_{m-i}(\sigma^{i}\omega)A_i(\omega)S_{i-1}(\omega)Z(\omega)\rangle}\\
& = \dfrac{1}{m}\sum_{i=1}^m \dfrac{\langle S_{m-i}^*(\sigma^{i}\omega)\widetilde{Z}(\sigma^{m}\omega), B_i(\omega)Z(\sigma^{i-1}\omega)\rangle}{\langle S_{m-i}^*(\sigma^{i}\omega)\widetilde{Z}(\sigma^{m}\omega), A_{i}(\omega)Z(\sigma^{i-1}\omega)\rangle} \\
& = \dfrac{1}{m}\sum_{i=1}^m \dfrac{\langle \widetilde{Z}(\sigma^{i}\omega), B_1(\sigma^{i-1}\omega)Z(\sigma^{i-1}\omega) \rangle}{\langle \widetilde{Z}(\sigma^{i}\omega), A_1(\sigma^{i-1}\omega)Z(\sigma^{i-1}\omega) \rangle}\\
& \to \mathbb{E}[\xi(\omega)] \quad \text{ a.s.},
\end{aligned}
\end{equation}
as $m \to\infty$ by Birkhoff's ergodic theorem and \autoref{lm:fe2}. 
Consequently, for $\epsilon > 0$, one can find integer $M:= M(\epsilon) > 0$ such that when $m > M$,
\begin{equation}\label{eq:xi.bnd}
m\left(\left|\mathbb{E}[\xi(\omega)]\right| -\epsilon\right)
\le \left|\dfrac{\langle \widetilde{Z}(\sigma^{m}\omega), T_{m}(\omega)Z(\omega)\rangle}{\langle \widetilde{Z}(\sigma^{m}\omega), S_{m}(\omega)Z(\omega)\rangle}\right| 
\le m\left(\left|\mathbb{E}[\xi(\omega)]\right| +\epsilon\right)\quad \text{ a.s.}.
\end{equation}
Consequently, by \eqref{eq:t.ind} \eqref{eq:stss.ineq} \eqref{eq:xi.bnd},
\begin{align*}
\varphi_{n+m}(\omega, Z(\omega)) &= \dfrac{\|S_{n}(\sigma^{m}\omega)T_{m}(\omega)Z(\omega) + T_{n}(\sigma^{m}\omega)S_{m}(\omega)Z(\omega) \|_2}{\|S_{n+m}(\omega)Z(\omega)\|_2}\\
& \ge \dfrac{\|S_{n}(\sigma^{m}\omega)T_{m}(\omega)Z(\omega)\|_2}{\|S_{n+m}(\omega)Z(\omega)\|_2} - \dfrac{\|T_{n}(\sigma^{m}\omega)Z(\sigma^{m}\omega)\|_2}{\|S_{n}(\sigma^{m}\omega)Z(\sigma^{m}\omega)\|_2} \\
& \ge m\left(\left|\mathbb{E}[\xi(\omega)]\right| -\epsilon\right)-\epsilon - \dfrac{C_2(n)}{C_0(n)} \quad \text{ a.s.},
\end{align*}
where $C_0(n)>0$ and $C_2(n)$ are given by \autoref{lm:st.bnd} when $n>N_2$ for some integer $N_2$ assuming (FE1) - (FE3). By fixing an $n>\max\{N_1, N_2\}$ and letting $m\to\infty$, we have 
\begin{equation}\label{eq:liminf.varphi}
\liminf_{m\to\infty}\dfrac{\varphi_{n+m}(\omega, Z(\omega))}{m} \ge \left| \mathbb{E}[\xi(\omega)]\right| - \epsilon \quad \text{ a.s.}.
\end{equation}
Similarly, by \eqref{eq:stss.ineq} and \eqref{eq:xi.bnd}, when $n$ is sufficiently large,
\begin{align*}
\varphi_{n+m}(\omega, Z(\omega)) 
& \le \dfrac{\|S_{n}(\sigma^{m}\omega)T_{m}(\omega)Z(\omega)\|_2}{\|S_{n+m}(\omega)Z(\omega)\|_2} + \dfrac{\|T_{n}(\sigma^{m}\omega)Z(\sigma^{m}\omega)\|_2}{\|S_{n}(\sigma^{m}\omega)Z(\sigma^{m}\omega)\|_2} \\
& \le m\left(\left|\mathbb{E}[\xi(\omega)]\right| +\epsilon\right)+\epsilon + \dfrac{C_2(n)}{C_0(n)} \quad \text{ a.s.}.
\end{align*}
Thus 
\begin{equation}\label{eq:limsup.varphi}
\limsup_{m\to\infty}\dfrac{\varphi_{n+m}(\omega, Z(\omega))}{m} \le \left|\mathbb{E}[\xi(\omega)]\right|+\epsilon \quad \text{ a.s.}.
\end{equation}
Since $\epsilon>0$ is arbitrary, the conclusion follows by \eqref{eq:liminf.varphi} and \eqref{eq:limsup.varphi}.
\end{proof}

\begin{cor} \label{cor:t.le}
Assume $\mu$ is strongly irreducible and contracting, and satisfies (FE1) - (FE3). If $\mathbb{E}[\xi(\omega)] \not = 0$, then
\[
\lim_{n\to\infty}\dfrac{1}{n}\log\|T_{n}(\omega)Z(\omega)\| = \gamma \quad \text{ a.s.}.
\]
\end{cor}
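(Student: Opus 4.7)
The plan is to bootstrap from \autoref{prop:varphi.z} together with \autoref{lm:zlm}\ref{zlm:le}. The key identity is the trivial decomposition
\[
\frac{1}{n}\log\|T_n(\omega)Z(\omega)\|_2 = \frac{1}{n}\log\varphi_n(\omega, Z(\omega)) + \frac{1}{n}\log\|S_n(\omega)Z(\omega)\|_2,
\]
valid whenever $T_n(\omega)Z(\omega)\neq 0$ (which holds a.s.\ for all $n$ sufficiently large since the limit below is strictly positive). By \autoref{lm:zlm}\ref{zlm:le}, the second term converges a.s.\ to $\gamma$, so the entire proof reduces to showing the first term vanishes in the limit.

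For the first term, \autoref{prop:varphi.z} gives $\varphi_n(\omega, Z(\omega))/n\to|\mathbb{E}[\xi(\omega)]|$ a.s., and the hypothesis $\mathbb{E}[\xi(\omega)]\neq 0$ makes this limit a strictly positive constant. Hence a.s.\ for all large $n$ one has
\[
\tfrac{1}{2}\,n\,|\mathbb{E}[\xi(\omega)]|\;\le\; \varphi_n(\omega, Z(\omega))\;\le\; \tfrac{3}{2}\,n\,|\mathbb{E}[\xi(\omega)]|,
\]
from which $\log\varphi_n(\omega, Z(\omega)) = \log n + O(1)$ a.s., and therefore $n^{-1}\log\varphi_n(\omega, Z(\omega))\to 0$ a.s. Adding the two pieces gives the claim in the $\ell_2$-norm; the general case follows immediately from the equivalence of norms on $\mathbb{R}^d$, which only changes $\log\|T_n(\omega)Z(\omega)\|$ by an $O(1)$ additive term.

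The argument is essentially a one-line corollary once \autoref{prop:varphi.z} and \autoref{lm:zlm}\ref{zlm:le} are in hand; the only conceptual subtlety is the role of the non-degeneracy hypothesis $\mathbb{E}[\xi(\omega)]\neq 0$. Without it, $\varphi_n(\omega, Z(\omega))/n\to 0$ leaves open the possibility that $\varphi_n$ decays exponentially, in which case $n^{-1}\log\varphi_n$ could contribute a strictly negative quantity and the Lyapunov exponent of $\{T_n(\omega)Z(\omega)\}$ would fall below $\gamma$. So this assumption is not merely convenient but is precisely what forces the two Lyapunov exponents to coincide.
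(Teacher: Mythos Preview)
Your proof is correct and follows essentially the same approach as the paper: both arguments combine \autoref{prop:varphi.z} with \autoref{lm:zlm}\ref{zlm:le}, using the hypothesis $\mathbb{E}[\xi(\omega)]\neq 0$ to ensure the lower bound on $\varphi_n$ is strictly positive, and then invoke equivalence of norms. The only cosmetic difference is that the paper sandwiches $\|T_n(\omega)Z(\omega)\|_2$ directly between $n(|\mathbb{E}[\xi(\omega)]|\pm\epsilon)\|S_n(\omega)Z(\omega)\|_2$ before taking logarithms, whereas you take logarithms first and split additively; these are the same computation.
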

\begin{proof}
By \autoref{prop:varphi.z}, for $\epsilon>0$, when $n$ is sufficiently large, we have
\[
n (|\mathbb{E}[\xi(\omega)]| - \epsilon)\|S_{n}(\omega)Z(\omega)\|_2 \le \|T_{n}(\omega)Z(\omega)\|_2 \le n (|\mathbb{E}[\xi(\omega)]| + \epsilon)\|S_{n}(\omega)Z(\omega)\|_2.
\]
When $\mathbb{E}[\xi(\omega)]\not = 0$, we have $|\mathbb{E}[\xi(\omega)]|-\epsilon>0$ when $\epsilon$ is sufficiently small. Therefore, the conclusion follows from \autoref{lm:zlm} \ref{zlm:le} and the equivalence of $\ell_2$-norms with general matrix norms.
\end{proof}
We remark here that $\mathbb{E}[\xi(\omega)]\not = 0$ is needed for the conclusion to hold. A trivial example is when $B(\omega)\equiv 0$. In this case $T_{n}(\omega) \equiv 0$ and the limit in the corollary would be $-\infty$. Another less trivial example can be found in \Cref{sec:exmp} \autoref{exmp2}, in which $T_{n}(\omega)$ visits $0$ infinitely often almost surely.

Finally, note that we can define the corresponding $\xi^*(\omega)$ for the dual problem given by the i.i.d. sequence $\{A_n^*(\omega)\}_{n\ge1}$ following the probability measure $\mu^*$ and $B_n^*(\omega):= (B_n(\omega))^*$. By \eqref{defn:xi} and the discussion at the end of \Cref{sec:z}, we should let
\[
\xi^*(\omega) := \dfrac{\langle \widetilde{Z}^*(\sigma\omega), B_1^*(\omega)Z^*(\omega)\rangle}{\langle \widetilde{Z}^*(\sigma\omega), A_1^*(\omega)Z^*(\omega)\rangle} =  \dfrac{\langle B_1(\omega)\widetilde{Z}^*(\sigma\omega), Z^*(\omega)\rangle}{\langle A_1(\omega)\widetilde{Z}^*(\sigma\omega), Z^*(\omega)\rangle}.
\]
Note that $(Z^*(\omega), A(\omega), \widetilde{Z}^*(\sigma\omega))\sim_d(\widetilde{Z}(\sigma\omega), A(\omega), Z(\omega))$ by the definitions of these random directions, we have that $\xi(\omega)$ and $\xi^*(\omega)$ also follow the same law. Here we remind the readers again that this depends on the random products being i.i.d..

Let $\mathfrak{S}_{n}(\omega) := A_n^*(\omega)\cdots A_1^*(\omega)$ be the product in the dual problem as in last section, also inductively define $\mathfrak{T}_{n}(\omega):= A_n^*(\omega)\mathfrak{T}_{n-1}(\omega) + B_n^*(\omega)\mathfrak{S}_{n-1}(\omega)$, that is, $\mathfrak{T}_n(\omega)= \mathfrak{D}_n(\omega, A^*, B^*)$ is the first derived product associated to $(A^*, B^*)$. When $\mu$ is strongly irreducible and contracting, $\mu^*$ is also strongly irreducible and contracting. Thus by \autoref{prop:varphi.z}, we have
\[
\lim_{n\to\infty}\dfrac{1}{n}\dfrac{\|\mathfrak{T}_{n}(\omega)Z^*(\omega)\|_2}{\|\mathfrak{S}_{n}(\omega)Z^*(\omega)\|_2} = |\mathbb{E}[\xi^*(\omega)]| = |\mathbb{E}[\xi(\omega)]|\quad \text{ a.s.}.
\]
Note $(\mathfrak{S}_{n}(\omega), Z^*(\omega))\sim_d ( S_{n}^*(\omega), \widetilde{Z}(\sigma^{n}\omega))$, thus 
\[
\dfrac{\|\mathfrak{T}_{n}(\omega)Z^*(\omega)\|_2}{\|\mathfrak{S}_{n}(\omega)Z^*(\omega)\|_2}\sim_d\dfrac{\|T_{n}^*(\omega)\widetilde{Z}(\sigma^{n}\omega)\|_2}{\|S_{n}^*(\omega)\widetilde{Z}(\sigma^{n}\omega)\|_2},
\]
we have the following corollary immediately
\begin{cor}\label{cor:varphi.tz}
When $\mu$ is strongly irreducible and contracting, and satisfies (FE1) - (FE3). Then
\[
\lim_{n\to\infty}\dfrac{1}{n}\dfrac{\|T_{n}^*(\omega)\widetilde{Z}(\sigma^{n}\omega)\|_2}{\|S_{n}^*(\omega)\widetilde{Z}(\sigma^{n}\omega)\|_2} = |\mathbb{E}[\xi(\omega)]|\quad \text{ a.s.}.
\]
\end{cor}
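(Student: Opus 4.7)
The plan is to derive this corollary from \autoref{prop:varphi.z} applied to the dual i.i.d.\ system $\{A_n^*(\omega)\}_{n\ge 1}$ (law $\mu^*$) driven by $\{B_n^*(\omega)\}_{n\ge 1}$, and then transport the resulting almost sure limit to the target quantity via the distributional identifications assembled at the end of \Cref{sec:z}.

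First I would verify that the hypotheses of \autoref{prop:varphi.z} pass to the dual. Strong irreducibility and the contracting property are preserved under matrix transpose, as already noted before \autoref{lm:ss}. Conditions (FE1) and (FE3) coincide for $\mu$ and $\mu^*$ once the matrix norm is the transpose-invariant $\ell_2$ operator norm, with the general-norm version following by equivalence. The only delicate point is (FE2): the dual analogue is $\mathbb{E}[\|A_1^{-1}B_1\|_2]<\infty$ rather than $\mathbb{E}[\|B_1A_1^{-1}\|_2]<\infty$. However, (FE2) enters the proof of \autoref{prop:varphi.z} only through \autoref{lm:fe2} (to give $\mathbb{E}|\xi^*|<\infty$, which is automatic since $\xi^*\sim_d\xi$) and through \autoref{lm:st.bnd}(a); the latter can be reproved for $\|\mathfrak{T}_n\|_2$ using the elementary inequality $\|B_i\|_2/\|A_i\|_2\le \|B_iA_i^{-1}\|_2$ on the factorisation $\|\mathfrak{T}_n\|_2\le \prod_j\|A_j\|_2\cdot \sum_i\|B_i\|_2/\|A_i\|_2$, so that the original (FE2) for $\mu$ suffices.

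With hypotheses in hand, \autoref{prop:varphi.z} applied to the dual yields
\[
\lim_{n\to\infty}\dfrac{1}{n}\dfrac{\|\mathfrak{T}_n(\omega)Z^*(\omega)\|_2}{\|\mathfrak{S}_n(\omega)Z^*(\omega)\|_2} = |\mathbb{E}[\xi^*(\omega)]| = |\mathbb{E}[\xi(\omega)]|\quad\text{a.s.},
\]
with the second equality coming from $\xi^*\sim_d\xi$. To transfer this to the target, I would invoke, for each fixed $n$, the joint distributional identity
\[
\bigl(\mathfrak{S}_n(\omega),\mathfrak{T}_n(\omega),Z^*(\omega)\bigr) \sim_d \bigl(S_n^*(\omega),T_n^*(\omega),\widetilde{Z}(\sigma^n\omega)\bigr),
\]
which holds because $(S_n^*,T_n^*)$ is exactly $(\mathfrak{S}_n,\mathfrak{T}_n)$ read with the i.i.d.\ block $(A_1,B_1,\dots,A_n,B_n)$ in reverse index order (so the joint law is preserved by exchangeability), while $Z^*(\omega)$ and $\widetilde{Z}(\sigma^n\omega)$ share the common marginal law of $\widetilde{Z}$ and are both independent of the forward block.

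The principal obstacle is that the per-$n$ identity above, combined with a.s.\ convergence on the dual side, yields only convergence in distribution (equivalently, in probability, since the limit is the constant $|\mathbb{E}[\xi]|$), not the almost sure statement asserted. The identity does \emph{not} upgrade automatically to a joint-in-$n$ identity of the two sequences, because $(\mathfrak{S}_n)_n$ and $(S_n^*)_n$ are built from the same filtration by incompatible rules (one prepends $A_{n+1}^*$ on the left, the other appends on the right). The cleanest resolution I see is to bypass the transfer and instead rerun the argument of \autoref{prop:varphi.z} directly for the target: using \autoref{lm:zlm}\ref{zlm:tz} iterated, one has $S_n^*(\omega)\cdot[\widetilde{Z}(\sigma^n\omega)]=[\widetilde{Z}(\omega)]$, so $S_n^*(\omega)\widetilde{Z}(\sigma^n\omega)$ is a scalar multiple of $\widetilde{Z}(\omega)$; expanding $T_n^*(\omega)\widetilde{Z}(\sigma^n\omega)$ as a telescoping sum and applying \autoref{cor:sss} together with Birkhoff's theorem, in parallel with the original proof, should yield the limit $|\mathbb{E}[\xi(\omega)]|$ almost surely.
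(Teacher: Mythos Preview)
Your approach—apply \autoref{prop:varphi.z} to the dual system and transfer via the distributional identity $(\mathfrak{S}_n,\mathfrak{T}_n,Z^*)\sim_d(S_n^*,T_n^*,\widetilde{Z}\circ\sigma^n)$—is exactly the paper's argument, which occupies the paragraph immediately preceding the corollary. The paper simply asserts the conclusion ``immediately'' from this per-$n$ identity; it does not address either of the two points you raise.

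Your treatment of (FE2) is more careful than the paper's: the paper never comments on the fact that the dual (FE2) would read $\mathbb{E}\|A_1^{-1}B_1\|<\infty$, and your workaround (re-deriving the $\|\mathfrak{T}_n\|$ bound from $\|B_i\|/\|A_i\|\le\|B_iA_i^{-1}\|$, and noting that $\mathbb{E}|\xi^*|<\infty$ is automatic from $\xi^*\sim_d\xi$) is a clean fix.

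On the almost-sure transfer, you have put your finger on a genuine gap in the paper's argument. The per-$n$ identity you write down is obtained from the measure-preserving involution $\rho_n:(\rho_n\omega)_k=\omega_{n-1-k}$, which indeed sends $(\mathfrak{S}_n,\mathfrak{T}_n,Z^*)$ to $(S_n^*,T_n^*,\widetilde{Z}\circ\sigma^n)$; but $\rho_n$ varies with $n$, so there is no single transformation intertwining the two \emph{sequences}, and marginal equality in law does not propagate a.s.\ convergence (think $X_n=\mathbf 1_{[0,1/n]}(U)$ versus independent Bernoulli$(1/n)$'s). What survives is convergence in probability to the constant $|\mathbb{E}\xi|$. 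Your proposed repair—rerunning the argument of \autoref{prop:varphi.z} directly for $\|T_n^*\widetilde Z(\sigma^n\cdot)\|/\|S_n^*\widetilde Z(\sigma^n\cdot)\|$—is the right instinct, and the telescoping identity you allude to is literally \eqref{eq:et.xi} after moving the transpose across the inner product. The nontrivial missing piece is the analogue of \autoref{lm:ss} for the left-growing product $S_m^*(\omega)=A_1^*\cdots A_m^*$, needed to pass from the inner-product ratio to the norm ratio; this is not supplied by the paper's lemmas and is where the real work of a complete proof would sit.
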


\section{Inductive Formulas}
\label{sec:ind}
In this section, we start with giving some inductive formulas involving $x_m, y_m, \varphi_m$ and $\psi_m$ given a random vector $x:= x(\omega)\not = 0$. They will be frequently used in later sections. After that, we give an easy lemma about the well-definedness of $y_m$ by the information provided from $\varphi_m$. At the end of the section, we will digress to give some interesting results about the visits of $T_{n}(\omega)$ at $0$ in broad generalities, including a zero-one law as well as a surprising criterion using traces. These interesting results will not be used in the later part of the article.

\begin{lemma}\label{ind.lm}
Given a random vector $x:= x(\omega)\not= 0$, then for integers $n, m\ge 0$, we have
\begin{enumerate}[label = (\alph*)]
\item\label{lm:s}
$\|S_{n+m}(\omega)x\| = \|S_{n}(\sigma^m\omega)x_m\|\|S_{m}(\omega)x\|$; 

\item\label{lm:t} $\|T_{n+m}(\omega)x\| = 
\begin{cases}
\|S_{n}(\sigma^m\omega)y_m\varphi_m + T_{n}(\sigma^m\omega)x_m\|\|S_{m}(\omega)x\| & \varphi_m\not = 0,\\
\|T_{n}(\sigma^m\omega)x_m\|\|S_{m}(\omega)x\| & \varphi_m = 0;
\end{cases}$

\item\label{lm:varphi}
$\varphi_{n+m}(\omega,x) = 
\begin{cases}
\|S_{n}(\sigma^m\omega)y_m\varphi_m + T_{n}(\sigma^m\omega)x_m\|/\|S_{n}(\sigma^m\omega)x_m\| & \varphi_m \not= 0, \\
\|T_{n}(\sigma^m\omega)x_m\|/\|S_{n}(\sigma^m\omega)x_m\| & \varphi_m = 0;
\end{cases}$

\item\label{lm:x} 
$x_{n+m}(\omega, x) = \overline{S_{n}(\sigma^m\omega)x_m} = x_n(\sigma^m\omega, x_m(\omega, x))$; 

\item\label{lm:y} if $\varphi_{n+m} \not=0$, then
$y_{n+m}(\omega, x) = 
\begin{cases}
\overline{S_{n}(\sigma^m\omega)y_m\varphi_m + T_{n}(\sigma^m\omega)x_m} & \varphi_m\not=0 \\
\overline{T_{n}(\sigma^m\omega)x_m} & \varphi_m = 0.
\end{cases}$
\end{enumerate}
\end{lemma}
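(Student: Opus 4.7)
The plan is to derive all five identities from two facts already available: the cocycle formula $S_{n+m}(\omega) = S_n(\sigma^m\omega)S_m(\omega)$, which is immediate from the definition $S_n(\omega) = A_n(\omega)\cdots A_1(\omega)$, and the additive cocycle identity \eqref{eq:t.ind}, namely $T_{m+n}(\omega) = S_n(\sigma^m\omega)T_m(\omega) + T_n(\sigma^m\omega)S_m(\omega)$, derived in Section 2 by comparing the $(1,2)$-blocks of $\widetilde{S}_{n+m}(\omega)$. The rest is algebraic bookkeeping, built on the tautologies $S_m(\omega)x = \|S_m(\omega)x\|\, x_m$ and, when $\varphi_m \neq 0$, $T_m(\omega)x = \|S_m(\omega)x\|\, \varphi_m\, y_m$.

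First I would prove \ref{lm:s}: applying the cocycle formula gives $S_{n+m}(\omega)x = S_n(\sigma^m\omega)\bigl(\|S_m(\omega)x\|\, x_m\bigr) = \|S_m(\omega)x\|\, S_n(\sigma^m\omega)x_m$, and taking norms yields the product. Identity \ref{lm:x} is then immediate: dividing by $\|S_{n+m}(\omega)x\|$ and noting that $\|S_m(\omega)x\| > 0$ (which holds always, since each $A_i(\omega)$ is invertible) shows $x_{n+m}(\omega, x) = \overline{S_n(\sigma^m\omega)x_m}$; and the equality with $x_n(\sigma^m\omega, x_m(\omega,x))$ follows because $x_m$ is already a unit vector, so normalising $S_n(\sigma^m\omega)x_m$ is the definition of $x_n$ applied to the initial vector $x_m$.

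For \ref{lm:t}, I would substitute into \eqref{eq:t.ind}. In the case $\varphi_m \neq 0$ this gives
\[
T_{n+m}(\omega)x = \|S_m(\omega)x\|\bigl[S_n(\sigma^m\omega)\,\varphi_m y_m + T_n(\sigma^m\omega)x_m\bigr],
\]
and the norm identity follows by pulling out the positive scalar $\|S_m(\omega)x\|$. In the case $\varphi_m = 0$ one has $T_m(\omega)x = 0$, so the first summand in \eqref{eq:t.ind} vanishes and only $T_n(\sigma^m\omega)S_m(\omega)x = \|S_m(\omega)x\|\, T_n(\sigma^m\omega)x_m$ remains. Identity \ref{lm:varphi} is obtained by dividing \ref{lm:t} by \ref{lm:s}, and \ref{lm:y} by normalising the bracketed expression in the display above (respectively, normalising $T_n(\sigma^m\omega)x_m$ when $\varphi_m=0$), using the hypothesis $\varphi_{n+m}\neq 0$ to ensure the normalisation is legal.

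The only subtlety worth highlighting is the bifurcation at $\varphi_m = 0$: here $y_m$ is undefined, but the formula for $T_{n+m}(\omega)x$ degenerates in exactly the way the statement records, so splitting into the two cases is essential rather than cosmetic. Everything else is a substitution of the definitions in \eqref{defn:x}, and no analytic input is required.
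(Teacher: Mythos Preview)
Your proof is correct and follows essentially the same approach as the paper: both derive everything from the cocycle identity $S_{n+m}(\omega)=S_n(\sigma^m\omega)S_m(\omega)$ and the additive identity \eqref{eq:t.ind}, then substitute $S_m(\omega)x=\|S_m(\omega)x\|x_m$ and $T_m(\omega)x=\|S_m(\omega)x\|\varphi_m y_m$ before reading off parts \ref{lm:s}--\ref{lm:y}. The paper in fact only writes out the case $\varphi_m\neq 0$ and leaves the other to the reader, so your explicit handling of both cases is slightly more complete.
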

\begin{proof}
Note $y_m$ is well-defined only if $\|T_{m}(\omega)x\|\not= 0$, which is equivalent to $\varphi_m\not = 0$. Here we only prove the results when $\varphi_m \not = 0$. The case when $\varphi_m = 0$ can be proved similarly. 
\ref{lm:s} follows easily from $S_{n+m}(\omega)x = S_{n}(\sigma^m\omega)S_{m}(\omega)x$. By \eqref{eq:t.ind}, we have
\begin{equation}\label{eq:t.norm}
\begin{aligned}
T_{n+m}(\omega)x & = S_{n}(\sigma^m\omega)T_{m}(\omega)x + T_{n}(\sigma^m\omega)S_{m}(\omega)x \\
& = \left(S_{n}(\sigma^m\omega)\dfrac{T_{m}(\omega)x}{\|T_{m}(\omega)x\|}\dfrac{\|T_{m}(\omega)x\|}{\|S_{m}(\omega)x\|} + T_{n}(\sigma^m\omega)\dfrac{S_{m}(\omega)x}{\|S_{m}(\omega)x\|}\right)\|S_{m}(\omega)x\| \\
& = (S_{n}(\sigma^m\omega)y_m\varphi_m + T_{n}(\sigma^m\omega)x_m)\|S_{m}(\omega)x\|.
\end{aligned}
\end{equation}
This proves \ref{lm:t}. Then \ref{lm:varphi} and \ref{lm:x} follow from the definitions of $\varphi_{n+m}$ and $x_{n+m}$ respectively together with the formulas \ref{lm:s} and \ref{lm:t}. Finally, \ref{lm:y} follows from the definition of $y_{n+m}$ and the computation of $T_{n+m}(\omega)x$ above.
\end{proof}

When specifying the norm to be $\ell_2$-norm, one can define $\psi_m$ as in \eqref{defn:psi} by
\[
\psi_m(\omega, x):= \dfrac{\langle S_{m}(\omega)x, T_{m}(\omega)x\rangle}{\|S_{m}(\omega)x\|_2^2}.
\]
And the following lemma follows immediately from \eqref{eq:t.norm}.
\begin{lemma}\label{ind.lm:psi}
Given a random vector $x(\omega)\not = 0$, then for integers $n, m\ge 0$ with $\varphi_m\not = 0$, we have
\[
\psi_{n+m}(\omega, x) = \dfrac{\langle S_{n}(\sigma^{m}\omega)x_m, S_{n}(\sigma^{m}\omega)y_m \rangle}{\|S_{n}(\omega)x_m\|^2_2}\varphi_m + \psi_n(\omega, x_m).
\]
\end{lemma}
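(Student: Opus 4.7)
The plan is to derive the identity by direct substitution into the definition \eqref{defn:psi} of $\psi$, using the two key inductive formulas already established in \autoref{ind.lm}. Specifically, I will combine
\[
S_{n+m}(\omega)x = S_{n}(\sigma^{m}\omega)x_m\,\|S_{m}(\omega)x\|_2
\]
from parts \ref{lm:s} and \ref{lm:x}, together with
\[
T_{n+m}(\omega)x = \bigl(S_{n}(\sigma^{m}\omega)y_m\varphi_m + T_{n}(\sigma^{m}\omega)x_m\bigr)\|S_{m}(\omega)x\|_2,
\]
which is exactly the displayed equation \eqref{eq:t.norm} in the proof of \autoref{ind.lm}. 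The hypothesis $\varphi_m\neq 0$ is invoked solely to ensure that $y_m$ is well-defined, so that this last formula applies verbatim.

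The substitution step is then purely mechanical. Plugging both expressions into $\psi_{n+m}(\omega,x) = \langle S_{n+m}(\omega)x, T_{n+m}(\omega)x\rangle/\|S_{n+m}(\omega)x\|_2^{2}$, the scalar factor $\|S_{m}(\omega)x\|_2^{2}$ appears identically in the numerator and denominator and cancels. What remains is
\[
\psi_{n+m}(\omega,x) = \frac{\bigl\langle S_{n}(\sigma^{m}\omega)x_m,\; S_{n}(\sigma^{m}\omega)y_m\varphi_m + T_{n}(\sigma^{m}\omega)x_m\bigr\rangle}{\|S_{n}(\sigma^{m}\omega)x_m\|_2^{2}}.
\]
Using linearity of the inner product in its second slot and pulling out the scalar $\varphi_m$ splits this quotient into two pieces, the first being the $\varphi_m$-term of the claim and the second being $\psi_n(\sigma^{m}\omega, x_m)$ by the very definition of $\psi_n$.

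There is no genuine obstacle here: the lemma is a pure bookkeeping identity, exactly parallel to \autoref{ind.lm}\ref{lm:varphi} but phrased for the bilinear form $\langle S_n x, T_n x\rangle$ instead of the norm $\|T_n x\|$. The only point worth noting is that one should write out all factors carefully before cancelling, so as to track that the $\|S_{m}(\omega)x\|_2^{2}$ factors truly match. Once that is done, linearity finishes the proof in one line.
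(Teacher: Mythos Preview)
Your proposal is correct and matches the paper's own approach exactly: the paper simply states that the lemma ``follows immediately from \eqref{eq:t.norm},'' which is precisely the substitution you carry out. Your final term $\psi_n(\sigma^{m}\omega, x_m)$ is in fact the correct expression by the definition \eqref{defn:psi}; the $\psi_n(\omega, x_m)$ appearing in the lemma statement (and likewise the $\|S_n(\omega)x_m\|_2^2$ in the denominator) are evidently typos in the paper for the shifted versions.
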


As we have remarked after \eqref{defn:x}, $y_m$ is well-defined if and only if $\|T_{m}(\omega)x\|\not=0$. The following lemma gives sufficient information of the well-definedness of $y_m$ for the purpose of the paper.
\begin{lemma}\label{lm:wdef.y}
Given a random vector $x:= x(\omega)\not = 0$, if $\varphi_n(\omega, x)\to\infty$ almost surely as $n\to\infty$, then 
\[
\mathbb{P}(\|T_{n}(\omega)x\| = 0 \ \text{i.o.}) = 0.
\]
In particular, the sequence $\{y_n\}_{n \ge 1}$ defined by \eqref{defn:x} is well-defined.
\end{lemma}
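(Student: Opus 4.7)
The plan is to reduce the statement to the definition of divergence to $+\infty$, together with the observation that the denominator defining $\varphi_n$ never vanishes. First I would note that since each $A_i(\omega)$ lies in $GL(d,\mathbb{R})$ and $x(\omega)\neq 0$, the vector $S_n(\omega)x(\omega)$ is nonzero for every $n$ and every $\omega$. Hence $\varphi_n(\omega,x)=\|T_n(\omega)x\|/\|S_n(\omega)x\|$ is always well-defined as a nonnegative real number, and crucially $\varphi_n(\omega,x)=0$ if and only if $\|T_n(\omega)x\|=0$.

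Next, let $\Omega_0$ be the measurable set of full probability on which $\varphi_n(\omega,x)\to\infty$. For each fixed $\omega\in\Omega_0$, applying the definition of divergence to $+\infty$ with threshold $M=1$ produces an integer $N(\omega)$ such that $\varphi_n(\omega,x)>1$ for all $n>N(\omega)$, and in particular $\varphi_n(\omega,x)>0$. By the equivalence in the previous paragraph, $\|T_n(\omega)x\|>0$ for every $n>N(\omega)$, so the set of indices $n$ with $\|T_n(\omega)x\|=0$ is finite for each $\omega\in\Omega_0$. Consequently the event $\{\|T_n(\omega)x\|=0 \text{ i.o.}\}$ is contained in $\Omega\setminus\Omega_0$, which has probability zero. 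The ``in particular'' statement then follows since $y_n(\omega,x)=T_n(\omega)x/\|T_n(\omega)x\|$ is defined precisely when $\|T_n(\omega)x\|\neq 0$, so almost surely it is well-defined for all sufficiently large $n$.

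There is no genuine obstacle here: the content is essentially measure-theoretic bookkeeping, and the only step that needs verification is that $\varphi_n$ itself is well-defined, which reduces to the invertibility of each $A_i(\omega)$. The lemma is stated in this form primarily to provide a clean reference for later sections, where establishing $\varphi_n\to\infty$ almost surely (for instance through \autoref{prop:varphi.z} under the condition $\mathbb{E}[\xi(\omega)]\neq 0$) is the substantive step.
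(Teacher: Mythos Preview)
Your proof is correct and follows essentially the same approach as the paper: both rely on the equivalence $\varphi_n(\omega,x)=0 \iff \|T_n(\omega)x\|=0$ (since $S_n(\omega)x\neq 0$ by invertibility of the $A_i$), and then observe that $\varphi_n\to\infty$ a.s.\ precludes $\varphi_n$ visiting $0$ infinitely often. Your version is simply more explicit about the measure-theoretic details, whereas the paper condenses the argument into two sentences.
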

\begin{proof}
Note $\|T_{n}(\omega)x\| = 0$ if and only if $\varphi_n(\omega, x) = 0$. If $\|T_{n}(\omega)x\|$ visits $0$ infinitely often, then $\varphi_n(\omega, x)$ visits $0$ infinitely often, which contradicts to $\varphi_n(\omega, x)\to\infty$ almost surely.
\end{proof}

With the lemma and \autoref{prop:varphi.z}, we know that if $\mathbb{E}[\xi(\omega)]\not = 0$, then the corresponding $y_m(\omega, Z(\omega))$ for $x = Z(\omega)$ is almost surely well-defined for large $m$. Later on in this article, we will prove $\varphi_n(\omega, x)/n$ converges to a non-zero limit almost surely if $\mathbb{E}[\xi(\omega)]\not = 0$ in \autoref{thm:varphi.x}. This implies $y_m(\omega, x)$ is well-defined for a given deterministic $x\not = 0$.

For the rest of the section, we give two results which are not used in the later part of the article but are interesting on their own. We start with a zero-one law for $T_{n}(\omega) = 0$ infinitely often. Define $\tau_1(\omega) := \inf\{n \ge 1: T_{n}(\omega) = 0\}$, and $\tau_i(\omega) := \inf\{n\ge\tau_{i-1}(\omega): T_{n}(\omega) = 0\}$.
\begin{prop} If $\mathbb{P}(\tau_1(\omega) < \infty) = 1$ then $\mathbb{P}(T_{n}(\omega) = 0 \ \text{i.o.}) = 1$, otherwise $\mathbb{P}(T_{n}(\omega) = 0 \ \text{i.o.}) = 0.$
\end{prop}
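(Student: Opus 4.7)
The plan is to exploit the additive formula \eqref{eq:t.ind}, namely $T_{m+n}(\omega) = S_{n}(\sigma^m\omega)T_{m}(\omega) + T_{n}(\sigma^m\omega)S_{m}(\omega)$, which becomes especially clean at the zeros of $T_n$. Setting $p := \mathbb{P}(\tau_1(\omega) < \infty)$, the dichotomy will follow from showing $\mathbb{P}(\tau_k(\omega) < \infty) = p^k$ and reading off both cases.

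First I would observe: if $k = \tau_j(\omega)$ so that $T_{k}(\omega) = 0$, then for every $n \ge 1$,
\[
T_{k+n}(\omega) = S_{n}(\sigma^k\omega)\cdot 0 + T_{n}(\sigma^k\omega) S_{k}(\omega) = T_{n}(\sigma^k\omega) S_{k}(\omega).
\]
Since each $A_i(\omega) \in GL(d,\mathbb{R})$ by hypothesis, $S_{k}(\omega)$ is invertible, so $T_{k+n}(\omega) = 0$ if and only if $T_{n}(\sigma^k\omega) = 0$. Taking the infimum over $n \ge 1$, on the event $\{\tau_j(\omega) < \infty\}$ this yields the recursion
\[
\tau_{j+1}(\omega) = \tau_j(\omega) + \tau_1(\sigma^{\tau_j(\omega)}\omega).
\]

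Next I would invoke the strong Markov property for the i.i.d. sequence $\{(A_n(\omega), B_n(\omega))\}_{n\ge 1}$: each $\tau_j$ is a stopping time with respect to the natural filtration $\mathcal{F}_n := \sigma(A_1,B_1,\dots,A_n,B_n)$, and conditionally on $\{\tau_j < \infty\}$ the shifted sequence $\{(A_{\tau_j + n}, B_{\tau_j + n})\}_{n \ge 1}$ is independent of $\mathcal{F}_{\tau_j}$ and has the same law as the original. Hence $\tau_1(\sigma^{\tau_j(\omega)}\omega)$ has the same distribution as $\tau_1(\omega)$ and is independent of $\{\tau_j < \infty\}$. Inductively, this gives
\[
\mathbb{P}(\tau_{j+1} < \infty) = \mathbb{P}(\tau_j < \infty)\cdot \mathbb{P}(\tau_1 < \infty) = p^{j+1}.
\]

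From here the two cases close out immediately. If $p = 1$, then $\mathbb{P}(\tau_j < \infty) = 1$ for every $j$, and
\[
\mathbb{P}(T_n(\omega) = 0 \ \text{i.o.}) = \mathbb{P}\Bigl(\bigcap_{j\ge1}\{\tau_j<\infty\}\Bigr) = \lim_{j\to\infty}\mathbb{P}(\tau_j < \infty) = 1.
\]
If $p < 1$, the inclusion $\{T_n(\omega) = 0 \ \text{i.o.}\} \subseteq \{\tau_j(\omega) < \infty\}$ for every $j$ gives $\mathbb{P}(T_n = 0 \ \text{i.o.}) \le p^j \to 0$.

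The only delicate step is the strong Markov application, which requires confirming that $\tau_j$ is genuinely a stopping time (the event $\{\tau_j \le n\}$ is $\mathcal{F}_n$-measurable because $T_k$ depends only on $A_1,B_1,\dots,A_k,B_k$), and that the shifted sequence is independent of $\mathcal{F}_{\tau_j}$ with the original law — both standard for i.i.d. sequences indexed by a stopping time. Everything else is bookkeeping on top of the inductive formula \eqref{eq:t.ind}.
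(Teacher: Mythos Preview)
Your proof is correct and follows essentially the same route as the paper: both use the inductive formula \eqref{eq:t.ind} at a zero of $T$ together with the invertibility of $S_k$ to obtain $\mathbb{P}(\tau_j<\infty)=p^j$, and then read off the dichotomy. The only difference is cosmetic: for the case $p<1$ the paper passes through the expected number of visits and Borel--Cantelli, whereas your inclusion $\{T_n=0\ \text{i.o.}\}\subseteq\{\tau_j<\infty\}$ is a slightly more direct way to reach the same conclusion.
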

\begin{proof}
Noting $T_{\tau_1}(\omega) = 0$ and applying \eqref{eq:t.formula} gives
\begin{align*}
&\mathbb{P}(\tau_2(\omega) - \tau_1(\omega) = k | \tau_1(\omega)<\infty)\\
=& \mathbb{P}(T_{\tau_1+k}(\omega) = 0,\ T_{\tau_1+i}(\omega)\not = 0 \text{ for } 0 < i < k | \tau_1(\omega)<\infty) \\
=& \mathbb{P}(T_{k}(\sigma^{\tau_1}\omega)S_{\tau_1}(\omega) = 0, \ T_i(\sigma^{\tau_1}\omega)S_{\tau_1}(\omega) \not= 0 \text{ for } 0 < i < k|\tau_1(\omega)<\infty)\\
=& \mathbb{P}(T_k(\sigma^{\tau_1}\omega) = 0, \ T_i(\sigma^{\tau_1}\omega) \not = 0 \text{ for } 0 < i < k |\tau_1(\omega)<\infty) \\
=& \mathbb{P}(T_k(\omega) = 0,\ T_i(\omega) \not = 0 \text{ for } 0 < i < k)\\
= & \mathbb{P}(\tau_1(\omega) = k).
\end{align*}
This shows that 
\[
\mathbb{P}(\tau_2(\omega) < \infty) = \mathbb{P}(\tau_2(\omega) < \infty|\tau_1(\omega) < \infty)\mathbb{P}(\tau_1(\omega) < \infty) = \mathbb{P}(\tau_1(\omega) <\infty)^2.
\]
Similarly, in general we have $\mathbb{P}(\tau_j(\omega) < \infty) = \mathbb{P}(\tau_1(\omega) < \infty)^j$ for $j \ge 1$. 

If $\mathbb{P}(\tau_1(\omega) < \infty) = 1$, then $\mathbb{P}(\tau_j(\omega) < \infty) = 1$ for $j \ge 1$. Thus $\mathbb{P}(T_{n}(\omega) = 0 \ \text{i.o.}) = 1$.
If $\mathbb{P}(\tau_1(\omega) < \infty) < 1$, define
\[
V(\omega) = \sum_{n=1}^\infty\mathbf{1}_{\{T_{n}(\omega) = 0\}} = \sum_{m=1}^\infty \mathbf{1}_{\{\tau_m(\omega) < \infty\}}
\]
to be the number of times $T_{n}(\omega)$ taking $0$. Then by Fubini's theorem, 
\[
\sum_{n=1}^\infty \mathbb{P}(T_{n}(\omega) = 0) = \mathbb{E}[V(\omega)]=\sum_{m=1}^\infty \mathbb{P}(\tau_1(\omega) < \infty)^m < \infty.
\]
Then $\mathbb{P}(T_{n}(\omega) = 0 \ \text{i.o.}) = 0$ by the Borel-Cantelli lemma.
\end{proof}

Next we give an easy criterion for $\mathbb{P}(T_{n}(\omega) = 0 \ \text{i.o.}) = 0$. It is based on an interesting observation that $tr_n(\omega) := \operatorname{tr}(T_{n}(\omega)S_{n}^{-1}(\omega))$ is an i.i.d. random walk, where $\operatorname{tr}(\cdot)$ denotes the matrix trace. This criterion is also easier to verify than computing $\mathbb{E}[\xi(\omega)]$.
\begin{prop}
If $\mathbb{E}\left[\operatorname{tr}(B_1(\omega)A_1^{-1}(\omega))\right]\not = 0$, then $\mathbb{P}(T_{n}(\omega) = 0 \ \text{i.o.}) = 0.$
\end{prop}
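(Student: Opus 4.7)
The plan is to exploit the observation highlighted just before the statement: if we set $tr_n(\omega) := \operatorname{tr}(T_n(\omega) S_n^{-1}(\omega))$, then $tr_n$ is a random walk with i.i.d. increments $\operatorname{tr}(B_n(\omega) A_n^{-1}(\omega))$, whose common mean is nonzero by hypothesis. Since $T_n(\omega) = 0$ forces $tr_n(\omega) = 0$ (as $S_n(\omega)$ is invertible), the strong law of large numbers will rule out $tr_n = 0$ infinitely often.

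First I would derive the random-walk structure from the inductive formula \eqref{defn:t}. Writing
\[
T_n(\omega) S_n^{-1}(\omega) = \bigl(B_n(\omega) S_{n-1}(\omega) + A_n(\omega) T_{n-1}(\omega)\bigr)\bigl(A_n(\omega) S_{n-1}(\omega)\bigr)^{-1}
\]
and simplifying yields
\[
T_n(\omega) S_n^{-1}(\omega) = B_n(\omega) A_n^{-1}(\omega) + A_n(\omega)\, T_{n-1}(\omega) S_{n-1}^{-1}(\omega)\, A_n^{-1}(\omega).
\]
Applying $\operatorname{tr}(\cdot)$ and using the cyclic invariance of the trace gives
\[
tr_n(\omega) = tr_{n-1}(\omega) + \operatorname{tr}\bigl(B_n(\omega) A_n^{-1}(\omega)\bigr),
\]
so $tr_n(\omega) = \sum_{i=1}^{n} \operatorname{tr}(B_i(\omega) A_i^{-1}(\omega))$ is a random walk whose increments form an i.i.d.\ sequence, by the hypothesis that $\{(A_n(\omega), B_n(\omega))\}_{n \geq 1}$ is i.i.d.

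Second, by the strong law of large numbers,
\[
\frac{tr_n(\omega)}{n} \longrightarrow \mathbb{E}\bigl[\operatorname{tr}(B_1(\omega) A_1^{-1}(\omega))\bigr] \neq 0 \quad \text{a.s.}
\]
Hence $|tr_n(\omega)| \to \infty$ almost surely, and in particular there exists (almost surely) an integer $N(\omega)$ such that $tr_n(\omega) \neq 0$ for all $n \geq N(\omega)$. Finally, since $S_n(\omega)$ is invertible, the event $\{T_n(\omega) = 0\}$ is contained in $\{tr_n(\omega) = 0\}$, so
\[
\mathbb{P}(T_n(\omega) = 0 \ \text{i.o.}) \leq \mathbb{P}(tr_n(\omega) = 0 \ \text{i.o.}) = 0.
\]

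There is essentially no obstacle: the only subtlety is noticing the algebraic identity that makes $T_n S_n^{-1}$ telescope under the trace, which is handed to us by the hint immediately preceding the statement. One should just briefly verify that $\operatorname{tr}(B_1 A_1^{-1})$ is integrable so that the strong law applies; this is immediate if one assumes (FE2), since $|\operatorname{tr}(B_1 A_1^{-1})| \leq d \cdot \|B_1 A_1^{-1}\|_2$, but in fact the statement only requires the expectation to exist and be nonzero, which is exactly the hypothesis given.
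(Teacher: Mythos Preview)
Your proposal is correct and follows essentially the same route as the paper: both derive the identity $tr_n = tr_{n-1} + \operatorname{tr}(B_n A_n^{-1})$ from the inductive formula and cyclicity of the trace, then use the nonzero mean of the increments to conclude $tr_n = 0$ only finitely often, whence $T_n = 0$ only finitely often. The only cosmetic difference is that the paper phrases the last step as transience of a random walk with nonzero drift (citing \cite[Sec 4.2]{Durrett}) and then invokes Borel--Cantelli, whereas you invoke the strong law of large numbers directly to get $|tr_n|\to\infty$; these are equivalent here.
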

\begin{proof}
Note
\begin{align*}
T_{n+1}(\omega)S_{n+1}^{-1}(\omega) & = \left(A_{n+1}(\omega)T_{n}(\omega) + B_{n+1}(\omega)S_{n}(\omega)\right) S_{n}^{-1}(\omega) A_{n+1}^{-1}(\omega)\\
& = A_{n+1}(\omega)T_{n}(\omega)S_{n}^{-1}(\omega)A_{n+1}^{-1}(\omega) + B_{n+1}(\omega)A_{n+1}^{-1}(\omega).
\end{align*}
By taking the trace, we have
\[
tr_{n+1}(\omega) = tr_n(\omega) + \operatorname{tr}(B_{n+1}(\omega)A_{n+1}^{-1}(\omega)).
\]
This shows that $\{tr_n(\omega)\}_{n\ge 1}$ is an i.i.d. random walk with each jump following the distribution of $\operatorname{tr}(B_1(\omega)A_1^{-1}(\omega))$. When $\mathbb{E}[\operatorname{tr}(B_1(\omega)A_1^{-1}(\omega))] \not = 0$, the random walk is transient \cite[Sec 4.2]{Durrett}. Therefore $\mathbb{P}(tr_n(\omega) = 0 \ \text{i.o.}) = 0$. The conclusion follows by noticing $tr_n(\omega) = 0$ if $T_{n}(\omega) = 0$.
\end{proof}

\section{Limiting Behaviours of Angles}
\label{sec:angles}
In this section, our goal is to prove that given a random vector $x(\omega)\not = 0$, if $\varphi_n\to\infty$ almost surely as $n\to\infty$, then  $S_{n}(\omega)x(\omega)$ and $T_{n}(\omega)x(\omega)$ align to the same direction almost surely as $n\to\infty$.

For two non-zero vectors $x, y\in\mathbb{R}^d$, use $\delta(x, y)$ to denote the absolute value of sine of the angle between $x$ and $y$. That is, 
\begin{equation}\label{defn:delta}
\delta(x, y) := \left(1-\dfrac{\langle x, y \rangle^2}{\|x\|_2^2\|y\|_2^2}\right)^{1/2} = \dfrac{\|x\wedge y\|_2}{\|x\|_2\|y\|_2} \in [0, 1],
\end{equation}
where $x\wedge y \in \bigwedge^2(\mathbb{R}^2)$ the exterior power of $\mathbb{R}^d$, $\langle \cdot, \cdot \rangle$ denotes the usual inner product on $\mathbb{R}^d$ inducing the $\ell_2$-norm $\|\cdot\|_2$ of vectors, and $\|\cdot\|_2$ on $\bigwedge^2(\mathbb{R}^d)$ is given by
\[
\|x \wedge y\|_2^2 = \det\begin{pmatrix}
\langle x, x \rangle & \langle x, y \rangle \\
\langle y, x \rangle & \langle y, y \rangle
\end{pmatrix}.
\]
It is not hard to verify that $\delta$ reduces to a distance on $\mathcal{P}(\mathbb{R}^d)$ \cite[III.4.1]{Bougerol} and it is related to $\|\cdot\|_2$ on $\mathbb{S}^{d-1}$ in the sense of the following lemma.
\begin{lemma}\label{lm:delta.equiv}
For normalised vectors $\|x\|_2=\|y\|_2 = 1$, we have 
\[
\dfrac{1}{\sqrt{2}}\|x-\chi y\|_2 \le \delta(x, y) \le \|x-y\|_2,
\]
where $\chi = 1$ if $\langle x,y \rangle\ge0$, and $\chi = -1$ otherwise. 
\end{lemma}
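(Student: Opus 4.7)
The plan is to reduce everything to a single scalar: the inner product $c := \langle x, y \rangle \in [-1,1]$. Since $x, y$ are unit vectors, the three relevant quantities become
\[
\delta(x,y)^2 = 1 - c^2, \qquad \|x-y\|_2^2 = 2 - 2c, \qquad \|x+y\|_2^2 = 2 + 2c.
\]
The crucial identity I will exploit is
\[
4\,\delta(x,y)^2 = 4(1-c^2) = (2-2c)(2+2c) = \|x-y\|_2^2\,\|x+y\|_2^2,
\]
so that $2\,\delta(x,y) = \|x-y\|_2\,\|x+y\|_2$. Everything else follows from bounding one of the two factors.

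For the upper bound $\delta(x,y) \le \|x-y\|_2$, I simply observe $\|x+y\|_2 \le \|x\|_2 + \|y\|_2 = 2$ by the triangle inequality, so
\[
2\,\delta(x,y) = \|x-y\|_2\,\|x+y\|_2 \le 2\,\|x-y\|_2.
\]

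For the lower bound $\tfrac{1}{\sqrt{2}}\|x-\chi y\|_2 \le \delta(x,y)$, I would split on the sign of $c$. If $c \ge 0$ then $\chi = 1$ and $\|x+y\|_2^2 = 2 + 2c \ge 2$, hence $\|x+y\|_2 \ge \sqrt{2}$, which plugged into the identity gives $2\,\delta(x,y) \ge \sqrt{2}\,\|x-y\|_2 = \sqrt{2}\,\|x-\chi y\|_2$. If $c < 0$ then $\chi = -1$ and instead $\|x-y\|_2^2 = 2 - 2c \ge 2$, so $\|x-y\|_2 \ge \sqrt{2}$ and the identity yields $2\,\delta(x,y) \ge \sqrt{2}\,\|x+y\|_2 = \sqrt{2}\,\|x-\chi y\|_2$. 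In both cases we get the desired inequality.

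There is no real obstacle here; the only subtle point is recognising that the sign convention $\chi = \mathrm{sgn}\langle x, y\rangle$ is precisely what is needed to ensure that the factor we pick out in the product $\|x-y\|_2\,\|x+y\|_2$ is the smaller one (bounded above by $\sqrt{2}$), while the other factor — the one that appears as $\|x - \chi y\|_2$ — is the larger one (bounded below by, and compared against, itself). This is just the statement that the nearer of $\pm y$ to $x$ is the one with the same sign of inner product.
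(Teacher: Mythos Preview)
Your proof is correct and is essentially the same as the paper's. The paper writes the key identity as $\delta(x,y)^2 = \|x-\chi y\|_2^2\bigl(1 - \|x-\chi y\|_2^2/4\bigr)$, which is your factorisation $4\delta(x,y)^2 = \|x-y\|_2^2\,\|x+y\|_2^2$ after observing $4 - \|x-\chi y\|_2^2 = \|x+\chi y\|_2^2$; both arguments then bound the complementary factor by $2$ (upper bound) and by $\sqrt{2}$ (lower bound, using $\chi\langle x,y\rangle \ge 0$). One small slip in your closing commentary: $\|x-\chi y\|_2$ is the \emph{smaller} factor (at most $\sqrt{2}$), not the larger one --- but this does not affect the proof itself.
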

\begin{proof}
By \eqref{defn:delta} and $\|x-\chi y\|_2^2 = 2 - 2\chi\langle x, y\rangle$ if $|\chi| =1$, we have 
\[
\delta(x, y)^2 = 1 - \langle x,y \rangle^2 = \|x-\chi y\|_2^2 \left(1 - \dfrac{\|x-\chi y\|_2^2}{4}\right).
\]
This shows $\delta(x, y)\le \|x-y\|_2$. On the other hand, if $\chi$ satisfies the assumption in the statement then $\chi\langle x, y \rangle \ge 0$, thus $\|x - \chi y\|_2^2 \le 2$. The other inequality follows.
\end{proof}

For a matrix $A\in GL(d,\mathbb{R})$, let $\wedge^2 A$ be the automorphism on $\bigwedge^2(\mathbb{R}^d)$ given by
\[
(\wedge^2 A) (x \wedge y) = Ax \wedge Ay.
\]
Define $\|\wedge^2 A\|_2:= \sup\{\|(\wedge^2 A)(w)\|_2: w\in\bigwedge^2(\mathbb{R}^d), \|w\|_2 = 1\}$.

\begin{lemma}\label{lm:delta}
For integers $n, m \ge 0$ such that $\varphi_m\not = 0,\varphi_{m+n}\not =0$, we have
\begin{align*}
\delta(x_{n+m}, y_{n+m}) \le & \delta( S_{n}(\sigma^m\omega)x_m, T_{n}(\sigma^m\omega)x_m)\dfrac{\|T_{n}(\sigma^m\omega)x_m\|_2}{\|S_{n}(\sigma^m\omega)y_m\varphi_m + T_{n}(\sigma^m\omega)x_m\|_2} \\
&+ \delta( x_m, y_m) \dfrac{\varphi_m}{\|S_{n}(\sigma^m\omega)y_m\varphi_m + T_{n}(\sigma^m\omega)x_m\|_2}\dfrac{\|\wedge^2 S_{n}(\sigma^m\omega)\|_2}{\|S_{n}(\sigma^m\omega)x_m\|_2}
\end{align*}
\end{lemma}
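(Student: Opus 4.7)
The plan is to reduce the claim to a one-step identity in the exterior algebra $\bigwedge^2(\mathbb{R}^d)$, exploiting the formula $\delta(u,v)=\|u\wedge v\|_2/(\|u\|_2\|v\|_2)$ from \eqref{defn:delta}. Write $A:=S_n(\sigma^m\omega)$ and $B:=T_n(\sigma^m\omega)$ for brevity. By \autoref{ind.lm}\ref{lm:s}\ref{lm:t} we have $S_{n+m}(\omega)x = \|S_m(\omega)x\|\,Ax_m$ and $T_{n+m}(\omega)x = \|S_m(\omega)x\|\,(Ay_m\varphi_m + Bx_m)$, so the common scalar $\|S_m(\omega)x\|$ cancels in the quotient and
\[
\delta(x_{n+m},y_{n+m}) \;=\; \frac{\|Ax_m \wedge (Ay_m\varphi_m + Bx_m)\|_2}{\|Ax_m\|_2\,\|Ay_m\varphi_m + Bx_m\|_2}.
\]

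Next I would expand the numerator by bilinearity of $\wedge$:
\[
Ax_m\wedge(Ay_m\varphi_m+Bx_m)\;=\;\varphi_m\,(\wedge^2 A)(x_m\wedge y_m)\;+\;Ax_m\wedge Bx_m,
\]
then bound each piece. For the first term use the operator bound $\|(\wedge^2 A)(x_m\wedge y_m)\|_2\le\|\wedge^2 A\|_2\cdot\|x_m\wedge y_m\|_2$ together with the fact that $\|x_m\|_2=\|y_m\|_2=1$, which gives $\|x_m\wedge y_m\|_2=\delta(x_m,y_m)$. For the second term use the definition directly: $\|Ax_m\wedge Bx_m\|_2=\|Ax_m\|_2\|Bx_m\|_2\,\delta(Ax_m,Bx_m)$. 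Apply the triangle inequality and divide by $\|Ax_m\|_2\,\|Ay_m\varphi_m+Bx_m\|_2$; the desired bound falls out in exactly the stated form, with the first summand carrying the factor $\|Bx_m\|_2/\|Ay_m\varphi_m+Bx_m\|_2$ and the second carrying $\varphi_m\|\wedge^2 A\|_2/(\|Ax_m\|_2\|Ay_m\varphi_m+Bx_m\|_2)$.

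There is no substantive obstacle here: the hypothesis $\varphi_m\neq 0$ is used only to guarantee that $y_m$ is defined, and $\varphi_{n+m}\neq 0$ guarantees that the denominator $\|Ay_m\varphi_m+Bx_m\|_2$ is nonzero so that division is legal. The only thing to be careful about is the bilinear expansion and the correct use of the operator norm $\|\wedge^2 A\|_2$ on a simple bivector of unit coordinates (so that the factor $\delta(x_m,y_m)$ appears cleanly rather than some larger quantity). Everything else is a direct unwinding of \autoref{ind.lm}.
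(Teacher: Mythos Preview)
Your proof is correct and follows essentially the same route as the paper's own argument: both use the inductive formulas from \autoref{ind.lm} to rewrite $\delta(x_{n+m},y_{n+m})$ as a quotient involving $Ax_m\wedge(Ay_m\varphi_m+Bx_m)$, expand the wedge by bilinearity, bound the $(\wedge^2 A)(x_m\wedge y_m)$ piece via the operator norm and the $Ax_m\wedge Bx_m$ piece via the definition of $\delta$, and then divide through. Your remarks on how the hypotheses $\varphi_m\neq 0$ and $\varphi_{m+n}\neq 0$ enter are also exactly right.
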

\begin{proof}
By \eqref{defn:delta}, we have
\[
\delta(x_{n+m}, y_{n+m}) = \dfrac{\|x_{n+m}\wedge y_{n+m}\|_2}{\|x_{n+m}\|_2\|y_{n+m}\|_2}
\]
By \autoref{ind.lm}, we have
\begin{align*}
\delta(x_{n+m}, y_{n+m}) = \dfrac{\|S_{n}(\sigma^m\omega)x_m \wedge \left( S_{n}(\sigma^m\omega)y_m\varphi_m + T_{n}(\sigma^m\omega)x_m\right)\|_2}{\|S_{n}( \sigma^m\omega)x_m\|_2 \|S_{n}(\sigma^m\omega)y_m\varphi_m + T_{n}(\sigma^m\omega)x_m\|_2}.
\end{align*}
Now focusing on the numerator,
\begin{align*}
& \|S_{n}(\sigma^m\omega)x_m \wedge ( S_{n}(\sigma^m\omega)y_m\varphi_m + T_{n}(\sigma^m\omega)x_m)\|_2 \\
\le & \|S_{n}(\sigma^m\omega)x_m \wedge S_{n}(\sigma^m\omega)y_m\varphi_m\|_2 + \|S_{n}(\sigma^m\omega)x_m \wedge T_{n}(\sigma^m\omega)x_m\|_2 \\
\le & \|\wedge^2 S_{n}(\sigma^m\omega)\|_2\|x_m\wedge y_m\|_2\varphi_m + \|S_{n}(\sigma^m\omega)x_m \wedge T_{n}(\sigma^m\omega)x_m\|_2 \\
\le &\delta(x_m, y_m)\varphi_m  \|\wedge^2 S_{n}(\sigma^m\omega)\|_2 + \delta( S_{n}(\sigma^m\omega)x_m, T_{n}(\sigma^m\omega)x_m)\|S_{n}(\sigma^m\omega)x_m\|_2\|T_{n}(\sigma^m\omega)x_m\|_2.
\end{align*}
And the conclusion follows immediately.
\end{proof}

Here is another lemma from the classical results (see e.g. \cite[Sec III.6]{Bougerol}). 
\begin{lemma}\label{lm:wedges}
Assume $\mu$ is strongly irreducible and contracting, and satisfies (FE1), then for random vectors $x(\omega), y(\omega)\in\mathbb{R}^d$ satisfying $\langle \widetilde{Z}(\omega), x(\omega)\rangle\not = 0, \langle \widetilde{Z}(\omega), y(\omega)\rangle\not = 0\ \text{ a.s.}$,
\[
\lim_{n\to\infty}\dfrac{\|\wedge^2 S_{n}(\omega)\|_2}{\|S_{n}(\omega)x(\omega)\|_2\|S_{n}(\omega)y(\omega)\|_2} = 0 \quad \text{ a.s.}.
\]
The convergence is uniform in $x(\omega), y(\omega)$.
\end{lemma}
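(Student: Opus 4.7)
The plan is to reduce everything to the singular value analysis already performed in the proof of \autoref{lm:ss}. Write the polar decomposition $S_{n}(\omega) = U_n(\omega) D_n(\omega) V_n(\omega)$ with $D_n(\omega) = \operatorname{diag}(\alpha_1(n,\omega), \dots, \alpha_d(n,\omega))$ and $\alpha_1(n,\omega) \ge \cdots \ge \alpha_d(n,\omega) > 0$, so in particular $\|S_{n}(\omega)\|_2 = \alpha_1(n,\omega)$. The central identity is
\[
\|\wedge^2 S_{n}(\omega)\|_2 = \alpha_1(n,\omega)\,\alpha_2(n,\omega),
\]
which holds because $\wedge^2 S_{n}(\omega) = (\wedge^2 U_n)(\wedge^2 D_n)(\wedge^2 V_n)$, the operators $\wedge^2 U_n$ and $\wedge^2 V_n$ are isometries on $\bigwedge^2(\mathbb{R}^d)$, and $\wedge^2 D_n$ is diagonal with entries $\{\alpha_i(n,\omega)\,\alpha_j(n,\omega)\}_{i<j}$, whose maximum is $\alpha_1(n,\omega)\,\alpha_2(n,\omega)$.

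With this identity in hand, I factor the ratio as
\[
\dfrac{\|\wedge^2 S_{n}(\omega)\|_2}{\|S_{n}(\omega)x(\omega)\|_2\,\|S_{n}(\omega)y(\omega)\|_2} = \dfrac{\alpha_2(n,\omega)}{\alpha_1(n,\omega)} \cdot \dfrac{\|S_{n}(\omega)\|_2}{\|S_{n}(\omega)x(\omega)\|_2} \cdot \dfrac{\|S_{n}(\omega)\|_2}{\|S_{n}(\omega)y(\omega)\|_2}.
\]
By equation \eqref{eq:aa}, which was established in the proof of \autoref{lm:ss} under the strong irreducibility and contracting assumptions, the first factor converges to $0$ almost surely, and this convergence is independent of $x(\omega)$ and $y(\omega)$. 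Applying \autoref{lm:ss} to the second and third factors, they converge almost surely to $1/|\langle x(\omega),\widetilde{Z}(\omega)\rangle|$ and $1/|\langle y(\omega),\widetilde{Z}(\omega)\rangle|$ respectively, both of which are finite almost surely by the non-degeneracy hypothesis.

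Multiplying the three limits yields $0$ almost surely, giving the main statement. For the uniformity claim, note that the factor $\alpha_2(n,\omega)/\alpha_1(n,\omega)$ does not depend on $x(\omega)$ or $y(\omega)$ at all, while the uniform convergence of the other two factors is inherited directly from the uniformity statement in \autoref{lm:ss}. I do not anticipate a significant obstacle here: the proof is essentially linear-algebraic, with the only substantive input being the identification $\|\wedge^2 S_{n}\|_2 = \alpha_1(n,\omega)\,\alpha_2(n,\omega)$, after which the conclusion is just a matter of combining convergences already worked out in \Cref{sec:z}.
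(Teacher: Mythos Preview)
Your proof is correct and follows essentially the same approach as the paper: both identify $\|\wedge^2 S_n(\omega)\|_2 = \alpha_1(n,\omega)\alpha_2(n,\omega)$ from the polar decomposition, factor the ratio into $\alpha_2/\alpha_1$ times the two $\|S_n\|_2/\|S_n x\|_2$ factors, and conclude via \eqref{eq:aa} and \autoref{lm:ss}. Your write-up even adds a brief justification for the wedge-norm identity that the paper simply asserts.
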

\begin{proof}
Use the same notations as in the proof of \autoref{lm:ss}, have
\[
\|\wedge^2 S_{n}(\omega)\|_2 = \alpha_1(n,\omega)\alpha_2(n,\omega), \quad
\|S_{n}(\omega)\|_2^2 = \alpha_1^2(n,\omega).
\]
Then
\[
\lim_{n\to\infty} \dfrac{\|\wedge^2 S_{n}(\omega)\|_2}{\|S_{n}(\omega)\|_2^2} = \lim_{n\to\infty} \dfrac{\alpha_2(n,\omega)}{\alpha_1(n,\omega)} = 0 \quad \text{ a.s.},
\]
where the second equality follows from \eqref{eq:aa}. Now by the uniform convergence in \autoref{lm:ss}
\[
\dfrac{\|\wedge^2 S_{n}(\omega)\|_2}{\|S_{n}(\omega)x(\omega)\|_2\|S_{n}(\omega)y(\omega)\|_2} = \dfrac{\|\wedge^2 S_{n}(\omega)\|_2}{\|S_{n}(\omega)\|_2^2}\dfrac{\|S_{n}(\omega)\|_2}{\|S_{n}(\omega)x(\omega)\|_2}\dfrac{\|S_{n}(\omega)\|_2}{\|S_{n}(\omega)y(\omega)\|_2} \to 0 \ (n\to\infty)\quad \text{ a.s.}.
\]
\end{proof}

\begin{lemma} \label{lm:dom.varphi}
Assume $\mu$ satisfies (FE1)-(FE3) and that the sequence $\varphi_m\to\infty \ (m\to\infty)$ almost surely. Then there exists an integer $N>0$ such that whenever $n>N$, we have
\[
\lim_{m\to\infty}\dfrac{\|S_{n}(\sigma^{m}\omega)y_{m}\varphi_{m}+ T_{n}(\sigma^{m}\omega)x_{m}\|_2}{\varphi_{m}\|S_{n}(\sigma^{m}\omega)y_{m}\|_2 } = 1\quad \text{ a.s.}.
\]
\end{lemma}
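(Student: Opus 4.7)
The plan is to show that, when $\varphi_{m}$ blows up, the first summand $S_{n}(\sigma^{m}\omega)y_{m}\varphi_{m}$ in the numerator dwarfs the second summand $T_{n}(\sigma^{m}\omega)x_{m}$, so the ratio must converge to $1$. Concretely, I factor out $\varphi_{m}\|S_{n}(\sigma^{m}\omega)y_{m}\|_{2}$ from the numerator and write
\[
\dfrac{\|S_{n}(\sigma^{m}\omega)y_{m}\varphi_{m}+T_{n}(\sigma^{m}\omega)x_{m}\|_{2}}{\varphi_{m}\|S_{n}(\sigma^{m}\omega)y_{m}\|_{2}}
=\left\|\overline{S_{n}(\sigma^{m}\omega)y_{m}}+\dfrac{T_{n}(\sigma^{m}\omega)x_{m}}{\varphi_{m}\|S_{n}(\sigma^{m}\omega)y_{m}\|_{2}}\right\|_{2}.
\]
Since $\overline{S_{n}(\sigma^{m}\omega)y_{m}}$ is a unit vector, it suffices to prove that the second term tends to $0$ almost surely as $m\to\infty$, for a suitably large but fixed $n$.

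For the numerator of that second term, the uniform bound $\|T_{n}(\sigma^{m}\omega)x_{m}\|_{2}\le\|T_{n}(\sigma^{m}\omega)\|_{2}\cdot\|x_{m}\|_{2}\le C_{2}(n)$ comes straight from \autoref{lm:st.bnd}(a) together with (FE1) and (FE2), valid for all $n>N_{2}$ and all $m\ge 0$. For the denominator, since $y_{m}$ is a unit vector, \autoref{lm:st.bnd}(b) gives $\|S_{n}(\sigma^{m}\omega)y_{m}\|_{2}\ge C_{0}(n)>0$ uniformly in $m$ whenever $n>N_{3}$, using (FE1) and (FE3). Both bounds are $\omega$-independent (almost surely), so they absorb the randomness in $x_{m}$ and $y_{m}$ without trouble.

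Choosing any integer $N:=\max\{N_{2},N_{3}\}$ and fixing $n>N$, we therefore obtain
\[
\dfrac{\|T_{n}(\sigma^{m}\omega)x_{m}\|_{2}}{\varphi_{m}\|S_{n}(\sigma^{m}\omega)y_{m}\|_{2}}\le\dfrac{C_{2}(n)}{C_{0}(n)\,\varphi_{m}}\quad\text{a.s.},
\]
and the right-hand side tends to $0$ almost surely as $m\to\infty$ because, by hypothesis, $\varphi_{m}\to\infty$ almost surely while $n$, $C_{2}(n)$, $C_{0}(n)$ are now fixed. The conclusion then follows by continuity of the norm from the display at the start of the proof.

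The only mild obstacle is the well-definedness of $y_{m}$: the quantity $y_{m}$ is only defined when $\varphi_{m}\neq 0$, but \autoref{lm:wdef.y} guarantees that $\varphi_{m}\to\infty$ a.s.\ implies $\varphi_{m}\neq 0$ for all sufficiently large $m$ almost surely, so the whole argument kicks in on a full-measure set for $m$ past a (random) threshold, which is sufficient to conclude the limit as $m\to\infty$.
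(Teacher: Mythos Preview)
Your proof is correct and follows essentially the same approach as the paper: both reduce the statement to showing that $\|T_{n}(\sigma^{m}\omega)x_{m}\|_{2}/(\varphi_{m}\|S_{n}(\sigma^{m}\omega)y_{m}\|_{2})\to 0$ via the uniform bounds $C_{2}(n)$ and $C_{0}(n)$ from \autoref{lm:st.bnd} combined with $\varphi_{m}\to\infty$. The only cosmetic difference is that the paper expands the squared norm and handles the cross term, whereas you factor out $\varphi_{m}\|S_{n}(\sigma^{m}\omega)y_{m}\|_{2}$ directly and invoke continuity of the norm, which is slightly cleaner.
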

\begin{proof}
First notice that as $\varphi_m\to\infty$, by \autoref{lm:wdef.y}, $y_m$ is well-defined when $m$ is sufficiently large. Next
\begin{align*}
&\|S_{n}(\sigma^{m}\omega)y_{m}\varphi_{m}+ T_{n}(\sigma^{m}\omega)x_{m}\|_2^2 \\
= & \|S_{n}(\sigma^{m}\omega)y_{m}\|_2^2\varphi_{m}^2 + 2 \langle S_{n}(\sigma^{m}\omega)y_{m}, T_{n}(\sigma^{m}\omega)x_{m} \rangle \varphi_{m} + \|T_{n}(\sigma^{m}\omega)x_{m}\|_2^2,
\end{align*}
Thus
\begin{align*}
&\dfrac{\|S_{n}(\sigma^{m}\omega)y_{m}\varphi_{m}+ T_{n}(\sigma^{m}\omega)x_{m}\|_2^2}{\varphi_{m}^2\|S_{n}(\sigma^{m}\omega)y_{m}\|_2^2} \\
= & 1 + \dfrac{2 \langle S_{n}(\sigma^{m}\omega)y_{m}, T_{n}(\sigma^{m}\omega)x_{m} \rangle}{\varphi_{m}\|S_{n}(\sigma^{m}\omega)y_{m}\|_2^2} + \dfrac{\|T_{n}(\sigma^{m}\omega)x_{m}\|_2^2}{\varphi_{m}^2\|S_{n}(\sigma^{m}\omega)y_{m}\|_2^2}.
\end{align*}
Note
\[
\dfrac{|\langle S_{n}(\sigma^{m}\omega)y_{m}, T_{n}(\sigma^{m}\omega)x_{m}\rangle|}{\|S_{n}(\sigma^{m}\omega)y_{m}\|_2^2} \le \dfrac{\|T_{n}(\sigma^{m}\omega)x_{m}\|_2}{\|S_{n}(\sigma^{m}\omega)y_{m}\|_2}.
\]
Since $\varphi_{m}\to \infty$ as $m\to\infty$ almost surely, it is sufficient to prove that (for each $n$ that is sufficiently large)
\[
 \dfrac{\|T_{n}(\sigma^{m}\omega)x_{m}\|_2}{\|S_{n}(\sigma^{m}\omega)y_{m}\|_2} < C(n) < \infty \quad \text{ a.s.},
\]
where $C(n)$ is a bound depending only on $n$, not on $m$ or $\omega$. This follows by \autoref{lm:st.bnd}, that is, when $n$ is sufficiently large (not depending on $\omega, m, x_{m}$ or $y_{m}$),
\begin{align*}
\|T_{n}(\sigma^{m}\omega)x_{m}\|_2 & \le C_2(n),\\
\|S_{n}(\sigma^{m}\omega)y_{m}\|_2 & \ge C_0(n)>0.
\end{align*}
\end{proof}

\begin{thm} \label{thm:alignment}
Assume $\mu$ is strongly irreducible and contracting, and satisfies (FE1)-(FE3). Given a random vector $x(\omega)$ such that $\langle \widetilde{Z}(\omega),x(\omega) \rangle\not = 0$ and $\langle \widetilde{Z}(\sigma^{m}\omega), y_m\rangle\not = 0$ almost surely for all $m$ sufficiently large, if $\varphi_{n}(\omega, x(\omega))\to\infty \ \text{ a.s.}\ (n\to \infty)$, then
\[
\lim_{n\to\infty} \delta(S_{n}(\omega)x(\omega), \ T_{n}(\omega)x(\omega)) = 0\quad \text{ a.s.}.
\]
\end{thm}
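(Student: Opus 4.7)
The plan is to apply \autoref{lm:delta} with the decomposition $N = n+m$ and show that the resulting two-term upper bound on $\delta(x_{n+m}, y_{n+m})$ can be made arbitrarily small: the first summand is controlled via the hypothesis $\varphi_m \to \infty$, and the second via the contracting behaviour captured in \autoref{lm:wedges}. Since $\varphi_n \to \infty$ a.s., \autoref{lm:wdef.y} guarantees that $y_m$ is well-defined for all sufficiently large $m$, so the bound in \autoref{lm:delta} applies.

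First I would handle the coefficient of $\delta(S_n(\sigma^m\omega)x_m, T_n(\sigma^m\omega)x_m)$. Using $\delta(\cdot,\cdot) \le 1$, this contribution is at most $\|T_n(\sigma^m\omega)x_m\|_2 / \|S_n(\sigma^m\omega)y_m \varphi_m + T_n(\sigma^m\omega)x_m\|_2$. For $n$ fixed and sufficiently large, \autoref{lm:dom.varphi} makes the denominator asymptotically $\varphi_m \|S_n(\sigma^m\omega)y_m\|_2$ as $m \to \infty$, while \autoref{lm:st.bnd} yields $\|T_n(\sigma^m\omega)x_m\|_2 \le C_2(n)$ and $\|S_n(\sigma^m\omega)y_m\|_2 \ge C_0(n) > 0$. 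Thus this contribution is bounded by $(C_2(n)/C_0(n))/\varphi_m \cdot (1 + o(1))$ as $m \to \infty$, which vanishes because $\varphi_m \to \infty$.

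Next, for the second summand $\delta(x_m, y_m) A_2$, \autoref{lm:dom.varphi} also gives
\[
A_2 \;=\; \frac{\|\wedge^2 S_n(\sigma^m\omega)\|_2}{\|S_n(\sigma^m\omega)x_m\|_2 \, \|S_n(\sigma^m\omega)y_m\|_2}\bigl(1 + o(1)\bigr)\qquad (m \to \infty).
\]
Setting $L := \limsup_{N\to\infty} \delta(x_N, y_N) \in [0, 1]$ and using $\limsup(a_m b_m) \le (\limsup a_m)(\limsup b_m)$ for nonnegative sequences, the bound from \autoref{lm:delta} yields, for every fixed sufficiently large $n$,
\[
L \;\le\; L \cdot \limsup_{m\to\infty} \frac{\|\wedge^2 S_n(\sigma^m\omega)\|_2}{\|S_n(\sigma^m\omega)x_m\|_2 \, \|S_n(\sigma^m\omega)y_m\|_2}.
\]
Since $L \in [0, 1]$, it then suffices to show that the limsup on the right can be made strictly less than $1$ by taking $n$ large, which forces $L = 0$ and delivers the conclusion.

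The hard part is controlling this wedge-ratio limsup. I would apply \autoref{lm:wedges} at the shifted sample point $\sigma^m\omega$ with varying vectors $x_m, y_m$; the non-degeneracy hypotheses $\langle \widetilde{Z}(\omega), x(\omega)\rangle \ne 0$ and $\langle \widetilde{Z}(\sigma^m\omega), y_m\rangle \ne 0$ (together with \autoref{lm:zlm}\ref{zlm:tz}, which propagates the former to $\langle \widetilde{Z}(\sigma^m\omega), x_m\rangle \ne 0$ via $S_m^*(\omega)\widetilde{Z}(\sigma^m\omega) \parallel \widetilde{Z}(\omega)$) are precisely what lets the lemma apply. The uniformity of the convergence in \autoref{lm:ss} in the initial vector $v$ then rewrites $\|S_n(\sigma^m\omega)v\|_2 / \|S_n(\sigma^m\omega)\|_2$ in terms of $|\langle \widetilde{Z}(\sigma^m\omega), v\rangle|$ and reduces the wedge ratio essentially to $\alpha_2(n,\sigma^m\omega)/\alpha_1(n,\sigma^m\omega)$ divided by two nondegenerate scalar products. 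The careful joint handling of the $n$ and $m$ asymptotics, exploiting stationarity of $\sigma$ so that the Oseledets-type decay $\alpha_2/\alpha_1 \to 0$ dominates uniformly enough along the orbit, is where the real work of the proof lies.
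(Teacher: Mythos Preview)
Your proposal correctly invokes all the same ingredients as the paper (\autoref{lm:delta}, \autoref{lm:dom.varphi}, \autoref{lm:st.bnd}, \autoref{lm:wedges}), and your treatment of the first summand via $\varphi_m\to\infty$ is exactly right. The divergence from the paper is in how you close the argument: you fix $n$, send $m\to\infty$, and aim for the self-referential inequality $L\le L\cdot\limsup_{m}A_2$, hoping to force $L=0$ by making $\limsup_m A_2<1$ for $n$ large. This last step is a genuine gap. For fixed $n$, the quantity
\[
\frac{\|\wedge^2 S_n(\sigma^m\omega)\|_2}{\|S_n(\sigma^m\omega)x_m\|_2\,\|S_n(\sigma^m\omega)y_m\|_2}
\]
is, up to the vector-dependent denominator, governed by $\alpha_2(n,\sigma^m\omega)/\alpha_1(n,\sigma^m\omega)$. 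By ergodicity, $\limsup_m g(\sigma^m\omega)=\operatorname{ess\,sup}_\omega g(\omega)$ almost surely for any measurable $g$, and there is no reason for the essential supremum of $\alpha_2/\alpha_1(n,\cdot)$ to be small (for instance, if $\mu$ charges a neighbourhood of a conformal matrix, this ratio can be arbitrarily close to $1$ with positive probability for every $n$). Worse, the denominator carries $|\langle\widetilde Z(\sigma^m\omega),y_m\rangle|$, which is a.s.\ nonzero by hypothesis but has no uniform lower bound in $m$, so the limsup over $m$ can be $+\infty$. Your suggested remedy---``stationarity so that the Oseledets decay dominates uniformly along the orbit''---therefore cannot work as stated: stationarity gives you the essential supremum, not a small bound.

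The paper sidesteps this by treating the two summands \emph{asymmetrically}: the first via $m\to\infty$ with $n$ fixed (exactly as you do), but the second via $n\to\infty$, applying \autoref{lm:wedges} directly at the shifted point $\sigma^m\omega$ with the vectors $x_m,y_m$ (the uniformity in the vector is what makes the nondegeneracy hypotheses suffice). The two one-sided statements are then combined through an ``infinitely often'' probability inequality rather than a single $\limsup$. If you want to repair your argument, the cleanest route is to abandon the $L\le Lc$ device and instead run the two limits in different directions as the paper does.
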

\begin{proof}
By \autoref{lm:delta} and the fact that $\delta$ takes values in $[0,1]$, we have
\begin{align*}
\delta(x_{n+m}, y_{n+m}) \le & \dfrac{\|T_{n}(\sigma^m\omega)x_m\|_2}{\|S_{n}(\sigma^m\omega)y_m\varphi_m + T_{n}(\sigma^m\omega)x_m\|_2} \\
&+  \dfrac{\varphi_m\|S_{n}(\sigma^{m}\omega)y_m\|_2}{\|S_{n}(\sigma^m\omega)y_m\varphi_m + T_{n}(\sigma^m\omega)x_m\|_2}\dfrac{\|\wedge^2 S_{n}(\sigma^m\omega)\|_2}{\|S_{n}(\sigma^m\omega)x_m\|_2\|S_{n}(\sigma^{m}\omega)y_m\|_2}.
\end{align*}
Note $\langle \widetilde{Z}(\omega), x(\omega)\rangle \not= 0$ implies $\langle \widetilde{Z}(\sigma^{m}\omega), x_m\rangle\not = 0$ by \autoref{lm:zlm} \ref{zlm:tz}. And there exists an integer $M_1>0$ such that when $m>M_1$, $\langle \widetilde{Z}(\sigma^{m}\omega), y_m \rangle\not = 0\ \text{ a.s.}$. Thus we can apply \autoref{lm:wedges} to $x_m$ and $y_m$, and for $\epsilon>0$,
\begin{equation}\label{eq:term22}
\begin{aligned}
\mathbb{P}\left(\dfrac{\|\wedge^2 S_{n}(\sigma^m\omega)\|_2}{\|S_{n}(\sigma^m\omega)x_m\|_2\|S_{n}(\sigma^{m}\omega)y_m\|_2} \ge  \epsilon\quad \text{i.o. for }n\ge 1\right) =0
\end{aligned}
\end{equation}

By \autoref{lm:dom.varphi}, there exists an integer $N_1 > 0$ such that when $n > N_1$ we have
\[
\lim_{m\to\infty}\dfrac{\|S_{n}(\sigma^{m}\omega)y_{m}\varphi_{m}+ T_{n}(\sigma^{m}\omega)x_{m}\|_2}{\varphi_{m}\|S_{n}(\sigma^{m}\omega)y_{m}\|_2 } = 1\quad \text{ a.s.}.
\]
Thus for $\epsilon>0$, there exists an integer $M_2 := M_2(\epsilon, n)$ such that when $m > M_2$, we have
\begin{equation}\label{eq:term21}
1-\epsilon\le\dfrac{\|S_{n}(\sigma^{m}\omega)y_{m}\varphi_{m}+ T_{n}(\sigma^{m}\omega)x_{m}\|_2}{\varphi_m\|S_{n}(\sigma^{m}\omega)y_m\|} \le 1+\epsilon \quad \text{ a.s.}.
\end{equation}
Thus
\[
\dfrac{\|T_{n}(\sigma^m\omega)x_m\|_2}{\|S_{n}(\sigma^m\omega)y_m\varphi_m + T_{n}(\sigma^m\omega)x_m\|_2} \le \dfrac{\|T_{n}(\sigma^{m}\omega)x_m\|}{(1-\epsilon)\varphi_m\|S_{n}(\sigma^{m}\omega)x_m\|} \le \dfrac{C_2(n)}{(1-\epsilon)\varphi_mC_0(n)}.
\]
Since $\varphi_m\to\infty$ as $m\to\infty$, thus for any fixed $n > N_1$, we have
\begin{equation}\label{eq:term1}
\mathbb{P}\left(\dfrac{\|T_{n}(\sigma^m\omega)x_m\|_2}{\|S_{n}(\sigma^m\omega)y_m\varphi_m + T_{n}(\sigma^m\omega)x_m\|_2} \ge \epsilon \quad \text{i.o. for } m\ge 1 \right) = 0.
\end{equation}
Finally, by \eqref{eq:term1} \eqref{eq:term21} and \eqref{eq:term22},
\begin{align*}
& \mathbb{P}(\delta(x_n, y_n) \ge \epsilon\quad \text{i.o.})\\
 \le &  \mathbb{P}\left(\dfrac{\|T_{n}(\sigma^m\omega)x_m\|_2}{\|S_{n}(\sigma^m\omega)y_m\varphi_m + T_{n}(\sigma^m\omega)x_m\|_2} \ge \dfrac{\epsilon}{2} \quad \text{i.o. for } m\ge 1 \right)  \\
& + \mathbb{P}\left(\dfrac{\|\wedge^2 S_{n}(\sigma^{m}\omega)\|_2}{\|S_{n}(\sigma^{m}\omega)x_m\|_2\|S_{n}(\sigma^{m}\omega)y_m\|_2} \ge \dfrac{\epsilon(1-\epsilon)}{2} \quad \text{i.o. for }n\ge 1\right) \\
& = 0.
\end{align*}
This shows that $\delta(x_n, y_n)$ converges to 0 almost surely.
\end{proof}

\begin{cor}\label{cor:tz}
Assume $\mu$ is strongly irreducible and contracting, and satisfies (FE1)-(FE3). Assume $\mathbb{E}[\xi(\omega)]\not = 0$, then 
\begin{enumerate}[label = (\alph*)]
\item \label{tz:tz}
$T_{n}(\sigma^{-n}\omega)\cdot Z(\sigma^{-n}\omega)$ converges to $Z(\omega)$ almost surely as $n\to\infty$;
\item \label{tz:ttz}
$T_{n}^*(\omega)\cdot \widetilde{Z}(\sigma^{n}\omega)$ converges to $\widetilde{Z}(\omega)$ almost surely as $n\to\infty$;
\end{enumerate}
\end{cor}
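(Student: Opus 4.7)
The plan is to obtain both parts as applications of \autoref{thm:alignment}, converting the ``alignment of $T_n$ with $S_n$'' that it produces into the stated alignment with $Z$ (resp.\ $\widetilde{Z}$) via the cocycle identities $S_n(\omega)\cdot Z(\omega)=Z(\sigma^n\omega)$ and $S_n^*(\omega)\cdot\widetilde{Z}(\sigma^n\omega)=\widetilde{Z}(\omega)$, both of which follow by iterating \autoref{lm:zlm}\ref{zlm:z} and \ref{zlm:tz}.

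For part (a), I would take $x(\omega)=Z(\omega)$ in \autoref{thm:alignment}. The growth hypothesis $\varphi_n(\omega,Z(\omega))\to\infty$ a.s.\ is immediate from \autoref{prop:varphi.z}, since $\varphi_n(\omega,Z(\omega))/n\to|\mathbb{E}[\xi(\omega)]|>0$. The non-orthogonality $\langle\widetilde{Z}(\omega),Z(\omega)\rangle\neq 0$ a.s.\ is \autoref{lm:zlm}\ref{zlm:nzero}, while the analogous condition on $y_m(\omega,Z(\omega))$ requires a short bootstrap: it will follow from the alignment one is trying to prove, so it can be verified on a set on which $y_m$ is close in direction to $Z(\sigma^m\omega)$. \autoref{thm:alignment} then yields $\delta(S_n(\omega)Z(\omega),T_n(\omega)Z(\omega))\to 0$ a.s. I would then re-run the proof with $\omega$ replaced by $\sigma^{-n}\omega$ throughout: the ingredients \autoref{lm:st.bnd}, \autoref{lm:ss}, \autoref{lm:wedges}, and \autoref{lm:dom.varphi} all rest on ergodic averages and uniform bounds that are insensitive to time reversal, so this delivers $\delta(S_n(\sigma^{-n}\omega)Z(\sigma^{-n}\omega),T_n(\sigma^{-n}\omega)Z(\sigma^{-n}\omega))\to 0$ a.s. Iterating $A_1(\omega)\cdot Z(\omega)=Z(\sigma\omega)$ backwards gives $S_n(\sigma^{-n}\omega)\cdot Z(\sigma^{-n}\omega)=Z(\omega)$ identically, which turns the previous display into (a).

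For part (b), I would apply the same scheme to the dual model $(\mu^*,B^*)$. Transposition preserves strong irreducibility, contraction, and (FE1)--(FE3), and the end of \autoref{sec:z} gives $\xi^*\sim_d\xi$, so $\mathbb{E}[\xi^*(\omega)]\neq 0$. Applying (a) in the dual model yields $\mathfrak{T}_n(\sigma^{-n}\omega)\cdot Z^*(\sigma^{-n}\omega)\to Z^*(\omega)$ a.s., which under the distributional identifications at the end of \autoref{sec:z} becomes (b). Alternatively, and more directly, one mirrors the argument of (a): by \autoref{cor:varphi.tz} the ratio $\|T_n^*(\omega)\widetilde{Z}(\sigma^n\omega)\|_2/\|S_n^*(\omega)\widetilde{Z}(\sigma^n\omega)\|_2\to\infty$ a.s., and the direct analogue of \autoref{thm:alignment} forces
\[
\delta\bigl(S_n^*(\omega)\widetilde{Z}(\sigma^n\omega),\,T_n^*(\omega)\widetilde{Z}(\sigma^n\omega)\bigr)\to 0\quad\text{a.s.};
\]
combined with $S_n^*(\omega)\cdot\widetilde{Z}(\sigma^n\omega)=\widetilde{Z}(\omega)$ this is (b).

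The main obstacle I anticipate is the forward-to-backward conversion: every ingredient used in the proof of \autoref{thm:alignment} must be re-verified to hold with $\omega\mapsto\sigma^{-n}\omega$, or, in the direct approach to (b), for the ``time-dependent'' input vector $\widetilde{Z}(\sigma^n\omega)$. This is morally the same issue already handled at the start of \autoref{sec:z} when $Z(\omega)$ itself is constructed as a backward limit, but it has to be threaded carefully through the uniform estimates so that no step silently relies on the forward time direction.
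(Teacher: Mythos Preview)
Your overall strategy---apply \autoref{thm:alignment} with $x(\omega)=Z(\omega)$ and then pass to the dual---matches the paper's. But there is a genuine gap in how you verify the hypothesis $\langle \widetilde{Z}(\sigma^{m}\omega), y_m\rangle\neq 0$ a.s.\ for large $m$. You propose a ``short bootstrap'' in which this is deduced from the alignment $y_m\approx Z(\sigma^m\omega)$ that the theorem itself produces; that is circular, since you need the hypothesis \emph{before} you can invoke the theorem. The paper avoids this entirely by recalling the identity already established as \eqref{eq:et.xi} in the proof of \autoref{prop:varphi.z}:
\[
\dfrac{1}{m}\dfrac{\langle \widetilde{Z}(\sigma^{m}\omega),\, T_{m}(\omega)Z(\omega)\rangle}{\langle \widetilde{Z}(\sigma^{m}\omega),\, S_{m}(\omega)Z(\omega)\rangle}\;\longrightarrow\;\mathbb{E}[\xi(\omega)]\neq 0\quad\text{a.s.}
\]
This gives $\langle \widetilde{Z}(\sigma^{m}\omega), T_{m}(\omega)Z(\omega)\rangle\neq 0$ for all large $m$ directly, with no appeal to the conclusion. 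You should replace the bootstrap by this observation.

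On the forward-to-backward conversion: the paper does \emph{not} re-run the proof of \autoref{thm:alignment} with $\omega\mapsto\sigma^{-n}\omega$ threaded through every estimate. It simply records that \autoref{thm:alignment} gives $\delta\bigl(S_{n}(\omega)\cdot Z(\omega),\,T_{n}(\omega)\cdot Z(\omega)\bigr)\to 0$ a.s., and then reads off \ref{tz:tz} from the identity $S_{n}(\sigma^{-n}\omega)\cdot Z(\sigma^{-n}\omega)=Z(\omega)$. Likewise for \ref{tz:ttz} it applies the same reasoning to the dual (using $\mathbb{E}[\xi^*(\omega)]=\mathbb{E}[\xi(\omega)]\neq 0$) and the identity $S_{n}^*(\omega)\cdot\widetilde{Z}(\sigma^{n}\omega)=\widetilde{Z}(\omega)$. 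Your plan to re-verify \autoref{lm:st.bnd}, \autoref{lm:ss}, \autoref{lm:wedges}, and \autoref{lm:dom.varphi} under a time-shift is more effort than the paper spends here; whatever subtlety you sense in that passage, the paper does not engage with it, so your write-up should follow the paper's shorter route rather than introduce a parallel argument.
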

\begin{proof}
We plan to apply $x(\omega) = Z(\omega)$ in \autoref{thm:alignment}. Note first by \autoref{prop:varphi.z}, when $\mathbb{E}[\xi(\omega)]\not= 0$, we have $\varphi_n(\omega, Z(\omega))\to\infty$ as $n\to\infty$. And $\langle \widetilde{Z}(\omega), Z(\omega) \rangle\not = 0\ \text{ a.s.}$ follows from \autoref{lm:zlm} \ref{zlm:nzero}. Next $\langle \widetilde{Z}(\sigma^{m}\omega), T_{m}(\omega)Z(\omega)\rangle\not = 0\ \text{ a.s.}$ follows from \eqref{eq:et.xi} and that $\mathbb{E}[\xi(\omega)]\not = 0$. Therefore, by \autoref{thm:alignment}, we have 
\[
\delta\big( S_{n}(\omega)\cdot Z(\omega),\ T_{n}(\omega)\cdot Z(\omega)\big) \to 0\quad \text{ a.s.} \ (n\to\infty).
\]
This proves \ref{tz:tz} since $S_{n}(\sigma^{-n}\omega)\cdot Z(\sigma^{-n}\omega) = Z(\omega)$ by \autoref{lm:zlm} \ref{zlm:z}. For the second statement, by the discussion at the end of \Cref{sec:varphi.z}, $\xi(\omega)$ shares the same law with $\xi^*(\omega)$ for the dual problem. Therefore $\mathbb{E}[\xi^*(\omega)] = \mathbb{E}[\xi(\omega)] \not = 0$. Consequently, we can similarly obtain that
\[
\delta\left(S_{n}^*(\omega)\cdot \widetilde{Z}(\sigma^{n}\omega), \ T_{n}^*(\omega)\cdot\widetilde{Z}(\sigma^{n}\omega)\right) \to 0 \quad \text{ a.s.} \ (n\to\infty).
\]
Then the rest follows from $S_{n}^*(\omega)\cdot \widetilde{Z}(\sigma^{n}\omega) = \widetilde{Z}(\omega)$.
\end{proof}

\section{Asymptotic Behaviour of $\varphi_n(\omega, x)$}
\label{sec:varphi.x}
In this section, we study the asymptotic behaviour of $\varphi_n(\omega, x)$ for a deterministic vector $x\not = 0$ by the help of $Z(\omega)$ using \autoref{prop:varphi.z} and \autoref{cor:tz}. The key idea is to use $\varphi_n(\omega, Z(\omega))$ to approximate $\varphi_n(\omega, x)$, and the vital step is to prove \autoref{prop:ts.finite}, which will be used to bound the difference between $\varphi_n(\omega, Z(\omega))$ and $\varphi_n(\omega, x)$ in \autoref{thm:varphi.x}. In the end, we will show that if $\mathbb{E}[\xi(\omega)]\not = 0$, $x_n$ and $y_n$ almost surely align in the same direction as $n\to\infty$ by \autoref{thm:alignment}. In this section, we specify the norms in the definition \eqref{defn:x} of $\varphi_n$ to be $\ell_2$-norms. Some of the results can be generalised by dropping the non-zero expectation condition, and this will be discussed in \Cref{sec:drop}.

We start by studying the action of $S_{n}(\omega)$ and $T_{n}(\omega)$ on $\widetilde{Z}(\omega)^\perp$. It will be useful for the bound in \autoref{prop:ts.finite}. The action of $S_{n}(\omega)$ on $\widetilde{Z}(\omega)$ is well-understood by Oseledets' multiplicative ergodic theorem or \cite[Corollary VI.1.7]{Bougerol}. Here we quote it as a lemma.

\begin{lemma}\label{lm:2le}
If $\mu$ is strongly irreducible and contracting, and satisfies (FE1) and (FE3), then for a non-zero vector $\eta$ which is orthogonal to $\widetilde{Z}(\omega)$, we have
\[
\limsup_{n\to\infty} \dfrac{1}{n}\log\|S_{n}(\omega)\eta\|\le\gamma_2<\gamma_1,
\]
where $\gamma_2$ is the second Lyapunov exponent.
\end{lemma}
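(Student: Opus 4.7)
\medskip

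\noindent\textbf{Proof proposal.} The plan is to deduce the lemma from Oseledets' multiplicative ergodic theorem (or equivalently from \cite[Corollary VI.1.7]{Bougerol}) combined with the fact that under the contracting and strongly irreducible hypotheses the top Lyapunov exponent is simple. Conditions (FE1) and (FE3) guarantee $\mathbb{E}[\log^+\|A_1^{\pm 1}(\omega)\|]<\infty$, which is exactly the integrability needed to invoke Oseledets in the invertible cocycle setting (recall $\sigma$ is assumed invertible). This yields an almost-sure measurable decomposition $\mathbb{R}^d = E_1(\omega)\oplus F(\omega)$, where $E_1(\omega)$ is the top Oseledets subspace with exponent $\gamma_1$, and $F(\omega)$ is a complementary subspace on which
\[
\limsup_{n\to\infty}\frac{1}{n}\log\|S_n(\omega)v\|\le \gamma_2,\qquad \forall\, v\in F(\omega)\setminus\{0\}.
\]
Simplicity of $\gamma_1$, i.e.\ $\dim E_1(\omega)=1$ and $\gamma_1>\gamma_2$, follows from strong irreducibility together with the contracting property by \cite[Proposition III.6.2]{Bougerol}, already cited in the paper.

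Once Oseledets is in hand, everything reduces to identifying $F(\omega)$ with $\widetilde{Z}(\omega)^{\perp}$. The cleanest route is through duality: applying the same decomposition to the transposed cocycle $S_n^*(\omega)$ (whose law is $\mu^*$, still strongly irreducible and contracting with the same Lyapunov spectrum) gives a one-dimensional top subspace $\widetilde{E}_1(\omega)$. By the construction of $\widetilde{Z}(\omega)$ as the almost sure direction spanning the range of any limit point of $S_n^*(\omega)/\|S_n^*(\omega)\|_2$, one has $\widetilde{E}_1(\omega)=\operatorname{span}\widetilde{Z}(\omega)$. Standard duality between Oseledets flags (see \cite[Ch.\ VI]{Bougerol}) then gives $F(\omega)=\widetilde{E}_1(\omega)^{\perp}=\widetilde{Z}(\omega)^{\perp}$. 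Hence any non-zero $\eta\perp\widetilde{Z}(\omega)$ lies in $F(\omega)$ and the announced bound follows.

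As an alternative derivation (useful as a sanity check, and closer to the polar-decomposition language already used in \autoref{lm:ss}), one can write $S_n(\omega)=U_n(\omega)D_n(\omega)V_n(\omega)$ and expand $V_n(\omega)\eta=\sum_{i=1}^d c_i(n,\omega)\,e_i$. Then $\|S_n(\omega)\eta\|_2^2=\sum_i \alpha_i(n,\omega)^2 c_i(n,\omega)^2$, with $(1/n)\log\alpha_i(n,\omega)\to\gamma_i$ almost surely. The condition $\eta\perp\widetilde{Z}(\omega)$ forces $c_1(n,\omega)\to 0$ along every convergent subsequence of $V_n(\omega)$, and combined with the gap $\gamma_1>\gamma_2$ this yields $\limsup (1/n)\log\|S_n(\omega)\eta\|_2\le\gamma_2$.

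The main obstacle is precisely the identification $F(\omega)=\widetilde{Z}(\omega)^{\perp}$, since defining $\widetilde{Z}(\omega)$ in terms of limit points of $S_n^*/\|S_n^*\|_2$ does not a priori match the backward-invariant Oseledets flag. In the polar-decomposition approach, the subtlety is that to conclude $c_1(n,\omega)\to 0$ one must argue that \emph{every} subsequential limit $V_\infty(\omega)$ of $V_n(\omega)$ satisfies $V_\infty^*(\omega) e_1\in\widetilde{Z}(\omega)$; this is precisely the content of the contracting property exploited in the proof of \autoref{lm:ss}. Once that point is granted, the rest of the argument is just Oseledets plus bookkeeping on singular values, and no further work beyond invoking \cite[Corollary VI.1.7]{Bougerol} is needed.
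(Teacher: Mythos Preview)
Your proposal is correct and essentially aligns with the paper's treatment: the paper does not actually prove this lemma but simply quotes it as a known result, writing ``The action of $S_{n}(\omega)$ on $\widetilde{Z}(\omega)$ is well-understood by Oseledets' multiplicative ergodic theorem or \cite[Corollary VI.1.7]{Bougerol}. Here we quote it as a lemma.'' Your sketch via Oseledets, simplicity of the top exponent from \cite[Proposition III.6.2]{Bougerol}, and the duality identification $F(\omega)=\widetilde{Z}(\omega)^\perp$ is exactly the content of the cited reference, so you have supplied the details the paper omits rather than taken a different route.
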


Next, we will use this lemma to study the action of $T_{n}(\omega)$ on $\widetilde{Z}(\omega)^\perp$. Just as usual, we will use the inductive formula \eqref{eq:t.ind} and study the actions of $S_{n}(\sigma^{m}\omega)T_m(\omega)$ and $T_{n}(\sigma^{m}\omega)S_m(\omega)$ separately.

\begin{lemma}\label{lm:eta.bnd.st}
Let $\mu$ be strongly irreducible and contracting, and satisfies (FE1)-(FE3).
Assume $\eta\in \widetilde{Z}(\omega)^\perp$ with $\|\eta\|_2 = 1$ and $\mathbb{E}[\xi(\omega)]\not = 0$. For $\epsilon >0$, there exist integers $M, N>0$ such that when $m > M$ and $n > N$, we have
\[
\|S_m(\sigma^{n}\omega)T_{n}(\omega)\eta\|_2  \le \|S_{m}(\sigma^{n}\omega)\|_2\left(\|T_{n}^*(\omega)\widetilde{Z}(\sigma^{n}\omega)\|_2 \ \epsilon + \epsilon\right)\quad \text{ a.s.}.
\]
\end{lemma}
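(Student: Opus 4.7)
The plan is to combine two ingredients: Corollary \ref{cor:tz}\ref{tz:ttz}, which under $\mathbb{E}[\xi(\omega)]\neq 0$ forces $T_n^*(\omega)\widetilde{Z}(\sigma^n\omega)$ to align with $\widetilde{Z}(\omega)$ as $n\to\infty$, and Lemma \ref{lm:ss}, applied at the shifted base point $\sigma^n\omega$, to convert the operator action of $S_m(\sigma^n\omega)$ into an inner product against $\widetilde{Z}(\sigma^n\omega)$. The upshot is that the ``bad'' direction $\eta$ becomes nearly orthogonal to $T_n^*(\omega)\widetilde{Z}(\sigma^n\omega)$ for large $n$, and $S_m(\sigma^n\omega)$ only sees the component of $T_n(\omega)\eta$ along $\widetilde{Z}(\sigma^n\omega)$.

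First I would pick $N$: since $\|\eta\|_2=1$ and $\eta\perp\widetilde{Z}(\omega)$, Corollary \ref{cor:tz}\ref{tz:ttz} yields
\[
\lim_{n\to\infty}\frac{|\langle \eta,\, T_n^*(\omega)\widetilde{Z}(\sigma^n\omega)\rangle|}{\|T_n^*(\omega)\widetilde{Z}(\sigma^n\omega)\|_2}=0\quad\text{a.s.},
\]
so there is an $N$ with $|\langle \eta,\,T_n^*(\omega)\widetilde{Z}(\sigma^n\omega)\rangle|\le \tfrac{\epsilon}{2}\|T_n^*(\omega)\widetilde{Z}(\sigma^n\omega)\|_2$ for $n>N$. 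Next I would invoke Lemma \ref{lm:ss} at base point $\sigma^n\omega$ with the vector $T_n(\omega)\eta$, which gives
\[
\lim_{m\to\infty}\frac{\|S_m(\sigma^n\omega)T_n(\omega)\eta\|_2}{\|S_m(\sigma^n\omega)\|_2}=|\langle T_n(\omega)\eta,\,\widetilde{Z}(\sigma^n\omega)\rangle|=|\langle \eta,\,T_n^*(\omega)\widetilde{Z}(\sigma^n\omega)\rangle|.
\]
Once $M$ is large enough that this ratio is within $\epsilon/2$ of its limit for each relevant $n$, combining with the bound from the previous step and absorbing constants into $\epsilon$ yields the claimed inequality.

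The main obstacle is that the rate of convergence in Lemma \ref{lm:ss} a priori depends on the base point $\sigma^n\omega$, so choosing a single $M$ valid for \emph{all} $n>N$ is not automatic. I would overcome this by working directly from the polar decomposition $S_m(\sigma^n\omega)=U_m D_m V_m$ with singular values $\alpha_1\ge\alpha_2\ge\cdots$, which gives the sharper estimate
\[
\|S_m(\sigma^n\omega)v\|_2\le \alpha_1(m,\sigma^n\omega)\,|\langle v,\,V_m^*(n,\omega)e_1\rangle|+\alpha_2(m,\sigma^n\omega)\|v\|_2.
\]
Substituting $v=T_n(\omega)\eta$, writing $|\langle v,V_m^*e_1\rangle|\le |\langle\eta,T_n^*\widetilde{Z}(\sigma^n\omega)\rangle|+\|T_n(\omega)\eta\|_2\,\|V_m^*(n,\omega)e_1-\widetilde{Z}(\sigma^n\omega)\|_2$, applying the bound from Step~1 to the first piece, bounding $\|T_n(\omega)\eta\|_2\le C_2(n)$ via Lemma \ref{lm:st.bnd}(a), and invoking the positivity of the Lyapunov gap $\gamma_1-\gamma_2>0$ (guaranteed by strong irreducibility and contraction, cf.\ Lemma \ref{lm:2le}) to secure exponential decay of $\alpha_2/\alpha_1$ and $\|V_m^*(n,\omega)e_1-\widetilde{Z}(\sigma^n\omega)\|_2$ in $m$, one can balance these rates against the at-most-exponential growth of $\|T_n(\omega)\eta\|_2$ in $n$ to make a single $M$ work uniformly for all $n>N$.
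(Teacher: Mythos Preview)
Your core approach matches the paper's: first use \autoref{cor:tz}\ref{tz:ttz} to get $|\langle\eta,T_n^*(\omega)\widetilde{Z}(\sigma^n\omega)\rangle|\le\epsilon\,\|T_n^*(\omega)\widetilde{Z}(\sigma^n\omega)\|_2$ for $n>N$, then use \autoref{lm:ss} at the shifted base point to turn $\|S_m(\sigma^n\omega)T_n(\omega)\eta\|_2/\|S_m(\sigma^n\omega)\|_2$ into $|\langle\widetilde{Z}(\sigma^n\omega),T_n(\omega)\eta\rangle|$. The paper's proof is exactly this, in four lines.

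Where you diverge is in handling the uniformity of $M$. The paper simply invokes the ``uniform in $x$'' clause already built into \autoref{lm:ss}: since that convergence is uniform over all nonzero vectors, the $n$-dependence that enters through the vector $T_n(\omega)\eta$ is automatically absorbed, and the paper declares $M$ independent of $n$ on that basis. You overlooked this clause and instead devised an elaborate polar-decomposition argument.

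That workaround, as written, does not close. You propose to control $\alpha_2(m,\sigma^n\omega)\|T_n(\omega)\eta\|_2$ by balancing exponential decay of $\alpha_2/\alpha_1$ in $m$ against at-most-exponential growth of $\|T_n(\omega)\eta\|_2\le C_2(n)$ in $n$. But these are independent variables: for any \emph{fixed} $m=M$, the product $e^{M(\gamma_2-\gamma_1+\epsilon)}C_2(n)$ blows up as $n\to\infty$, so no single $M$ tames it uniformly in $n$. The same objection applies to the term $\|T_n(\omega)\eta\|_2\cdot\|V_m^*e_1-\widetilde Z(\sigma^n\omega)\|_2$. In short, your diagnosis of the obstacle is sound, but your proposed cure reproduces the disease; the paper's resolution is to lean on the uniformity already stated in \autoref{lm:ss}.
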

\begin{proof}
By \autoref{cor:tz}, for $\epsilon > 0$, there exists an integer $N>0$ such that when $n > N$,
\[
\delta\left(\dfrac{T_{n}^*(\omega)\widetilde{Z}(\sigma^{n}\omega)}{\|T_{n}^*(\omega)\widetilde{Z}(\sigma^{n}\omega)\|_2} ,\ \widetilde{Z}(\omega)\right) < \sqrt{2}\epsilon \quad \text{ a.s.}.
\]
Therefore by \autoref{lm:delta.equiv},
\begin{align*}
|\langle \widetilde{Z}(\sigma^{n}\omega), T_{n}(\omega)\eta\rangle| & =  |\langle T_{n}^*(\omega)\widetilde{Z}(\sigma^{n}\omega), \eta\rangle| \\
&\le \|T_{n}^*(\omega)\widetilde{Z}(\sigma^{n}\omega)\|_2 \left(|\langle \widetilde{Z}(\omega), \eta\rangle|+\epsilon\right) \\
& = \epsilon\|T_{n}^*(\omega)\widetilde{Z}(\sigma^{n}\omega)\|_2\quad \text{ a.s.}.
\end{align*}
Finally by \autoref{lm:ss}, 
\[
\lim_{m\to\infty}\dfrac{\|S_{m}(\sigma^{n}\omega)T_{n}(\omega)\eta\|_2}{\|S_{m}(\sigma^{n}\omega)\|_2} = |\langle \widetilde{Z}(\sigma^{n}\omega), T_{n}(\omega)\eta\rangle| \le \epsilon\|T_{n}^*(\omega)\widetilde{Z}(\sigma^{n}\omega)\|_2\quad \text{ a.s.}.
\]
The conclusion follows from the translation of the limit into the epsilon-delta language. Note since the convergence in \autoref{lm:ss} is uniform in $x$, the choice of $M$ does not depend on $n$.
\end{proof}

\begin{prop}
\label{prop:ts.finite}
Let $\mu$ be strongly irreducible and contracting, and satisfy (FE1)-(FE3). If $\mathbb{E}[\xi(\omega)] \not = 0$, then
\[
\sup_n\dfrac{1}{n}\dfrac{\|T_{n}(\omega)\|_2}{\|S_{n}(\omega)\|_2} < \infty\quad \text{ a.s.}.
\]
\end{prop}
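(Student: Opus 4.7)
The plan is to reduce the statement to bounding $\|T_n^*(\omega)\|_2$ via the identity $\|T_n(\omega)\|_2 = \|T_n^*(\omega)\|_2$, and then to split each unit test vector into its component along $\widetilde{z}(\sigma^n\omega)$ and its orthogonal complement. Concretely, for any unit $v\in\mathbb{R}^d$ write $v = \alpha\,\widetilde{z}(\sigma^n\omega) + \eta$ with $\eta\perp\widetilde{z}(\sigma^n\omega)$ and $|\alpha|^2+\|\eta\|_2^2=1$; the triangle inequality then gives
\[
\|T_n^*(\omega)v\|_2 \;\le\; \|T_n^*(\omega)\widetilde{z}(\sigma^n\omega)\|_2 \;+\; \|T_n^*(\omega)\eta\|_2,
\]
so it suffices to control each summand by $C(\omega)\,n\,\|S_n(\omega)\|_2$.

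The first summand is handled by the estimates already proved. Applying \autoref{cor:varphi.tz}, for any $\epsilon>0$ and $n$ sufficiently large,
\[
\|T_n^*(\omega)\widetilde{z}(\sigma^n\omega)\|_2 \;\le\; (|\mathbb{E}[\xi(\omega)]|+\epsilon)\,n\,\|S_n^*(\omega)\widetilde{z}(\sigma^n\omega)\|_2 \quad \text{a.s.},
\]
and bounding $\|S_n^*(\omega)\widetilde{z}(\sigma^n\omega)\|_2 \le \|S_n^*(\omega)\|_2 = \|S_n(\omega)\|_2$ gives a clean bound of order $n\,\|S_n(\omega)\|_2$ for this term.

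The core of the proof is the second summand. My plan here is to use the expansion
\[
T_n^*(\omega)\eta \;=\; \sum_{i=1}^n S_{i-1}^*(\omega)\, B_i^*(\omega)\, S_{n-i}^*(\sigma^i\omega)\,\eta,
\]
combined with two ingredients: (i) \autoref{lm:2le}, which via the Lyapunov gap gives exponential decay $\|S_m\xi\|_2 \lesssim e^{m(\gamma_2+o(1))}$ for $\xi\perp\widetilde{z}(\omega)$, and (ii) the iterative identity $A_j^*(\omega)\cdot\widetilde{Z}(\sigma^j\omega)=\widetilde{Z}(\sigma^{j-1}\omega)$ from \autoref{lm:zlm}\ref{zlm:tz}, which converts orthogonality of $\eta$ to $\widetilde{z}(\sigma^n\omega)$ into approximate orthogonality of $S_{n-i}^*(\sigma^i\omega)\eta$ to $\widetilde{z}(\sigma^i\omega)$ modulo exponentially small errors. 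Combined with the integrability afforded by (FE1)--(FE3), this should produce a bound of order $\|S_n(\omega)\|_2$ per term, yielding a total of order $n\,\|S_n(\omega)\|_2$ after summation in $i$. For the finitely many small $n$ outside the asymptotic regime, the ratio $\|T_n\|_2/(n\|S_n\|_2)$ is trivially finite pointwise, so the supremum is finite a.s.

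The main obstacle is precisely this last step: obtaining a uniform $O(n\,\|S_n\|_2)$ estimate on $\|T_n^*(\omega)\eta\|_2$ as $\eta$ ranges over unit vectors in $\widetilde{z}(\sigma^n\omega)^\perp$. The mismatch between the ``growing direction'' $\widetilde{z}(\sigma^i\omega)$ of each factor $S_{n-i}^*(\sigma^i\omega)$ and the fixed orthogonality condition imposed on $\eta$ at time $n$ requires careful propagation of the perpendicularity through the product, and it is here that the hypothesis $\mathbb{E}[\xi(\omega)]\neq 0$ ultimately enters (through the preceding alignment statements of \Cref{sec:angles}) to keep the bookkeeping self-consistent.
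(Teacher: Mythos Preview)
Your handling of the first summand via \autoref{cor:varphi.tz} is fine, and the overall idea of passing to $T_n^*$ is reasonable. The genuine gap is in the second summand.

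The claim that the identity $A_j^*(\omega)\cdot\widetilde{Z}(\sigma^j\omega)=\widetilde{Z}(\sigma^{j-1}\omega)$ ``converts orthogonality of $\eta$ to $\widetilde{z}(\sigma^n\omega)$ into approximate orthogonality of $S_{n-i}^*(\sigma^i\omega)\eta$ to $\widetilde{z}(\sigma^i\omega)$'' is false. That identity says only that $A_j^*$ carries the line $\widetilde{Z}(\sigma^j\omega)$ to the line $\widetilde{Z}(\sigma^{j-1}\omega)$; since $A_j^*$ is not orthogonal, it does \emph{not} carry $\widetilde{Z}(\sigma^j\omega)^\perp$ to $\widetilde{Z}(\sigma^{j-1}\omega)^\perp$. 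Concretely,
\[
\langle S_{n-i}^*(\sigma^i\omega)\eta,\ \widetilde{z}(\sigma^i\omega)\rangle \;=\; \langle \eta,\ S_{n-i}(\sigma^i\omega)\widetilde{z}(\sigma^i\omega)\rangle,
\]
and there is no reason for $S_{n-i}(\sigma^i\omega)\widetilde{z}(\sigma^i\omega)$ to be (nearly) parallel to $\widetilde{z}(\sigma^n\omega)$. Worse, for $\|S_{n-i}^*(\sigma^i\omega)\eta\|_2$ to be small relative to $\|S_{n-i}^*(\sigma^i\omega)\|_2$ you would need $\eta$ orthogonal to the top \emph{right} singular vector of $S_{n-i}^*(\sigma^i\omega)$, i.e.\ the top \emph{left} singular vector of $S_{n-i}(\sigma^i\omega)=S_{n-i}(\sigma^{-(n-i)}\sigma^n\omega)$, whose direction for large $n-i$ is $Z(\sigma^n\omega)$, not $\widetilde{Z}(\sigma^n\omega)$. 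So your orthogonal decomposition is along the wrong direction for \autoref{lm:2le}-type decay to apply to the factors $S_{n-i}^*(\sigma^i\omega)$. Even if each term were controlled, summing $n$ bounds of the form $\|S_{i-1}^*(\omega)\|_2\,\|B_i^*\|_2\,\|S_{n-i}^*(\sigma^i\omega)\eta\|_2$ to get $O(n\,\|S_n(\omega)\|_2)$ requires multiplicative comparisons between $\|S_{i-1}\|\cdot\|S_{n-i}\|$ and $\|S_n\|$ that are not available without further work.

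For comparison, the paper works with $T_n$ (not $T_n^*$) and uses an $n$-\emph{independent} decomposition of the maximising unit vector along $Z(\omega)$ and into $\widetilde{Z}(\omega)^\perp$. The $Z(\omega)$-component is handled by \autoref{prop:varphi.z} and \autoref{lm:ss}. For $\eta\in\widetilde{Z}(\omega)^\perp$ the paper does \emph{not} expand $T_n$ term by term; instead it uses the two-block splitting $T_{m+n}(\omega)\eta = S_m(\sigma^n\omega)T_n(\omega)\eta + T_m(\sigma^n\omega)S_n(\omega)\eta$. The second block is exactly where \autoref{lm:2le} applies, since $\eta\perp\widetilde{Z}(\omega)$ is precisely its hypothesis. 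The first block is controlled by \autoref{lm:eta.bnd.st}: because $T_n^*(\omega)\cdot\widetilde{Z}(\sigma^n\omega)\to\widetilde{Z}(\omega)$ (\autoref{cor:tz}, which is where $\mathbb{E}[\xi]\neq 0$ enters) and $\eta\perp\widetilde{Z}(\omega)$, one gets $|\langle\widetilde{Z}(\sigma^n\omega),T_n(\omega)\eta\rangle|$ small, and then \autoref{lm:ss} turns this into a bound on $\|S_m(\sigma^n\omega)T_n(\omega)\eta\|_2$. If you want to salvage your transposed approach, the analogue would be to decompose along $Z^*(\omega)$ and $\widetilde{Z}^*(\omega)^\perp$ in the dual problem and repeat the paper's two-block argument there, rather than attempting the term-by-term sum.
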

\begin{proof}
Assume $\|T_{n}(\omega)\|_2 = \|T_{n}(\omega)u_n(\omega)\|_2$ for some vector $\|u_n(\omega)\| = 1$. Then we can decompose $u_n(\omega) = \alpha_n(\omega) z(\omega) + \beta_n(\omega)\eta$, where $z(\omega)$ is a representative of $Z(\omega)$ and $\eta\in \widetilde{Z}(\omega)^\perp$ with $\|\eta\|_2=1$ and
\[
\alpha_n(\omega)=\dfrac{\langle \widetilde{Z}(\omega), u_n(\omega) \rangle}{\langle \widetilde{Z}(\omega), z(\omega)\rangle}, \quad |\beta_n(\omega)|=|u_n(\omega)-\alpha_n(\omega)z(\omega)|.
\]
By \autoref{lm:zlm} \ref{zlm:nzero}, $\alpha_n(\omega)$ is well-defined almost surely. Since
\[
\|T_{n}(\omega)\|_2 \le |\alpha_n(\omega)|\|T_{n}(\omega)Z(\omega)\|_2 + |\beta_n(\omega)|\|T_{n}(\omega)\eta\|_2.
\]
and that
\[
|\alpha_n(\omega)| \le \dfrac{1}{|\langle \widetilde{Z}(\omega), Z(\omega) \rangle|}, \quad
|\beta_n(\omega)| \le 1 + \dfrac{1}{|\langle \widetilde{Z}(\omega), Z(\omega) \rangle|}\quad \text{ a.s.},
\]
the problem reduces to proving
\begin{enumerate}[label = (\alph*)]
\item \label{ts.finite.z}
$\sup_n\dfrac{\|T_{n}(\omega)Z(\omega)\|_2}{n\|S_{n}(\omega)\|_2} < \infty\quad \text{ a.s.}.$
\item \label{ts.finite.eta}
$\sup_n\dfrac{\|T_{n}(\omega)\eta\|_2}{n\|S_{n}(\omega)\|_2} < \infty\quad \text{ a.s.}.$
\end{enumerate}
For \ref{ts.finite.z}, by \autoref{prop:varphi.z} and \autoref{lm:ss} we have
\[
\lim_{n\to\infty} \dfrac{\|T_{n}(\omega)Z(\omega)\|_2}{n\|S_{n}(\omega)\|_2} 
= \lim_{n\to\infty} \dfrac{\|T_{n}(\omega)Z(\omega)\|_2}{n\|S_{n}(\omega)Z(\omega)\|_2} \dfrac{\|S_{n}(\omega)Z(\omega)\|_2}{\|S_{n}(\omega)\|_2}
= |\mathbb{E}[\xi(\omega)]|\cdot|\langle \widetilde{Z}(\omega), Z(\omega)\rangle|\quad \text{ a.s.}.
\]
Thus when $n$ is sufficiently large, $\|T_{n}(\omega)Z(\omega)\|_2/n\|S_{n}(\omega)\|_2$ is universally bounded by the limit plus $\epsilon$ for some $\epsilon>0$.

For \ref{ts.finite.eta}, note first by \autoref{lm:eta.bnd.st}, for $\epsilon > 0$, we can find integers $M_1, N_1>0$ such that when $m > M_1$ and $n > N_1$,
\[
\|S_m(\sigma^{n}\omega)T_{n}(\omega)\eta\|_2  \le \|S_{m}(\sigma^{n}\omega)\|_2\left(\|T_{n}^*(\omega)\widetilde{Z}(\sigma^{n}\omega)\|_2 \ \epsilon + \epsilon\right)\quad \text{ a.s.}.
\]
This shows
\begin{equation}\label{eq:st.eta}
\dfrac{\|S_m(\sigma^{n}\omega)T_{n}(\omega)\eta\|_2}{(m+n)\|S_{m+n}(\omega)\|_2} \le
   \dfrac{\|S_{m}(\sigma^{n}\omega)\|_2\|S_{n}(\omega)\|_2}{\|S_{m+n}(\omega)\|_2}\dfrac{\|T_{n}^*(\omega)\widetilde{Z}(\sigma^{n}\omega)\|_2 \ \epsilon + \epsilon}{n\|S_{n}(\omega)\|_2}\quad \text{ a.s.}.
\end{equation}
By \autoref{cor:varphi.tz}, there exists an integer $N_2>0$ such that when $n > N_2$, we have 
\begin{equation}\label{eq:st.eta2}
\dfrac{\|T_{n}^*(\omega)\widetilde{Z}(\sigma^{n}\omega)\|_2 \ \epsilon}{n\|S_{n}(\omega)\|_2}
\le \dfrac{\|T_{n}^*(\omega)\widetilde{Z}(\sigma^{n}\omega)\|_2 \ \epsilon}{n\|S_{n}^*(\omega)\widetilde{Z}(\sigma^{n}\omega)\|_2} < |\mathbb{E}[\xi(\omega)]|\epsilon + \epsilon^2\quad \text{ a.s.}.
\end{equation}
Moreover, by \autoref{cor:ss}, for a fixed $n\ge 1$ we have
\begin{equation}\label{eq:st.eta1}
\sup_m \dfrac{\|S_{m}(\sigma^{n}\omega)\|_2}{\|S_{m+n}(\omega)\|_2} < \infty\quad \text{ a.s.}.
\end{equation}
Therefore, by fixing $n > \max\{N_1, N_2\}$ and applying \eqref{eq:st.eta1} and \eqref{eq:st.eta2} to \eqref{eq:st.eta}, we have for all $m >  M_1$,
\begin{equation} \label{eq:st.eta.fin}
\begin{aligned}
&\dfrac{\|S_m(\sigma^{n}\omega)T_{n}(\omega)\eta\|_2}{(m+n)\|S_{m+n}(\omega)\|_2} \\
\le & \sup_m\dfrac{\|S_{m}(\sigma^{n}\omega)\|_2\|S_{n}(\omega)\|_2}{\|S_{m+n}(\omega)\|_2} \left(|\mathbb{E}[\xi(\omega)]|\epsilon+\epsilon^2\right) + \sup_m\dfrac{\|S_{m}(\sigma^{n}\omega)\|_2}{n\|S_{m+n}(\omega)\|_2}\epsilon \\
< & \infty. \quad \text{ a.s.}. 
\end{aligned}
\end{equation}

On the other hand, by \autoref{lm:2le}, for $\epsilon>0$, there exists an integer $N_3>0$ such that when $n > N_3$, we have
\[
\|S_{n}(\omega)\eta\|_2 \le e^{n(\gamma_2+\epsilon)},\quad \|S_{m+n}(\omega)\|_2\ge e^{(m+n)(\gamma_1-\epsilon)}\quad  \text{ a.s.}.
\]
Choose an integer $M_2>0$ such that when $m > M_2$, $\|T_{m}(\sigma^{n}\omega)\|\le C_2(m)$ by \autoref{lm:st.bnd}.
Then by fixing $m>\{M_1, M_2\}$ and choosing $\epsilon>0$ with $2\epsilon < \gamma_1-\gamma_2$, we have that for any $n > \max\{N_1, N_3\}$,
\begin{equation}\label{eq:ts.eta.fin}
\dfrac{\|T_{m}(\sigma^{n}\omega)S_{n}(\omega)\eta\|_2}{\|S_{m+n}(\omega)\|_2} \le \|T_{m}(\sigma^{n}\omega)\|_2 \ e^{n(\gamma_2-\gamma_1+2\epsilon)}e^{-m\gamma_1+m\epsilon}\le C_2(m) e^{-m\gamma_1+m\epsilon}\quad \text{ a.s.}.
\end{equation}
Then \ref{ts.finite.eta} follows from \eqref{eq:st.eta.fin} and \eqref{eq:ts.eta.fin}. This finishes the proof of the proposition.
\end{proof}

\begin{thm}\label{thm:varphi.x}
Let $\mu$ be strongly irreducible and contracting, and satisfy (FE2) and (FE4). If $\mathbb{E}[\xi(\omega)]\not = 0$, then for a non-zero deterministic vector $x\not = 0$,
\[
\lim_{n\to\infty}\dfrac{\varphi_n(\omega, x)}{n} = |\mathbb{E}[\xi(\omega)]| \quad \text{ a.s.}.
\]
\end{thm}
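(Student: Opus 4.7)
The plan is to adapt the proof of \autoref{prop:varphi.z} to a deterministic $x$ in place of $Z(\omega)$. By strong irreducibility, the $\mu^*$-invariant measure $\nu^*$ charges no proper subspace of $\mathbb{R}^d$, so $\langle x, \widetilde{Z}(\omega)\rangle \neq 0$ a.s., and I decompose
\[
x = \alpha(\omega) z(\omega) + \eta(\omega), \quad \alpha(\omega) = \frac{\langle x, \widetilde{Z}(\omega)\rangle}{\langle z(\omega), \widetilde{Z}(\omega)\rangle}, \quad \eta(\omega) \in \widetilde{Z}(\omega)^\perp,
\]
where $\alpha(\omega) \neq 0$ a.s.\ is well-defined thanks to \autoref{lm:zlm} \ref{zlm:nzero}. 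Note that (FE4) implies (FE1) and (FE3), so together with the assumed (FE2), all finite expectation conditions hold.

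Mirroring \autoref{prop:varphi.z}, I apply the inductive formula $T_{n+m}(\omega)x = S_n(\sigma^m\omega)T_m(\omega)x + T_n(\sigma^m\omega)S_m(\omega)x$ and divide by $\|S_{n+m}(\omega)x\|_2$. By \autoref{lm:st.bnd}, the error term $\|T_n(\sigma^m\omega)S_m(\omega)x\|_2/\|S_{n+m}(\omega)x\|_2 \leq C_2(n)/C_0(n)$ is bounded uniformly in $m$, hence vanishes after dividing by $n+m$ (fix $n$ large, let $m\to\infty$). By the uniform convergence in \autoref{lm:ss} (used as in \autoref{prop:varphi.z}),
\[
\frac{\|S_n(\sigma^m\omega)T_m(\omega)x\|_2}{\|S_n(\sigma^m\omega)S_m(\omega)x\|_2} \;\longrightarrow\; \left| \frac{\langle T_m(\omega)x, \widetilde{Z}(\sigma^m\omega)\rangle}{\langle S_m(\omega)x, \widetilde{Z}(\sigma^m\omega)\rangle}\right|
\]
uniformly in $m$ as $n\to\infty$, so the problem reduces to showing the RHS divided by $m$ tends to $|\mathbb{E}[\xi(\omega)]|$ a.s.

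Using \autoref{lm:zlm} \ref{zlm:tz} iteratively (so that $S_k^*(\omega)\widetilde{z}(\sigma^k\omega)$ is a scalar multiple of $\widetilde{z}(\omega)$) together with the sum expansion $T_m(\omega) = \sum_{i=1}^m S_{m-i}(\sigma^i\omega)B_1(\sigma^{i-1}\omega)S_{i-1}(\omega)$, a computation analogous to \eqref{eq:et.xi} yields
\[
\frac{\langle T_m(\omega)x, \widetilde{Z}(\sigma^m\omega)\rangle}{\langle S_m(\omega)x, \widetilde{Z}(\sigma^m\omega)\rangle} = \sum_{i=1}^m \widetilde{\xi}(\sigma^{i-1}\omega; [S_{i-1}(\omega)x]),
\]
where $\widetilde{\xi}(\omega; [v]) := \langle \widetilde{Z}(\sigma\omega), B_1(\omega)v\rangle/\langle \widetilde{Z}(\sigma\omega), A_1(\omega)v\rangle$ depends on $v$ only through its direction and satisfies $\widetilde{\xi}(\omega; [Z(\omega)]) = \xi(\omega)$.

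The central step is to show $\widetilde{\xi}(\sigma^{i-1}\omega; [S_{i-1}(\omega)x]) - \xi(\sigma^{i-1}\omega) \to 0$ a.s.\ as $i\to\infty$. The decomposition gives $S_{i-1}(\omega)x = \alpha(\omega)S_{i-1}(\omega)z(\omega) + S_{i-1}(\omega)\eta(\omega)$, where $S_{i-1}(\omega)z(\omega)$ is parallel to $z(\sigma^{i-1}\omega)$ with norm $\sim e^{(i-1)\gamma_1}$ by \autoref{lm:zlm} \ref{zlm:le}, while $\|S_{i-1}(\omega)\eta(\omega)\| \leq e^{(i-1)(\gamma_2+\epsilon)}$ by \autoref{lm:2le}; since $\gamma_2 < \gamma_1$ (as $\mu$ is contracting), the projective distance $\delta([S_{i-1}(\omega)x], Z(\sigma^{i-1}\omega))$ decays exponentially a.s. Combined with local Lipschitz continuity of $[v] \mapsto \widetilde{\xi}(\omega; [v])$ near $[Z(\omega)]$—with a random Lipschitz constant $L(\omega)$ whose sub-exponential growth along orbits is guaranteed by (FE4)—this gives the pointwise convergence. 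Then by the Cesaro-average property together with Birkhoff's theorem applied to $\xi(\sigma^{i-1}\omega)$ (for which $\mathbb{E}|\xi|<\infty$ holds by \autoref{lm:fe2}), $(1/m)\sum_i \widetilde{\xi}(\sigma^{i-1}\omega; [S_{i-1}(\omega)x]) \to \mathbb{E}[\xi(\omega)]$ a.s., which combined with the preceding steps gives the theorem. The main obstacle is rigorously establishing the sub-exponential growth of $L(\sigma^{i-1}\omega)$, which requires carefully tracking the dependence of $L$ on $\|A_1\|, \|B_1 A_1^{-1}\|$, and $1/|\langle \widetilde{Z}(\sigma\omega), A_1(\omega)Z(\omega)\rangle|$ using the exponential moment condition (FE4).
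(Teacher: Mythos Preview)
Your approach is viable and genuinely different from the paper's. The paper works \emph{backward}: it writes $T_{m+n}(\sigma^{-n}\omega)x = S_m(\omega)T_n(\sigma^{-n}\omega)x + T_m(\omega)S_n(\sigma^{-n}\omega)x$, uses (FE4) to get $S_n(\sigma^{-n}\omega)\cdot[x]\to Z(\omega)$, and then compares $\|T_m(\omega)S_n(\sigma^{-n}\omega)x\|_2/\|S_m(\omega)S_n(\sigma^{-n}\omega)x\|_2$ with $\varphi_m(\omega,Z(\omega))$. The error in that comparison is controlled by the uniform bound $\sup_m \|T_m(\omega)\|_2/(m\|S_m(\omega)\|_2)<\infty$ of \autoref{prop:ts.finite}, which in turn rests on the alignment result \autoref{cor:tz}\ref{tz:ttz} from Section~7. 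So the paper's route is layered: \autoref{prop:varphi.z} $\Rightarrow$ \autoref{cor:tz} $\Rightarrow$ \autoref{prop:ts.finite} $\Rightarrow$ \autoref{thm:varphi.x}.

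You instead work \emph{forward}: expand the quotient as the telescoping sum $\sum_{i=1}^m\widetilde\xi(\sigma^{i-1}\omega;[S_{i-1}(\omega)x])$ (the same manipulation as \eqref{eq:et.xi} with $x$ in place of $Z(\omega)$), and compare term-by-term with $\xi(\sigma^{i-1}\omega)$. The exponential decay of $\delta([S_{i-1}(\omega)x],Z(\sigma^{i-1}\omega))$ via \autoref{lm:2le} and \autoref{lm:zlm}\ref{zlm:le} is exactly right, and the Lipschitz estimate you flag as the ``main obstacle'' does work out: writing $\widetilde\xi(\omega;v)=f(v)/g(v)$ with $g(v)=\langle\widetilde Z(\sigma\omega),A_1(\omega)v\rangle$, one has $|g(Z(\omega))|\ge c/\|A_1^{-1}(\omega)\|$ by \autoref{lm:zlm}\ref{zlm:fin.inv}, and hence $L(\omega)$ is bounded by a polynomial in $\|A_1(\omega)\|,\|A_1^{-1}(\omega)\|,\|B_1(\omega)A_1^{-1}(\omega)\|$. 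Each of these is $e^{o(i)}$ along the orbit (the first two by (FE1),(FE3) and Borel--Cantelli; the third by (FE2), which even gives $o(i)$), so $L(\sigma^{i-1}\omega)\cdot\delta\to 0$. One also checks that the neighborhood on which the Lipschitz bound is valid (namely where $|g|\ge|g(Z)|/2$) eventually contains $[S_{i-1}(\omega)x]$, since the radius $\sim 1/(\|A_1\|\|A_1^{-1}\|)$ shrinks only sub-exponentially. Thus Ces\`aro plus Birkhoff finishes the argument as you say.

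What each approach buys: yours is more self-contained, bypassing \autoref{prop:ts.finite} and the Section~7 alignment machinery entirely, at the price of a somewhat delicate quantitative Lipschitz computation. The paper's route is more modular and reuses the alignment theory it has already built, but that theory is itself nontrivial. A minor remark: you attribute the sub-exponential growth of $L$ to (FE4), but in fact (FE1)--(FE3) already suffice for this step; (FE4) enters the paper only through the backward convergence $S_n(\sigma^{-n}\omega)\cdot[x]\to Z(\omega)$, which your argument does not use.
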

\begin{proof}
By \eqref{eq:t.ind}, we have
\[
T_{m+n}(\sigma^{-n}\omega)x = S_{m}(\omega)T_{n}(\sigma^{-n}\omega)x + T_{m}(\omega)S_{n}(\sigma^{-n}\omega)x.
\]
Since $S_{n}(\sigma^{-n}\omega)\cdot[x]$ converges to $Z(\omega)$ almost surely under (FE4) as $n\to\infty$ (\cite[Theorem 6.3.1]{Bougerol}), for $\epsilon>0$, there exists an integer $N>0$ such that when $n > N$, we have
\[
\delta\left(\dfrac{S_{n}(\sigma^{-n}\omega)x}{\|S_{n}(\sigma^{-n}\omega)x\|_2}, \ Z(\omega)\right)<\sqrt{2}\epsilon\quad \text{ a.s.}.
\]
Then by \autoref{lm:delta.equiv},
\begin{align*}
\left|\dfrac{\|T_{m}(\omega)S_{n}(\sigma^{-n}\omega)x\|_2}{\|S_{n}(\sigma^{-n}\omega)x\|_2} - \|T_{m}(\omega)Z(\omega)\|_2 \right| 
&\le\dfrac{\|T_{m}(\omega)\|_2}{\sqrt{2}}\cdot\delta\left(\dfrac{S_{n}(\sigma^{-n}\omega)x}{\|S_{n}(\sigma^{-n}\omega)x\|_2}, \ Z(\omega)\right)\\
& < \epsilon\|T_{m}(\omega)\|_2\quad \text{ a.s.},
\end{align*}
and similarly,
\begin{align*}
\left|\dfrac{\|S_{m}(\omega)S_{n}(\sigma^{-n}\omega)x\|_2}{\|S_{n}(\sigma^{-n}\omega)x\|_2} - \|S_{m}(\omega)Z(\omega)\|_2 \right| 
 < \epsilon\|S_{m}(\omega)\|_2\quad \text{ a.s.}.
\end{align*}
This shows
\begin{equation}\label{eq:varphi.x.ts}
\dfrac{\|T_{m}(\omega)Z(\omega)\|_2 - \epsilon\|T_{m}(\omega)\|_2}{\|S_{m}(\omega)Z(\omega)\|_2+\epsilon\|S_{m}(\omega)\|_2}
\le \dfrac{\|T_{m}(\omega)S_{n}(\sigma^{-n}\omega)x\|_2}{\|S_{m}(\omega)S_{n}(\sigma^{-n}\omega)x\|_2}
\le \dfrac{\|T_{m}(\omega)Z(\omega)\|_2 + \epsilon\|T_{m}(\omega)\|_2}{\|S_{m}(\omega)Z(\omega)\|_2-\epsilon\|S_{m}(\omega)\|_2}\quad \text{ a.s.}.
\end{equation}
Starting with looking at the right inequality (for convenience, denote by RHS the rightmost term of \eqref{eq:varphi.x.ts}),
\begin{align*}
&\text{RHS} - \varphi_m(\omega, Z(\omega))\\
= & \dfrac{\|T_{m}(\omega)Z(\omega)\|_2 + \epsilon\|T_{m}(\omega)\|_2}{\|S_{m}(\omega)Z(\omega)\|_2-\epsilon\|S_{m}(\omega)\|_2} - \dfrac{\|T_{m}(\omega)Z(\omega)\|_2}{\|S_{m}(\omega)Z(\omega)\|_2} \\
=& \dfrac{\|T_{m}(\omega)\|_2\|S_{m}(\omega)Z(\omega)\|_2 + \|S_{m}(\omega)\|_2\|T_{m}(\omega)Z(\omega)\|_2}{\|S_{m}(\omega)Z(\omega)\|_2(\|S_{m}(\omega)Z(\omega)\|_2 - \epsilon\|S_{m}(\omega)\|_2)}\epsilon \\
=& \left(\dfrac{\|T_{m}(\omega)\|_2/\|S_{m}(\omega)\|_2}{\|S_{m}(\omega)Z(\omega)\|_2/\|S_{m}(\omega)\|_2 -\epsilon} + \dfrac{\varphi_m(\omega, Z(\omega))
}{\|S_{m}(\omega)Z(\omega)\|_2/\|S_{m}(\omega)\|_2 - \epsilon}\right)\epsilon
\end{align*}
By \autoref{prop:ts.finite}, $\|T_{m}(\omega)\|_2/m\|S_{m}(\omega)\|_2$ has a universal upper bound almost surely for $m\ge1$. By \autoref{lm:ss}, $\|S_{m}(\omega)Z(\omega)\|_2/\|S_{m}(\omega)\|_2$ has a universal lower bound almost surely for $m\ge1$. By \autoref{prop:varphi.z}, $\varphi_m(\omega, Z(\omega))/m$ has a universal upper bound almost surely for $m\ge 1$. Therefore, there exists $\Gamma(\omega)$ such that
\begin{equation}\label{eq:varphi.x.rhs}
\dfrac{\text{RHS}}{m} \le \dfrac{\varphi_m(\omega, Z(\omega))}{m} + \Gamma(\omega)\epsilon\quad \text{ a.s.}.
\end{equation}
Similarly using LHS to denote the leftmost term in \eqref{eq:varphi.x.ts}, one can similarly obtain $\Gamma'(\omega)$ such that
\begin{equation}\label{eq:varphi.x.lhs}
\dfrac{\varphi_m(\omega, Z(\omega))}{m} - \Gamma'(\omega)\epsilon \le \dfrac{\text{LHS}}{m}\quad \text{ a.s.}.
\end{equation}
Now
\begin{equation}\label{eq:varphi.x.mpn}
\dfrac{\varphi_{m+n}(\sigma^{-n}\omega, x)}{m+n} \ge \dfrac{1}{m+n}\left(\dfrac{\|T_{m}(\omega)S_{n}(\sigma^{-n}\omega)x\|_2}{\|S_{m}(\omega)S_{n}(\sigma^{-n}\omega)x\|_2} - \dfrac{\|S_{m}(\omega)T_{n}(\sigma^{-n}\omega)x\|_2}{\|S_{m}(\omega)S_{n}(\sigma^{-n}\omega)x\|_2}\right)\quad \text{ a.s.}.
\end{equation}
By \autoref{lm:ss}, there exists an integer $M>0$ (not depending on $n$) such that when $m>M$, 
\begin{equation}\label{eq:varphi.x.term2}
 \dfrac{\|S_{m}(\omega)T_{n}(\sigma^{-n}\omega)x\|_2}{\|S_{m}(\omega)S_{n}(\sigma^{-n}\omega)x\|_2} \le \dfrac{|\langle \widetilde{Z}(\omega), T_{n}(\sigma^{-n}\omega)x\rangle|}{|\langle \widetilde{Z}(\omega), S_{n}(\sigma^{-n}\omega)x \rangle|} + \epsilon\quad \text{ a.s.}.
\end{equation}
Applying \eqref{eq:varphi.x.term2} and \eqref{eq:varphi.x.ts} to \eqref{eq:varphi.x.mpn} and then using \eqref{eq:varphi.x.lhs} gives
\begin{equation}\label{eq:varphi.x.liminf}
\begin{aligned}
\dfrac{\varphi_{m+n}(\sigma^{-n}\omega, x)}{m+n} &\ge \dfrac{1}{m+n}\left(\text{LHS} - \dfrac{|\langle \widetilde{Z}(\omega), T_{n}(\sigma^{-n}\omega)x\rangle|}{|\langle \widetilde{Z}(\omega), S_{n}(\sigma^{-n}\omega)x \rangle|} - \epsilon \right) \\
& \ge \dfrac{m}{m+n}\left(\dfrac{\varphi_m(\omega, Z(\omega))}{m} - \Gamma'(\omega)\epsilon\right) - \dfrac{1}{m+n}\left(\dfrac{|\langle \widetilde{Z}(\omega), T_{n}(\sigma^{-n}\omega)x\rangle|}{|\langle \widetilde{Z}(\omega), S_{n}(\sigma^{-n}\omega)x \rangle|} + \epsilon\right)\quad \text{ a.s.}.
\end{aligned}
\end{equation}
By fixing an $n > N$, letting $m\to\infty$ in \eqref{eq:varphi.x.liminf} and applying \autoref{prop:varphi.z}, we have
\begin{equation}\label{eq:varphi.x.inf}
\liminf_{m\to\infty}\dfrac{\varphi_{m}(\sigma^{-n}\omega, x)}{m} \ge |\mathbb{E}[\xi(\omega)]| - \Gamma'(\omega)\epsilon \quad \text{ a.s.}.
\end{equation}
Similar to \eqref{eq:varphi.x.mpn}, we obtain the upper bound
\begin{equation}\nonumber
\begin{aligned}
\dfrac{\varphi_{m+n}(\sigma^{-n}\omega, x)}{m+n} 
&\le \dfrac{1}{m+n}\left(\dfrac{\|T_{m}(\omega)S_{n}(\sigma^{-n}\omega)x\|_2}{\|S_{m}(\omega)S_{n}(\sigma^{-n}\omega)x\|_2} + \dfrac{\|S_{m}(\omega)T_{n}(\sigma^{-n}\omega)x\|_2}{\|S_{m}(\omega)S_{n}(\sigma^{-n}\omega)x\|_2}\right)\quad \\
& \le \dfrac{1}{m+n}\left(\text{RHS} + \dfrac{|\langle \widetilde{Z}(\omega), T_{n}(\sigma^{-n}\omega)x\rangle|}{|\langle \widetilde{Z}(\omega), S_{n}(\sigma^{-n}\omega)x \rangle|} + \epsilon\right)  \\
& \le \dfrac{m}{m+n}\left(\dfrac{\varphi_m(\omega, Z(\omega))}{m} + \Gamma(\omega)\epsilon\right) + \dfrac{1}{m+n}\left(\dfrac{|\langle \widetilde{Z}(\omega), T_{n}(\sigma^{-n}\omega)x\rangle|}{|\langle \widetilde{Z}(\omega), S_{n}(\sigma^{-n}\omega)x \rangle|} + \epsilon\right).
\end{aligned}
\end{equation}
Similarly by fixing an $n > N$, letting $m\to\infty$ and applying \autoref{prop:varphi.z}, we have
\begin{equation}\label{eq:varphi.x.sup}
\limsup_{m\to\infty}\dfrac{\varphi_{m}(\sigma^{-n}\omega, x)}{m} \le |\mathbb{E}[\xi(\omega)]| + \Gamma(\omega)\epsilon \quad \text{ a.s.}.
\end{equation}
Now since $\epsilon>0$ is arbitrary, \eqref{eq:varphi.x.inf} and \eqref{eq:varphi.x.sup} together give
\[
\lim_{m\to\infty}\dfrac{\varphi_{m}(\sigma^{-n}\omega, x)}{m} = |\mathbb{E}[\xi(\omega)]| \quad \text{ a.s.}.
\]
Since $\sigma$ is measure-preserving, the conclusion follows.
\end{proof}

\begin{cor}\label{cor:alignment.x}
Let $\mu$ be strongly irreducible and contracting, and satisfy (FE2) and (FE4). If $\mathbb{E[\xi(\omega)]}\not= 0$, then for a given non-zero deterministic vector $x\not =0$,
\[
\lim_{n\to\infty}\delta\left(S_{n}(\omega)x,\  T_{n}(\omega)x\right) = 0.
\]
\end{cor}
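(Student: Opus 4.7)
The plan is to deduce this corollary directly from \autoref{thm:alignment} applied to the constant random vector $x(\omega)\equiv x$. Three items must be verified in order to invoke that theorem: (i) the probability measure $\mu$ satisfies (FE1)--(FE3); (ii) $\langle \widetilde{Z}(\omega),x\rangle\neq 0$ a.s.\ and $\langle \widetilde{Z}(\sigma^m\omega),y_m\rangle\neq 0$ a.s.\ for all sufficiently large $m$; and (iii) $\varphi_n(\omega,x)\to\infty$ a.s.

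For (i), condition (FE4) yields $\mathbb{E}[\ell(A_1(\omega))]<\infty$ (using the crude bound $t\le \tau^{-1}e^{\tau t}$ for $t\ge 0$), which in turn implies both (FE1) and (FE3); (FE2) is explicitly assumed. For (iii), \autoref{thm:varphi.x} asserts $\varphi_n(\omega,x)/n\to |\mathbb{E}[\xi(\omega)]|$ a.s., and the standing hypothesis $\mathbb{E}[\xi(\omega)]\neq 0$ then forces $\varphi_n(\omega,x)\to \infty$ a.s. As a consequence, \autoref{lm:wdef.y} guarantees that $y_m(\omega,x)$ is a.s.\ well-defined for every sufficiently large $m$, so that condition (ii) is meaningful.

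The substantive step is (ii). Since $\mu$ is strongly irreducible and contracting, so is $\mu^*$, and a classical consequence of strong irreducibility is that the $\mu^*$-invariant measure $\nu^*$---which by \autoref{lm:zlm}\ref{zlm:nust} is precisely the law of $\widetilde{Z}(\omega)$---charges no proper projective subspace of $\mathcal{P}(\mathbb{R}^d)$. In particular $\nu^*(v^\perp)=0$ for every non-zero $v$, and applying this to the deterministic vector $x$ immediately gives $\langle \widetilde{Z}(\omega),x\rangle\neq 0$ a.s. For the second inner product, the crucial observation is that $y_m$ is measurable with respect to $\sigma(A_1,\dots,A_m,B_1,\dots,B_m)$, whereas $\widetilde{Z}(\sigma^m\omega)$ depends only on $\{A_k\}_{k>m}$; by the i.i.d.\ hypothesis these two are independent. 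Conditioning on $y_m$ and invoking $\nu^*(y_m^\perp)=0$ a.s.\ yields $\mathbb{P}(\langle \widetilde{Z}(\sigma^m\omega),y_m\rangle=0)=0$ for each such $m$, and a countable union of null sets remains null, giving the simultaneous statement for all large $m$.

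With (i)--(iii) in hand, \autoref{thm:alignment} delivers $\delta(S_n(\omega)x,T_n(\omega)x)\to 0$ a.s., as claimed. The only step that requires anything beyond bookkeeping is the independence-plus-strong-irreducibility argument handling the random direction $y_m$ in (ii); the rest follows by a direct appeal to the two preceding theorems.
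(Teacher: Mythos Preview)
Your proof is correct and follows the same overall scheme as the paper: reduce to \autoref{thm:alignment} by checking (FE1)--(FE3), $\varphi_n\to\infty$ via \autoref{thm:varphi.x}, and the two nondegeneracy conditions on the inner products with~$\widetilde Z$.

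The one genuine difference is how you handle $\langle \widetilde Z(\sigma^m\omega),y_m\rangle\neq 0$. The paper argues dynamically: it invokes \autoref{cor:tz}\ref{tz:ttz} to see that $T_m^*(\omega)\cdot\widetilde Z(\sigma^m\omega)\to\widetilde Z(\omega)$, so that (after normalisation) $\langle T_m^*(\omega)\widetilde Z(\sigma^m\omega),x\rangle$ is eventually close to $\pm\langle\widetilde Z(\omega),x\rangle$, which is a.s.\ nonzero. Your argument instead exploits the product structure: $y_m$ is measurable in $(A_1,B_1,\dots,A_m,B_m)$ while $\widetilde Z(\sigma^m\omega)$ depends only on $(A_k)_{k>m}$, hence they are independent, and then $\nu^*(y_m^\perp)=0$ does the work. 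Your route is shorter and avoids the extra input from \autoref{cor:tz}, at the cost of leaning squarely on the i.i.d.\ hypothesis; the paper's route would survive in a merely ergodic setting where that independence is unavailable. Either way the conclusion is the same.
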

\begin{proof}
We plan to let $x(\omega)$ be a deterministic vector $x\not = 0$ in \autoref{thm:alignment}.
Notice first by \autoref{thm:varphi.x}, when $\mathbb{E}[\xi(\omega)]\not = 0$, we have $\varphi_n(n,x)\to\infty\ \text{ a.s.}$. And 
\begin{equation}\label{eq:det.x.nzero}
\mathbb{P}(\langle \widetilde{Z}(\omega), x\rangle=0) = \int \delta_{\widetilde{Z}(\omega)}(x^\perp)\mathbb{P}(\operatorname{d}\omega) = \nu^*(x^\perp) = 0,
\end{equation}
where the second equality follows from \autoref{lm:zlm} \ref{zlm:nust}, and the last follows from the fact \cite[Proposition III.2.3]{Bougerol} that when $\mu^*$ is strongly irreducible, $\nu^*$ is proper (that is, taking value $0$ on any hyperplane). Next we want to check that $\langle \widetilde{Z}(\sigma^{m}\omega), y_m\rangle\not = 0\ \text{ a.s.}$ for all $m$ sufficiently large, which is equivalent to $\langle \widetilde{Z}(\sigma^{m}\omega), T_{m}(\omega)x\rangle\not= 0$, since $T_{m}(\omega)x\not = 0$ for $m$ sufficiently large by \autoref{lm:wdef.y}. Now by \autoref{cor:tz}, for $\epsilon>0$, when $m$ is sufficiently large, we have
\[
\delta( T_{n}^*(\omega)\widetilde{Z}(\sigma^{m}\omega), \widetilde{Z}(\omega)) < \sqrt{2}\epsilon
\]
Thus by \autoref{lm:delta.equiv}
\[
|\langle \widetilde{Z}(\sigma^{m}\omega), T_{m}(\omega)x\rangle| = |\langle T_{m}^*(\omega)\widetilde{Z}(\sigma^{m}\omega), x\rangle| \ge |\langle \widetilde{Z}(\omega), x\rangle| - \epsilon\|x\|.
\]
By \eqref{eq:det.x.nzero}, $|\langle \widetilde{Z}(\omega), x\rangle|>0$ almost surely. By choosing $\epsilon>0$ sufficiently small, we can obtain $|\langle \widetilde{Z}(\sigma^{m}\omega), T_{m}(\omega)x\rangle|\not = 0\ \text{ a.s.}$. Then the conclusion follows from \autoref{thm:alignment}.
\end{proof}

\begin{cor} \label{cor:t.asymp}
Let $\mu$ be strongly irreducible and contracting, and satisfy (FE2) and (FE4). Assume $\mathbb{E[\xi(\omega)]}\not= 0$. Then
\begin{enumerate}[label=(\alph*)]
\item \label{t:range}
a limit point of $T_{n}^*(\omega)/\|T_{n}^*(\omega)\|$ is almost surely a rank one matrix, with range being the one-dimensional vector space spanned by $\widetilde{Z}(\omega)$;
\item \label{t:tz}
for a random vector $x(\omega)$, we have 
\[
\lim_{n\to\infty} \dfrac{\|T_{n}(\omega)x(\omega)\|_2}{\|T_{n}(\omega)\|_2} = |\langle \widetilde{Z}(\omega), x(\omega)\rangle|\quad \text{ a.s.};
\]
\item \label{t:ts}
\[
\lim_{n\to\infty}\dfrac{1}{n}\dfrac{\|T_{n}(\omega)\|_2}{\|S_{n}(\omega)\|_2} = |\mathbb{E}[\xi(\omega)]|\quad \text{ a.s.}.
\]
\item \label{t:norm.t}
\[
\lim_{n\to\infty}\dfrac{1}{n}\log\|T_{n}(\omega)\| = \gamma \quad \text{ a.s.}.
\]
\end{enumerate}
\end{cor}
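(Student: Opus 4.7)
The plan is to deduce all four parts from (c), which in turn follows from the basic a.s.\ limit
\[
\frac{\|T_n(\omega)x\|_2}{n\|S_n(\omega)\|_2} \;=\; \frac{\varphi_n(\omega,x)}{n}\cdot\frac{\|S_n(\omega)x\|_2}{\|S_n(\omega)\|_2} \;\longrightarrow\; |\mathbb{E}[\xi(\omega)]|\cdot|\langle \widetilde{Z}(\omega),x\rangle|,
\]
valid for every deterministic unit vector $x\in\mathbb{R}^d$ by combining \autoref{thm:varphi.x} (first factor) with \autoref{lm:ss} (second factor).

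For the upper bound in (c), I would apply the display above to the standard basis $e_1,\dots,e_d$ and use the Frobenius-type inequality $\|T_n\|_2^2 \le \sum_{i=1}^d \|T_n e_i\|_2^2$; dividing by $(n\|S_n\|_2)^2$ and taking $n\to\infty$ gives
\[
\limsup_{n\to\infty}\frac{\|T_n(\omega)\|_2}{n\|S_n(\omega)\|_2} \;\le\; |\mathbb{E}[\xi]|\Bigl(\textstyle\sum_{i=1}^d|\langle \widetilde{Z}(\omega),e_i\rangle|^2\Bigr)^{1/2} \;=\; |\mathbb{E}[\xi]| \quad \text{a.s.}
\]
For the matching lower bound, fix a countable dense set $\{x_k\}\subset\mathbb{S}^{d-1}$, intersect the countably many full-measure sets on which the display holds at $x_k$, and use $\|T_n\|_2 \ge \|T_n x_k\|_2$ to obtain
\[
\liminf_{n\to\infty}\frac{\|T_n(\omega)\|_2}{n\|S_n(\omega)\|_2} \;\ge\; |\mathbb{E}[\xi]|\sup_k|\langle \widetilde{Z}(\omega),x_k\rangle| \;=\; |\mathbb{E}[\xi]| \quad \text{a.s.},
\]
where the last equality uses density of $\{x_k\}$ in $\mathbb{S}^{d-1}$. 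This proves (c).

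The remaining parts are short consequences. Dividing the opening display by (c) yields $\|T_n(\omega)x\|_2/\|T_n(\omega)\|_2 \to |\langle\widetilde{Z}(\omega),x\rangle|$ a.s.\ for every deterministic unit $x$, and by intersecting over the countable dense set $\{x_k\}$ this holds simultaneously for all $x_k$. For (a), let $M(\omega)$ be any subsequential limit of $T_n^*(\omega)/\|T_n^*(\omega)\|_2$; then $M(\omega)^*$ is a subsequential limit of $T_n(\omega)/\|T_n(\omega)\|_2$ and satisfies $\|M(\omega)^* x_k\|_2 = |\langle\widetilde{Z}(\omega),x_k\rangle|$ for all $k$, hence for all $x$ by continuity. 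This forces $M(\omega)^*$ to vanish on $\widetilde{Z}(\omega)^\perp$ and act on $\mathrm{span}(\widetilde{Z}(\omega))$ by sending $\widetilde{z}(\omega)$ to some unit vector $u(\omega)$, so $M(\omega)^* = u(\omega)\widetilde{z}(\omega)^*$ (with $u(\omega)$ possibly depending on the subsequence), giving $M(\omega) = \widetilde{z}(\omega)u(\omega)^*$, rank one with range spanned by $\widetilde{Z}(\omega)$. Part (b) follows by the same subsequence argument applied to a random vector $x(\omega)$: each subsequential limit of $T_n(\omega)x(\omega)/\|T_n(\omega)\|_2$ has norm $|\langle\widetilde{z}(\omega),x(\omega)\rangle|\cdot\|u(\omega)\| = |\langle\widetilde{Z}(\omega),x(\omega)\rangle|$, independent of the choice of $u(\omega)$, so the full sequence converges. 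Finally, (d) is immediate from (c) by taking logarithms: $(1/n)\log\|T_n\|_2 = (1/n)\log\|S_n\|_2 + (\log n)/n + (1/n)\log(\|T_n\|_2/(n\|S_n\|_2)) \to \gamma$ a.s., since $|\mathbb{E}[\xi]|\neq 0$ makes the last term $O(1/n)$, and this extends to the unspecified norm $\|\cdot\|$ by norm equivalence. The one real technical subtlety is the uncountable quantifier on $x$ in the lower bound of (c), which the countable-density trick sidesteps without any uniform convergence in $x$.
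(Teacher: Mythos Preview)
Your proof is correct but follows a different logical order from the paper. The paper proves (a) first, using \autoref{cor:alignment.x} applied to the dual problem $(A^*,B^*)$: since $\delta(T_n^*(\omega)e_i,S_n^*(\omega)e_i)\to 0$ and $S_n^*(\omega)\cdot[e_i]\to\widetilde{Z}(\omega)$, each column of $T_n^*/\|T_n^*\|$ aligns with $\widetilde{Z}(\omega)$. Part (b) then follows by repeating the polar-decomposition argument of \autoref{lm:ss}, and (c) is obtained from the three-factor product
\[
\frac{\|T_n\|_2}{n\|S_n\|_2}=\frac{\|T_n\|_2}{\|T_nZ(\omega)\|_2}\cdot\frac{\|T_nZ(\omega)\|_2}{n\|S_nZ(\omega)\|_2}\cdot\frac{\|S_nZ(\omega)\|_2}{\|S_n\|_2},
\]
using (b), \autoref{prop:varphi.z}, and \autoref{lm:ss} respectively. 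You instead prove (c) directly from \autoref{thm:varphi.x} via a Frobenius upper bound and a countable-density lower bound, then recover (a) from the norm identity $\|M^*x\|=|\langle\widetilde{Z}(\omega),x\rangle|$ forcing $M^*=u\widetilde{z}^*$, and (b) by a subsequence argument. Your route is a little more elementary in that it never invokes the alignment result \autoref{cor:alignment.x} (nor its dual version) and does not pass through the random direction $Z(\omega)$ for (c); the paper's route is more geometric and makes the rank-one structure of (a) visible as a direct consequence of directional convergence rather than as a norm constraint. Both are clean; the paper's three-factor identity for (c) is perhaps the more memorable formula.
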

\begin{proof}
To prove \ref{t:range}, note for the standard basis $e_1, \dots, e_d$, we have
\[
\lim_{n\to\infty} \delta(T_{n}^*(\omega)e_i,\ S_{n}^*(\omega)e_i) = 0 \ (1\le i\le d) \quad \text{ a.s.}
\]
As $S_{n}^*(\omega)\cdot [e_i]$ converges in direction to $\widetilde{Z}(\omega)$ almost surely, so does $T_{n}^*(\omega)\cdot [e_i]$. Therefore $T_{n}^*(\omega)$ has the one-dimensional range spanned by $\widetilde{Z}(\omega)$ after normalisation. Next, since any limit point of $T_{n}^*(\omega)/\|T_{n}^*(\omega)\|$ is almost surely of rank one, \ref{t:tz} follows by the same proof as that of \autoref{lm:ss}. For \ref{t:ts}, we have
\begin{align*}
\lim_{n\to\infty}\dfrac{\|T_{n}(\omega)\|_2}{n\|S_{n}(\omega)\|_2} 
= \lim_{n\to\infty} \dfrac{\|T_{n}(\omega)\|_2}{\|T_{n}(\omega)Z(\omega)\|_2}\dfrac{\|T_{n}(\omega)Z(\omega)\|_2}{n\|S_{n}(\omega)Z(\omega)\|_2}\dfrac{\|S_{n}(\omega)Z(\omega)\|_2}{\|S_{n}(\omega)\|_2} = |\mathbb{E}[\xi(\omega)]|\quad \text{ a.s.},
\end{align*}
by \ref{t:tz}, \autoref{prop:varphi.z}, and \autoref{lm:ss} respectively. \ref{t:norm.t} follows immediately from \ref{t:ts}. And we have freedom to choose a different matrix norm in \ref{t:norm.t} because they only differ by a scalar constant, which does not contribute to the limit.
\end{proof}

\section{Asymptotic Behaviours of $\psi_n(\omega,x)$}
\label{sec:psi.x}

This section studies the asymptotic behaviour of $\psi_n(\omega, x)$ given a deterministic vector $x\not = 0$ using the results from the previous section. As we have proved in \autoref{cor:alignment.x} that when $\mathbb{E}[\xi(\omega)]\not =0$, $S_{n}(\omega)x$ and $T_{n}(\omega)x$ almost surely align to the same direction as $n\to\infty$, the limit of $|\psi_n(\omega, x)|/n$ is immediate. The essential part of the proof is to discuss the sign by comparing $\psi_n(\omega, x)$ with $\psi_n(\omega, Z(\omega))$. Moreover, \autoref{cor:alignment.x} only gives the alignment of $S_{n}(\omega)x$ and $T_{n}(\omega)x$ as equivalent classes in the projective space, that is, they might differ by a sign as normalised vectors. As a consequence of \autoref{thm:psi}, we will determine the sign depending on the sign of $\mathbb{E}[\xi(\omega)]$. The assumption of $\mathbb{E}[\xi(\omega)]\not= 0$ in \autoref{thm:psi} can be dropped with a linear trick, which will be discussed in \Cref{sec:drop}.

\begin{thm}\label{thm:psi}
Let $\mu$ be strongly irreducible and contracting, and satisfy (FE2) and (FE4). Assume $\mathbb{E}[\xi(\omega)]\not = 0$, then for a deterministic vector $x\not = 0$,
\[
\lim_{n\to\infty}\dfrac{\psi_n(\omega, x)}{n} = \mathbb{E}[\xi(\omega)]\quad \text{ a.s.}.
\]
\end{thm}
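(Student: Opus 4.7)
The plan is to mirror the two-step strategy of \autoref{thm:varphi.x}: first establish the theorem for $x = Z(\omega)$ (in analogy with \autoref{prop:varphi.z}) and then bootstrap to an arbitrary deterministic $x$ (in analogy with \autoref{thm:varphi.x}). A recurring convenience is that $\psi_m(\omega,\cdot)$ is invariant under sign flips of its vector argument, so one may pick representatives in $\mathcal{P}(\mathbb{R}^d)$ freely.

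For the base case $x = Z(\omega)$, I would substitute into the inductive identity of \autoref{ind.lm:psi}. Since $x_m(\omega, Z(\omega))$ is a representative of $Z(\sigma^m\omega)$ by \autoref{lm:zlm} \ref{zlm:z}, the identity reads
\begin{equation*}
\psi_{n+m}(\omega, Z(\omega)) = \frac{\langle S_n(\sigma^m\omega)\,S_m(\omega)Z(\omega),\ S_n(\sigma^m\omega)\,T_m(\omega)Z(\omega)\rangle}{\|S_n(\sigma^m\omega)\,S_m(\omega)Z(\omega)\|_2^2} + \psi_n(\sigma^m\omega, Z(\sigma^m\omega)).
\end{equation*}
By \autoref{cor:sss} (applied uniformly in $m$ exactly as in the proof of \autoref{prop:varphi.z}) the first summand converges as $n\to\infty$ to $\langle\widetilde{Z}(\sigma^m\omega), T_m(\omega)Z(\omega)\rangle/\langle\widetilde{Z}(\sigma^m\omega), S_m(\omega)Z(\omega)\rangle$, which equals $\sum_{i=1}^m \xi(\sigma^{i-1}\omega)$ by the telescoping calculation in \eqref{eq:et.xi}. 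The second summand is uniformly bounded in $m$ by $C_2(n)/C_0(n)$ thanks to \autoref{lm:st.bnd}. Fixing such an $n$, dividing by $n+m$, and invoking Birkhoff together with $\mathbb{E}|\xi|<\infty$ (\autoref{lm:fe2}) then gives $\psi_k(\omega, Z(\omega))/k \to \mathbb{E}[\xi(\omega)]$ almost surely.

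For general deterministic $x\ne 0$, under (FE4) one has $S_n(\sigma^{-n}\omega)\cdot[x]\to Z(\omega)$ almost surely, so $w_n := \overline{S_n(\sigma^{-n}\omega)x}$ satisfies $\|w_n - \chi_n z(\omega)\|_2\to 0$ for suitable signs $\chi_n\in\{\pm 1\}$. Applying \eqref{eq:t.ind} to $T_{m+n}(\sigma^{-n}\omega)x$ and unwrapping $\psi_{m+n}$ produces the decomposition
\begin{equation*}
\psi_{m+n}(\sigma^{-n}\omega, x) = \frac{\langle S_m(\omega)w_n,\ S_m(\omega)\,T_n(\sigma^{-n}\omega)x/\|S_n(\sigma^{-n}\omega)x\|_2\rangle}{\|S_m(\omega)w_n\|_2^2} + \psi_m(\omega, w_n).
\end{equation*}
For fixed $n$, \autoref{cor:sss} shows the first summand has a finite limit as $m\to\infty$, hence vanishes after division by $m+n$. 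The delicate part is the second summand: I would establish a Lipschitz-type bound $|\psi_m(\omega, y) - \psi_m(\omega, \chi z(\omega))| \le K\,m\,\|y - \chi z(\omega)\|_2$ valid for unit $y$ close to $\pm z(\omega)$, combine it with the invariance of $\psi_m$ under sign flips and the base case $\psi_m(\omega, Z(\omega))/m\to\mathbb{E}[\xi(\omega)]$, and conclude $\psi_m(\omega, w_n)/m\to\mathbb{E}[\xi(\omega)]$ by first sending $n\to\infty$ and then $m\to\infty$. This forces $\psi_{m+n}(\sigma^{-n}\omega, x)/(m+n)\to\mathbb{E}[\xi(\omega)]$, and the measure-preserving property of $\sigma$ converts it into the claimed limit.

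The main obstacle is precisely the Lipschitz estimate above. The idea is to split $\psi_m(y)-\psi_m(z)$ into numerator and denominator contributions; Cauchy--Schwarz gives $|\langle S_m y, T_m y\rangle - \langle S_m z, T_m z\rangle| \le 2\|S_m\|_2\|T_m\|_2\|y-z\|_2$ and $|\|S_m y\|_2^2 - \|S_m z\|_2^2|\le 2\|S_m\|_2^2\|y-z\|_2$. The denominator admits a uniform lower bound $\|S_m y\|_2^2\ge c\|S_m\|_2^2$ for $y$ near $\pm z(\omega)$ and $m$ large by \autoref{lm:ss}, and $\|T_m\|_2/\|S_m\|_2 = O(m)$ is supplied by \autoref{cor:t.asymp} \ref{t:ts}, which together yield the $O(m)$ scaling of the constant $K$.
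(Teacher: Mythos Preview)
Your proposal is correct and takes a genuinely different route from the paper. The paper leverages the already-established \autoref{thm:varphi.x} and \autoref{cor:alignment.x}: writing $\psi_{n+m}(\omega,x)$ via \autoref{ind.lm:psi}, the coefficient of $\varphi_m$ is shown (by \autoref{cor:sss}) to approach $\langle \widetilde{Z}(\sigma^m\omega),y_m\rangle/\langle \widetilde{Z}(\sigma^m\omega),x_m\rangle$, and the crux is then a \emph{sign determination}---proving this ratio tends to $\operatorname{sgn}\mathbb{E}[\xi(\omega)]$ via a decomposition of $\overline{T_m^*(\omega)\widetilde z(\sigma^m\omega)}$ as $\chi_m\widetilde z(\omega)+\zeta_m$ using \autoref{cor:tz}. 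Your approach sidesteps the sign question entirely: Step~1 obtains the signed limit $\mathbb{E}[\xi]$ directly from the Birkhoff average of $\xi\circ\sigma^{i-1}$, and Step~2 transfers it to general $x$ by a quantitative continuity estimate in the vector argument. What the paper's route buys is \autoref{cor:xy.conv} (the actual sign of $y_m$ relative to $x_m$) as a byproduct, which your argument does not yield. What your route buys is a more self-contained argument that avoids \autoref{cor:alignment.x} and the $\chi_m$ analysis altogether; both approaches still need the bound $\|T_m\|_2/\|S_m\|_2=O(m)$ from \autoref{prop:ts.finite} (the paper implicitly via \autoref{thm:varphi.x}, you explicitly in the Lipschitz constant). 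One clarification: your phrase ``first sending $n\to\infty$ and then $m\to\infty$'' should be read as taking $\limsup_m$ for each fixed $n$ to get an error $\le K(\omega)\|w_n-\chi_n z(\omega)\|_2$, then letting $n\to\infty$ and invoking that $\sigma$ is measure-preserving---exactly the closing move in the proof of \autoref{thm:varphi.x}.
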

\begin{proof}
By \autoref{ind.lm:psi},
\[
\psi_{n+m}(\omega, x) = \dfrac{\langle S_{n}(\sigma^{m}\omega)x_m, S_{n}(\sigma^{m}\omega)y_m \rangle}{\|S_{n}(\omega)x_m\|^2_2}\varphi_m(\omega, x) + \psi_n(\omega, x_m).
\]
First notice that by \autoref{lm:st.bnd}, there exists an integer $N_1 > 0$ such that for $n > N_1$ we have 
\[
|\psi_n(\omega, x_m)| \le \dfrac{\|T_{n}(\omega)x_m\|_2}{\|S_{n}(\omega)x_m\|_2} \le \dfrac{C_2(n)}{C_0(n)} \quad \text{ a.s.}.
\]
Next by the uniform convergence in \autoref{cor:sss} and 
\[
\mathbb{P}(\langle \widetilde{Z}(\sigma^{m}\omega), x_m\rangle=0) = \mathbb{P}(\langle \widetilde{Z}(\omega), x\rangle = 0) = \nu^*(x^\perp) = 0,
\]
we have that for $\epsilon>0$, there exists an integer $N_2>0$ (not depending on $m$) such that when $n > N_2$ we have
\[
\dfrac{\langle \widetilde{Z}(\sigma^{m}\omega), y_m \rangle}{\langle \widetilde{Z}(\sigma^{m}\omega), x_m\rangle} - \epsilon 
\le \dfrac{\langle S_{n}(\sigma^{m}\omega)x_m, S_{n}(\sigma^{m}\omega)y_m \rangle}{\|S_{n}(\omega)x_m\|^2_2} 
\le \dfrac{\langle \widetilde{Z}(\sigma^{m}\omega), y_m \rangle}{\langle \widetilde{Z}(\sigma^{m}\omega), x_m\rangle} + \epsilon\quad \text{ a.s.}.
\]
That is, for a fixed $n > \max\{N_1, N_2\}$, we have
\[
 \left(\dfrac{\langle \widetilde{Z}(\sigma^{m}\omega), y_m \rangle}{\langle \widetilde{Z}(\sigma^{m}\omega), x_m\rangle}-\epsilon\right)\dfrac{\varphi_m}{m} - \dfrac{1}{m}\dfrac{C_2(n)}{C_0(n)}
\le \dfrac{\psi_{n+m}(\omega, x)}{m} \le \left(\dfrac{\langle \widetilde{Z}(\sigma^{m}\omega), y_m \rangle}{\langle \widetilde{Z}(\sigma^{m}\omega), x_m\rangle}+\epsilon\right)\dfrac{\varphi_m}{m} + \dfrac{1}{m}\dfrac{C_2(n)}{C_0(n)}\quad \text{ a.s.}.
\]
Since $\varphi_m/m\to|\mathbb{E}[\xi(\omega)]|$ by \autoref{thm:varphi.x}, it suffices to prove that
\begin{equation}\label{eq:psi.goal}
\lim_{m\to\infty} \dfrac{\langle \widetilde{Z}(\sigma^{m}\omega), y_m \rangle}{\langle \widetilde{Z}(\sigma^{m}\omega), x_m\rangle} =
\begin{cases}
1 & \mathbb{E}[\xi(\omega)] > 0, \\
-1 & \mathbb{E}[\xi(\omega)] < 0,
\end{cases}
\end{equation}
then the conclusion would follow by taking $m\to\infty$ in the previous inequality.

By \eqref{eq:et.xi}, if $\mathbb{E}[\xi(\omega)]>0$, then for sufficiently large $m$,
\begin{equation}\label{eq:psi.pos}
\dfrac{1}{m}\dfrac{\langle \widetilde{Z}(\sigma^{m}\omega), T_{m}(\omega)Z(\omega) \rangle}{\langle \widetilde{Z}(\sigma^{m}\omega), S_{m}(\omega)Z(\omega)\rangle} 
=  \dfrac{1}{m}\dfrac{\langle T_{m}^*(\omega)\widetilde{Z}(\sigma^{m}\omega), Z(\omega) \rangle}{\langle S_{m}^*(\omega)\widetilde{Z}(\sigma^{m}\omega), Z(\omega)\rangle} 
> 0 \quad \text{a.s.}.
\end{equation}
By fixing a normalised vector $\widetilde{z}(\omega)$ as a representative of $\widetilde{Z}(\omega)\in \mathcal{P}(\mathbb{R}^d)$, choosing 
\[
\widetilde{z}(\sigma^{m}\omega):= \overline{(S_{m}^*(\omega))^{-1}\widetilde{z}(\omega)}
\] 
as a representative of $\widetilde{Z}(\sigma^{m}\omega)$, then by \autoref{cor:tz} we can write
\begin{equation}\label{eq:psi.decomp}
\dfrac{T_{m}^*(\omega)\widetilde{z}(\sigma^{m}\omega)}{\|T_{m}^*(\omega)\widetilde{z}(\sigma^{m}\omega)\|_2} = \chi_m(\omega)\widetilde{z}(\omega) + \zeta_m(\omega),
\end{equation}
where $\chi_m(\omega)$ takes values in $\{\pm 1\}$ and $\|\zeta_m(\omega)\|_2\to 0$ as $m\to\infty$.
Then
\begin{equation}\label{eq:psi.chi.zeta}
\begin{aligned}
\dfrac{1}{m}\dfrac{\langle T_{m}^*(\omega)\widetilde{z}(\sigma^{m}\omega), Z(\omega) \rangle}{\langle S_{m}^*(\omega)\widetilde{z}(\sigma^{m}\omega), Z(\omega)\rangle}
&= \dfrac{1}{m}\dfrac{\|T_{m}^*(\omega)\widetilde{z}(\sigma^m\omega)\|_2}{\|S_{m}^*(\omega)\widetilde{z}(\sigma^m\omega)\|_2}\dfrac{\langle \overline{T_{m}^*(\omega)\widetilde{z}(\sigma^m\omega)}, Z(\omega)\rangle}{\langle \widetilde{z}(\omega),Z(\omega) \rangle} \\
& \le \dfrac{1}{m}\dfrac{\|T_{m}^*(\omega)\widetilde{z}(\sigma^m\omega)\|_2}{\|S_{m}^*(\omega)\widetilde{z}(\sigma^{m}\omega)\|_2}\left(\chi_m(\omega) + \dfrac{\langle \zeta_m(\omega), Z(\omega) \rangle}{\langle \widetilde{z}(\omega), Z(\omega)\rangle}\right)\quad \text{ a.s.}.
\end{aligned}
\end{equation}
By \autoref{lm:zlm} \ref{zlm:fin.inv}, $1/ |\langle \widetilde{z}(\omega), Z(\omega)\rangle|$ is uniformly bounded for almost all $\omega$, thus
\begin{equation}\label{eq:zeta.small}
\left|\dfrac{\langle \zeta_m(\omega), Z(\omega) \rangle}{\langle \widetilde{z}(\omega), Z(\omega)\rangle}\right| \le \dfrac{\|\zeta_m(\omega)\|_2}{|\langle \widetilde{z}(\omega),Z(\omega) \rangle|} \to 0 \quad \text{ a.s.},
\end{equation}
as $m\to\infty$. Thus \eqref{eq:psi.pos} \eqref{eq:psi.chi.zeta} and \autoref{cor:varphi.tz} show that $\chi_m(\omega) > 0$ when $m$ is sufficiently large. That is, $\chi_m(\omega) = 1\ \text{ a.s.}$, when $m$ is sufficiently large. 

Now we are ready to prove \eqref{eq:psi.goal} when $\mathbb{E}[\xi(\omega)]>0$. By \autoref{cor:alignment.x} we have 
\[
\lim_{m\to\infty}\delta(x_m, y_m) = 0,
\]
thus
\[
\lim_{m\to\infty} \left|\dfrac{\langle \widetilde{Z}(\sigma^{m}\omega), y_m \rangle}{\langle \widetilde{Z}(\sigma^{m}\omega), x_m\rangle}\right| = 1.
\]
It suffices to prove that, for sufficiently large $m$, we have
\[
\dfrac{\langle \widetilde{Z}(\sigma^{m}\omega), T_{m}(\omega)x \rangle}{\langle \widetilde{Z}(\sigma^{m}\omega), S_{m}(\omega)x\rangle} > 0 \quad \text{ a.s.}.
\]
Now by \eqref{eq:psi.decomp},
\begin{align*}
\dfrac{\langle \widetilde{Z}(\sigma^{m}\omega), T_{m}(\omega)x \rangle}{\langle \widetilde{Z}(\sigma^{m}\omega), S_{m}(\omega)x\rangle} 
&= \dfrac{\langle T_{m}^*(\omega)\widetilde{z}(\sigma^{m}\omega), x \rangle}{\langle S_{m}^*(\omega)\widetilde{z}(\sigma^{m}\omega), x\rangle} \\
& = \dfrac{\|T_{m}^*(\omega)\widetilde{z}(\sigma^m\omega)\|}{\|S_{m}^*(\omega)\widetilde{z}(\sigma^m\omega)\|}\dfrac{\langle \chi_m(\omega)\widetilde{z}(\omega)+\zeta_m(\omega), Z(\omega)\rangle}{\langle \widetilde{z}(\omega), Z(\omega)\rangle}\\
& = \dfrac{\|T_{m}^*(\omega)\widetilde{z}(\sigma^m\omega)\|}{\|S_{m}^*(\omega)\widetilde{z}(\sigma^m\omega)\|}\left(\chi_m(\omega) + \dfrac{\langle \zeta_m(\omega), Z(\omega) \rangle}{\langle \widetilde{z}(\omega), Z(\omega) \rangle}\right)\quad \text{ a.s.}.
\end{align*}
And this is positive for sufficiently large $m$ by the conclusion $\chi_m(\omega) = 1\ \text{ a.s.}$ and \eqref{eq:zeta.small}. The proof of \eqref{eq:psi.goal} for the case $\mathbb{E}[\xi(\omega)]<0$ follows in the same way by showing $\chi_m(\omega) = -1\ \text{ a.s.}$ when $m$ is sufficiently large.
\end{proof}

The following corollary follows from \eqref{eq:psi.goal} in the proof above.
\begin{cor}\label{cor:xy.conv}
Let $\mu$ be strong irreducible and contracting, and satisfy (FE2) and (FE4). Define $\{x_n\}_n, \{y_n\}_n$ by \eqref{defn:x} given a deterministic vector $x\not = 0$. Then
\begin{enumerate}[label=(\alph*)]
\item if $\mathbb{E}[\xi(\omega)]>0$, then $\lim_{n\to\infty} \|x_n- y_n\|_2 = 0$;
\item if $\mathbb{E}[\xi(\omega)]<0$, then $\lim_{n\to\infty} \|x_n+ y_n\|_2 = 0$.
\end{enumerate}
\end{cor}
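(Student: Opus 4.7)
The plan is to reduce the statement to an algebraic identity that expresses $\langle x_m, y_m\rangle$ in terms of $\psi_m$ and $\varphi_m$, whose limits have already been determined by \autoref{thm:varphi.x} and \autoref{thm:psi}. Writing $S_m(\omega)x = \|S_m(\omega)x\|_2\, x_m$ and $T_m(\omega)x = \|T_m(\omega)x\|_2\, y_m$ and substituting into \eqref{defn:psi} gives
$$\psi_m(\omega, x) \;=\; \frac{\|T_m(\omega)x\|_2}{\|S_m(\omega)x\|_2}\,\langle x_m, y_m\rangle \;=\; \varphi_m(\omega, x)\,\langle x_m, y_m\rangle.$$

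First I would invoke \autoref{thm:varphi.x} to obtain $\varphi_m/m \to |\mathbb{E}[\xi(\omega)]|$ a.s., and \autoref{thm:psi} to obtain $\psi_m/m \to \mathbb{E}[\xi(\omega)]$ a.s. Since $\mathbb{E}[\xi(\omega)] \neq 0$ by hypothesis, the limit of $\varphi_m/m$ is strictly positive, so $\varphi_m > 0$ for all sufficiently large $m$ almost surely (which is also guaranteed by \autoref{lm:wdef.y}, making $y_m$ well-defined eventually). Dividing the two limits then yields
$$\langle x_m, y_m\rangle \;=\; \frac{\psi_m(\omega,x)/m}{\varphi_m(\omega,x)/m} \;\longrightarrow\; \frac{\mathbb{E}[\xi(\omega)]}{|\mathbb{E}[\xi(\omega)]|} \;=\; \operatorname{sgn}\bigl(\mathbb{E}[\xi(\omega)]\bigr) \quad \text{a.s.}$$

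Finally, since $x_m, y_m \in \mathbb{S}^{d-1}$, the elementary polar identities $\|x_m \mp y_m\|_2^2 = 2 \mp 2\langle x_m, y_m\rangle$ immediately translate this into the desired conclusions: $\|x_m - y_m\|_2 \to 0$ when $\mathbb{E}[\xi(\omega)] > 0$, and $\|x_m + y_m\|_2 \to 0$ when $\mathbb{E}[\xi(\omega)] < 0$.

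I do not anticipate any serious obstacle; the heavy lifting has already been done in \autoref{thm:varphi.x} and \autoref{thm:psi}, and the corollary is essentially the mechanical consequence of taking their ratio. The author's hint that the result follows from \eqref{eq:psi.goal} suggests an alternative route through the projections $\langle \widetilde{Z}(\sigma^m\omega), x_m\rangle$ and $\langle \widetilde{Z}(\sigma^m\omega), y_m\rangle$; that approach would also work but would require some care to control the possibility that $|\langle \widetilde{Z}(\sigma^m\omega), x_m\rangle|$ fails to stay bounded away from zero, a complication entirely sidestepped by the $\psi_m/\varphi_m$ route proposed here.
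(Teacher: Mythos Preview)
Your argument is correct and actually cleaner than the paper's. The paper derives the corollary from the intermediate equation \eqref{eq:psi.goal} established inside the proof of \autoref{thm:psi}, combined with \autoref{cor:alignment.x}: first $\delta(x_m,y_m)\to 0$ gives $y_m=\chi_m x_m+o(1)$ with $\chi_m\in\{\pm1\}$, and then the sign of the ratio $\langle\widetilde Z(\sigma^m\omega),y_m\rangle/\langle\widetilde Z(\sigma^m\omega),x_m\rangle$ pins down $\chi_m$. Your route instead treats \autoref{thm:varphi.x} and \autoref{thm:psi} as black boxes and reads off $\langle x_m,y_m\rangle=\psi_m/\varphi_m\to\operatorname{sgn}\mathbb{E}[\xi]$ directly from the elementary identity $\psi_m=\varphi_m\langle x_m,y_m\rangle$. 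This bypasses any need to control the projections $\langle\widetilde Z(\sigma^m\omega),x_m\rangle$ and does not separately invoke \autoref{cor:alignment.x}; the alignment and the sign come out in one stroke. The trade-off is that you rely on the full strength of \autoref{thm:psi}, whereas the paper's phrasing emphasises that only \eqref{eq:psi.goal} is really needed.
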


\section{Dropping the condition $\mathbb{E}[\xi(\omega)]\not = 0$}
\label{sec:drop}

In this section, we generalise various results in \Cref{sec:varphi.x} and \Cref{sec:psi.x} by dropping the assumption $\mathbb{E}[\xi(\omega)]\not = 0$ with a simple linear trick. In particular, we will generalise \autoref{prop:ts.finite}, \autoref{thm:varphi.x}, \autoref{cor:t.asymp} \ref{t:ts} and \autoref{thm:psi}. We first remark that an informal way to see if this condition can be dropped for a certain result is by checking if $T_{n}(\omega)=0$ would make the statement ridiculously wrong, because $\mathbb{E}[\xi(\omega)]=0$ might allow $T_{n}(\omega)$ to visit $0$ infinitely often (see \Cref{sec:exmp} \autoref{exmp2}). For example, in \autoref{cor:alignment.x}, if $T_{n}(\omega)=0$, then $\delta( S_{n}(\omega)x,T_{n}(\omega)x)$ does not make sense; in \autoref{cor:t.asymp} \ref{t:range} \ref{t:tz}, $\|T_{n}(\omega)\|$ show up in the denominator; and in \autoref{cor:t.asymp} \ref{t:norm.t}, $T_{n}(\omega)=0$ gives $\log\|T_{n}(\omega)\|=-\infty$. This is basically the only obstruction to dropping the non-zero expectation condition, at least in the scope of this paper.

Let $T_{n}(\omega)=\mathfrak{D}_n(\omega, A, B)$ be the first derived product associated to $(A, B)$, and $T_{n}'(\omega)= \mathfrak{D}_n(\omega, A, B')$, where $B(\omega), B'(\omega)$ are two $d\times d$ random matrices. Then by \eqref{defn:t}
\[
T_{n}(\omega) + T_{n}'(\omega) = (B_n(\omega) + B_n'(\omega))S_{n-1}(\omega) + A_n(\omega)(T_{n-1}(\omega) + T_{n-1}'(\omega)).
\]
That is,
\begin{equation}\label{eq:add.D}
\mathfrak{D}_n(\omega, A, B) + \mathfrak{D}_n(\omega, A, B') = \mathfrak{D}_n(\omega, A, B+B').
\end{equation}
Moreover, when we take $B = \alpha A$ for some $\alpha\in \mathbb{R}$, we have
\begin{equation}\label{eq:D.alpha}
\mathfrak{D}_n(\omega, A, \alpha A) = n\alpha S_{n}(\omega).
\end{equation}
Write
\begin{align*}
\varphi_n(\omega, x, B) & := \dfrac{\| \mathfrak{D}_n(\omega, A, B)x\|_2}{\|S_{n}(\omega)x\|_2}, \\
\psi_n(\omega, x, B) & := \dfrac{\langle S_{n}(\omega)x, \mathfrak{D}_n(\omega, A, B)x\rangle}{\|S_{n}(\omega)\|_2^2},\\
\xi(\omega, B) & := \dfrac{\langle \widetilde{Z}(\sigma\omega), B(\omega)Z(\omega)\rangle}{\langle \widetilde{Z}(\sigma\omega), A(\omega)Z(\omega)\rangle}
\end{align*}
\textbf{Generalising \autoref{thm:psi}.} By \eqref{eq:add.D},
\[
\psi_n(\omega, x, B+B') = \psi_n(\omega, x, B) + \psi_n(\omega, x, B').
\]
In particular, by \eqref{eq:D.alpha}
\begin{equation}\label{eq:add.psi.n}
\psi_n(\omega, x, B + \alpha A) = \psi_n(\omega, x, B) + n\alpha.
\end{equation}
Also it is very easy to see
\[
\xi(\omega, B+B') =\xi(\omega, B) + \xi(\omega, B'), \quad \xi(\omega, \alpha A) = \alpha.
\]
Now if $\mathbb{E}[\xi(\omega, B)] = 0$, then choose $\alpha\not = 0$ such that $\mathbb{E}[\xi(\omega, B + \alpha A)] = \alpha \not = 0.$ By \autoref{thm:psi}, we have
\[
\lim_{n\to\infty}\dfrac{\psi_n(\omega, x, B+\alpha A)}{n} = \mathbb{E}[\xi(\omega, B + \alpha A)] = \alpha\quad \text{ a.s.}.
\]
Using \eqref{eq:add.psi.n} to rewrite the left-hand side, we have
\[
\lim_{n\to\infty}\dfrac{\psi_n(\omega, x, B)}{n} = 0 = \mathbb{E}[\xi(\omega)]\quad \text{ a.s.}.
\]
This gives the generalisation of \autoref{thm:psi}.
\\
\textbf{Generalising \autoref{thm:varphi.x}.} 
If $\mathbb{E}[\xi(\omega, B)] = 0$, then choose $\alpha > 0$ such that 
\[
|\mathbb{E}[\xi(\omega, B \pm \alpha A)]| = \alpha\not = 0.
\]
Moreover, by \eqref{eq:add.D}
\[
2\varphi_n(\omega, x, B) \le \varphi_n(\omega, x, B+\alpha A) + \varphi_n(\omega, x, B - \alpha A).
\]
Dividing both sides by $n$ and letting $n\to\infty$ using \autoref{thm:varphi.x} gives
\[
\limsup_{n\to\infty}\dfrac{\varphi_n(\omega, x, B)}{n}\le \alpha\quad \text{ a.s.}.
\]
As $\alpha>0$ is arbitrary, we have
\[
\lim_{n\to\infty}\dfrac{\varphi_n(\omega, x, B)}{n} = 0 = |\mathbb{E}[\xi(\omega)]|.
\]
\autoref{cor:t.asymp} \ref{t:ts} could be generalised in the same way. And the generalised \autoref{prop:ts.finite} follows from the generalised \autoref{cor:t.asymp} \ref{t:ts}.

\section{Examples}
\label{sec:exmp}
In this section, we give several relevant examples. \autoref{exmp1} is a trivial example of commutative scalar products. \autoref{exmp2} gives an example of $T_{n}(\omega)$ visiting $0$ infinitely often almost surely. \autoref{exmp:no.irred} and \autoref{exmp:no.contra} demonstrate that our theory does not work if discarding strong irreducibility or contracting property.

\begin{exmp}\label{exmp1}
The most trivial example is when $d=1$ and random matrices reduce to random scalars. Assume $\{(a_i(\omega), b_i(\omega))\}_{i\ge 1}$ is an i.i.d. sequence of scalar pairs following a probability measure $\mu$ and $a_i(\omega)\not = 0$. Now
\begin{align*}
S_{n}(\omega) & = a_n(\omega)\cdots a_1(\omega),\\
T_{n}(\omega) &= \left(\sum_{i=1}^n b_i(\omega)/a_i(\omega)\right) S_n(\omega).
\end{align*}
In this case,
\begin{align*}
\dfrac{\varphi_n}{n}&=\dfrac{|T_{n}(\omega)|}{n|S_{n}(\omega)|} =\dfrac{1}{n}\left|\sum_{i=1}^n\dfrac{b_i(\omega)}{a_i(\omega)}\right|\to \left|\mathbb{E}\left[\dfrac{b_1(\omega)}{a_1(\omega)}\right]\right|\ (n\to\infty) \  \text{ a.s.} \\
\dfrac{\psi_n}{n}&=\dfrac{S_{n}(\omega)T_{n}(\omega)}{n|S_{n}(\omega)|^2} =\dfrac{1}{n}\sum_{i=1}^n\dfrac{b_i(\omega)}{a_i(\omega)} \to \mathbb{E}\left[\dfrac{b_1(\omega)}{a_1(\omega)}\right]\ (n\to\infty) \  \text{ a.s.} 
\end{align*}
\end{exmp}

\begin{exmp}\label{exmp2}
Let $\mu$ be the probability measure on pairs of matrices under which there is a probability of $1/2$ to choose $(P, P)$ and a probability of $1/2$ to choose $(Q, -Q)$. Assume the matrices $\{P, Q\}$ are invertible and form a strongly irreducible and contracting set.

In this example, for a product $S_{n}(\omega)$ consisting of $\eta_P(\omega)$ number of $P$'s and $\eta_Q(\omega)$ of $Q$'s, then $T_{n}(\omega) = (\eta_P(\omega) - \eta_Q(\omega))S_{n}(\omega)$, where $\eta_P(\omega)+\eta_Q(\omega) = n$. Now for a given vector $x \not = 0$,
\[
\psi_n(\omega, x)=\dfrac{\langle S_{n}(\omega)x, T_{n}(\omega)x\rangle}{\|S_{n}(\omega)x\|_2^2} = \eta_P(\omega) - \eta_Q(\omega).
\]
That is, $\psi(\omega, x)$ is a simple random walk on $\mathbb{Z}$. Therefore we know by the basic theory of random walks that $\mathbb{P}(T_{n}(\omega)=0 \ \text{i.o.})=\mathbb{P}(\psi_n(\omega, x)=0\ \text{i.o.}) = 1$. Also $\psi_n(\omega, x)/n\to 0$ almost surely as $n\to\infty$. Meanwhile,
\[
\mathbb{E}[\xi(\omega)] = \dfrac{1}{2}\int\dfrac{\langle \widetilde{Z}(\sigma\omega), PZ(\omega) \rangle}{\langle \widetilde{Z}(\sigma\omega), PZ(\omega)\rangle}\mathbb{P}(\operatorname{d}\omega) + \dfrac{1}{2}\int\dfrac{\langle \widetilde{Z}(\sigma\omega), -QZ(\omega) \rangle}{\langle \widetilde{Z}(\sigma\omega), QZ(\omega)\rangle}\mathbb{P}(\operatorname{d}\omega) = 0.
\]
\end{exmp}

\begin{exmp}\label{exmp:no.irred}
Here is an example with contracting property but without strong irreducibility. Let $A_n(\omega)$ and $B_n(\omega)$ be deterministic matrices given by 
\[
A_n(\omega)=\begin{pmatrix}
\alpha & 0\\
0 & \beta
\end{pmatrix},\quad 
B_n(\omega)= R_{\pi/2} = \begin{pmatrix}
0 & -1 \\
1 & 0
\end{pmatrix}
\]
where $\alpha>\beta>0$. 
Then $S_{n+1}(\omega)= \operatorname{diag}(\alpha^{n+1}, \beta^{n+1})$, while
\begin{align*}
T_{n+1}(\omega) = \begin{pmatrix}
0 & -\sum_{i=0}^n \alpha^{n-i}\beta^i \\
\sum_{i=0}^n \beta^{n-i}\alpha^i & 0
\end{pmatrix}.
\end{align*}
Then for the standard basis $e_1, e_2$ of $\mathbb{R}^2$, we have
\begin{align*}
\dfrac{1}{n}\dfrac{\|T_{n+1}(\omega)e_1\|_2}{\|S_{n+1}(\omega)e_1\|_2} &= \dfrac{1}{n\alpha}\sum_{i=0}^n \left(\dfrac{\beta}{\alpha}\right)^i \to 0, \\
\dfrac{1}{n}\dfrac{\|T_{n+1}(\omega)e_2\|_2}{\|S_{n+1}(\omega)e_2\|_2} &= \dfrac{1}{n\beta}\sum_{i=0}^n \left(\dfrac{\alpha}{\beta}\right)^i \to \infty,
\end{align*}
as $n\to\infty$.
\end{exmp}

\begin{exmp}\label{exmp:no.contra}
This is an example with strong irreducibility but without contracting property. Let $A_n(\omega)$ and $B_n(\omega)$ be deterministic matrices given by
\[
A_n(\omega) = R_\theta = \begin{pmatrix}
\cos\theta & -\sin\theta \\
\sin\theta & \cos\theta
\end{pmatrix}, \quad
B_n(\omega) = I_2= \begin{pmatrix}
1 & 0 \\
0 & 1
\end{pmatrix},
\]
where $\theta/\pi$ is irrational. Denote by $x_\alpha : = (\cos\alpha) e_1 + (\sin\alpha) e_2$ the normalised vector with angle $\alpha$, where $e_1, e_2$ are standard basis for $\mathbb{R}^2$. Fix an $\alpha$, then it is easy to observe that since $S_{n}(\omega)=R_{n\theta}$ and $T_{n}(\omega) = nR_{(n-1)\theta}$, $S_{n}(\omega)x_\alpha$ and $T_{n}(\omega)x_\alpha$ always differ by an angle of $\theta$ in direction. Thus $S_{n}(\omega)x_\alpha$ and $T_{n}(\omega)x_\alpha$ do not align as $n\to\infty$ in this case.

Moreover, for all $n\ge 1$, since a rotation matrix is orthogonal,
\[
\dfrac{\varphi_n(\omega, x_\alpha)}{n} = \dfrac{1}{n}\dfrac{\|T_{n}(\omega)x_\alpha\|_2}{\|S_{n}(\omega)x_\alpha\|_2} = 1.
\]
And since
\[
\langle S_{n}(\omega)x_\alpha, S_{n-i}(\sigma^{i}\omega)I_2S_{i-1}(\omega)x_\alpha\rangle = \langle x_{\alpha+i\theta}, x_{\alpha+(i-1)\theta} \rangle = \cos\theta.
\]
This shows
\[
\dfrac{\psi_{n}(\omega, x_\alpha)}{n} = \dfrac{1}{n}\sum_{i=1}^n \cos\theta = \cos\theta.
\]
As $\theta$ is an arbitrary irrational multiple of $\pi$, it is impossible to have $1 = |\cos \theta|$. This is not surprising because $\varphi_n$ ignores the angle difference between $S_{n}(\omega)x_\alpha$ and $T_{n}(\omega)x_\alpha$ while $\psi_n$ preserves it.
\end{exmp}

\bibliographystyle{plain}
\bibliography{random_matrices,operator_algebras}
\end{document}